\newtheorem{theorem}{Theorem}[section]
\newtheorem{corollary}[theorem]{Corollary}
\newtheorem{prop}[theorem]{Proposition}
\newtheorem{lemma}[theorem]{Lemma}
\newtheorem{obs}[theorem]{Observation}
\theoremstyle{definition}
\newtheorem{definition}[theorem]{Definition}
\newtheorem{remark}[theorem]{Remark}
\newtheorem{fact}[theorem]{Fact}
\newtheorem{notation}[theorem]{Notation}
\newtheorem{convention}[theorem]{Convention}
\newtheorem{conjecture}[theorem]{Conjecture}
\DeclareMathOperator{\aut}{Aut}
\DeclareMathOperator{\out}{Out}
\DeclareMathOperator{\inn}{Inn}
\DeclareMathOperator{\sym}{Sym}
\DeclareMathOperator{\alt}{Alt}
\DeclareMathOperator{\diag}{diag}
\DeclareMathOperator{\agl}{AGL}
\DeclareMathOperator{\gl}{AGL}
\DeclareMathOperator{\soc}{Soc}
\DeclareMathOperator{\supp}{supp}
\DeclareMathOperator{\asy}{asy}
\DeclareMathOperator{\nextt}{next}
\DeclareMathOperator{\solv}{solv}
\newcommand{\mn}{\medskip\noindent}
\newcommand{\normal}{\,\triangleleft\,}
\newcommand{\capp}{\,\cap\,}
\newcommand{\ie}{i.\,e.}
\newcommand{\eg}{e.\,g.}
\newcommand{\wrt}{w.\,r.\,t.\ } 
\newcommand{\vf}{\varphi}
\newcommand{\vfbar}{\overline{\varphi}}
\newcommand{\Deltabar}{\overline{\Delta}}
\newcommand{\nnn}{\mathbb N}
\newcommand{\zzz}{\mathbb Z}
\newcommand{\ccc}{\mathbb C}
\newcommand{\fff}{\mathbb F}
\newcommand{\calB}{\mathcal B}
\newcommand{\calD}{\mathcal D}
\newcommand{\calG}{\mathcal G}
\newcommand{\calH}{\mathcal H}
\newcommand{\calL}{\mathcal L}
\newcommand{\calQ}{\mathcal Q}
\newcommand{\calR}{\mathcal R}
\newcommand{\mfX}{\mathfrak X}
\newcommand{\inff}{^{(\infty)}}
\newcommand{\wt}{\widetilde}
\newcommand{\acts}{\curvearrowright}
\newcommand{\onto}{\twoheadrightarrow}
\newcommand{\semidirect}{\rtimes}
\def\beq{\begin{equation}}
\def\eeq{\end{equation}}
\def\bwhile{{\bf while}}
\def\bend{{\bf end}}
\def\breturn{{\bf return}}
\begin{document}

\title[Asymmetric coloring of locally finite graphs]%
      {Asymmetric coloring of locally finite graphs \\
        and profinite permutation groups:\\
        Tucker's Conjecture confirmed}
\author{L\'aszl\'o Babai}
\address{University of Chicago}

\dedicatory{To the memory of Jan Saxl}

\date{November 13, 2021}

% Updated: November 7, 2021.
% Revised: August 21, 2021.  Original: April 19, 2021}
\maketitle

\begin{abstract}
An \emph{asymmetric coloring} of a graph is a coloring of
its vertices that is not preserved by any non-identity
automorphism of the graph.  The \emph{motion} of a graph
is the minimal degree of its automorphism group, \ie,
the minimum number of elements that are moved (not fixed)
by any non-identity automorphism.  We confirm
Tom Tucker's ``Infinite Motion Conjecture'' that 
\emph{connected locally finite graphs with
infinite motion admit an asymmetric 2-coloring.}
We infer this from the more general result that the
\emph{inverse limit of an infinite sequence of finite permutation
groups with disjoint domains, viewed as a permutation
group on the union of those domains, admits an asymmetric
2-coloring.}
The proof is based on the study of the interaction
between epimorphisms of finite permutation groups and
the structure of the setwise stabilizers of subsets of 
their domains.
\end{abstract}

\section{Introduction}
A graph is \emph{locally finite} if every vertex has finite degree.
A graph is \emph{asymmetric} if it has no nontrivial
automorphisms.  A \emph{coloring} of the vertices of a graph
is asymmetric if no non-identity automorphisms of the graph
preserves the coloring.  This author established in 1977
that every regular tree of (finite or infinite) degree $\kappa \ge 2$
admits an \emph{asymmetric 2-coloring}~\cite{trees}.  While this
result is trivial for locally finite (and therefore countable)
trees (the point of~\cite{trees} was that it holds for arbitrary
infinite cardinals $\kappa$, and the difficulty begins at
strongly inaccessible cardinals), several interesting questions
about asymmetric colorings of locally finite graphs have
been asked and partly answered (see, \eg,
\cite{wong,trofimov,cuno14,watkins15,lehner16,lehner17,imrich-tucker17,
cuno14,pilsniak18,pilsniak19}).
A particularly intriguing
conjecture was formulated by Thomas W. Tucker in 2011~\cite{tucker}.
The \emph{degree} of a permutation is the number of elements it
moves.  The \emph{minimal degree} of a permutation group is the
minimum of the degrees if its non-identity elements.
The \emph{motion} of a graph is the minimal degree of its
automorphism group.  Tucker's \emph{Infinite Motion Conjecture}
states that \emph{every connected, locally finite graph with infinite
motion admits an asymmetric 2-coloring.}  A number of recent papers
obtained partial result on this conjecture, 
including~\cite{trofimov,cuno14,imrich-tucker17}.
Notably, Florian Lehner confirmed
the conjecture for graphs with intermediate ($\exp(O(\sqrt{n})$)
growth~\cite{lehner16}.  Lehner, Monika Pil{\'s}niak, and 
Marcin Stawiski confirmed the conjecture for graphs with 
maximum degree $\le 5$~\cite{pilsniak18}.
The same authors proved that if the maximum degree of the 
graph is $k$ then an asymmetric coloring with $O(\sqrt{k}\log k)$
colors exists~\cite{pilsniak19}.  

The main result of our paper confirms Tucker's conjecture
in full generality.  Along the way, we raise and partially
solve a number of questions regarding group-theoretic 
properties of colorings for finite permutation groups.

\begin{theorem}[Tucker's Infinite Motion Conjecture confirmed] 
\label{thm:main}
Let $X$ be a locally finite connected graph with infinite motion.
Then $X$ admits an asymmetric 2-coloring. 
\end{theorem}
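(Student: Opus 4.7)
Following the strategy announced in the abstract, the plan is to deduce Theorem~\ref{thm:main} from the main technical result of the paper: that every inverse limit of an infinite sequence of finite permutation groups on disjoint finite domains, viewed as a permutation group on the union of those domains, admits an asymmetric $2$-coloring. The argument thus splits into two parts: a reduction from graphs to the inverse-limit setting, and the construction of the coloring for the inverse limit.

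\medskip

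\emph{Reduction.} Let $G = \aut(X)$, regarded as a closed subgroup of $\sym(V(X))$ in the topology of pointwise convergence. I would seek a partition $V(X) = \bigsqcup_n D_n$ into finite sets, each setwise preserved by $G$ (possibly after passing to a closed overgroup), so that $G$ is realised as a subgroup of an inverse limit $\varprojlim H_n$ with $H_n \leq \sym(D_1 \sqcup \cdots \sqcup D_n)$. When every $G$-orbit on $V(X)$ is finite (\eg\ when some vertex is fixed), one may take the $D_n$ to be the orbits. When some orbit is infinite, as already for the bi-infinite path, the reduction is more delicate and must exploit the graph structure of $X$ together with the infinite-motion hypothesis in order to enlarge $G$ to a closed supergroup that admits the desired inverse-limit decomposition on $V(X)$. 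Once such a realisation is in hand, an asymmetric $2$-coloring for the ambient inverse-limit group restricts to one for $G$, since every non-identity element of $G$ is then a non-identity element of that ambient group.

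\medskip

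\emph{The inverse-limit statement.} Set $E_n = D_1 \sqcup \cdots \sqcup D_n$ and let $K_n = \ker(H_n \onto H_{n-1})$ be the subgroup of $H_n$ fixing $E_{n-1}$ pointwise. I would build $c : \bigsqcup_n D_n \to \{0,1\}$ by induction on $n$, maintaining the invariant that $c|_{E_n}$ is asymmetric for $H_n$. Via the short exact sequence $1 \to K_n \to H_n \to H_{n-1} \to 1$, any non-identity element of $H_n$ either maps to a non-identity element of $H_{n-1}$, and is defeated inductively by $c|_{E_{n-1}}$, or lies in $K_n$ and so must be defeated by the chosen extension on $D_n$. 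The principal obstruction is that $K_n$, viewed as a finite permutation group on $D_n$, may be too rich to admit any asymmetric $2$-coloring on its own (\eg\ if $K_n \supseteq \sym(D_n)$). The escape route is that infinite motion on the inverse limit excludes those elements of $K_n$ whose lift to $\varprojlim H_m$ would have finite support, effectively thinning the portion of $K_n$ one must defeat; the key combinatorial lemma, which the abstract flags as the ``interaction between epimorphisms of finite permutation groups and setwise stabilizers,'' converts the global infinite-motion hypothesis into usable constraints at each finite stage.

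\medskip

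\emph{Main obstacle.} The crux is the inductive extension: one must choose $c|_{D_n}$ to defeat every requisite element of $K_n$ while leaving enough freedom for the same extension to be made at every subsequent stage. I expect the technical heart of the paper to be a robust extension lemma along the lines of: if a tower $H_1 \twoheadleftarrow H_2 \twoheadleftarrow \cdots$ of finite permutation groups on a partition of a countable set has inverse limit with infinite motion, then there is a coherent sequence of asymmetric $2$-colorings $c_n$ of $E_n$ with $c_{n+1}|_{E_n} = c_n$. Establishing this via a careful analysis of the structure of the $K_n$---their orbit partitions, their primitive/imprimitive pieces, and the subset of ``lifts with small support'' ruled out by infinite motion---is where I would expect the bulk of the effort to lie.
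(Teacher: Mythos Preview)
Your reduction sketch is in the right spirit but misses the concrete mechanism the paper uses. The paper fixes a root $x_0$, takes the \emph{spheres} $S_i(x_0)$ as the disjoint domains, and uses the infinite-motion hypothesis precisely once, to show that the restriction epimorphism from the ball $B_i$ to the sphere $S_i$ is an \emph{isomorphism} (Lemma~\ref{lem:invertible}). This is what converts the natural inverse system on balls (nested, not disjoint) into an inverse system on spheres (disjoint). The rooting issue---that $\aut(X)$ need not fix $x_0$---is handled by a separate combinatorial device (``special subsets,'' Lemma~\ref{lem:root}), not by passing to an overgroup.

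The real gap is in your plan for the inverse limit. Your proposed invariant, that $c|_{E_n}$ be asymmetric for $H_n$, is unattainable: the kernel $K_n$ acting on $D_n$ can be the full symmetric group $\sym(D_n)$, which admits no asymmetric $2$-coloring, and your ``thinning by infinite motion'' does not help. In the disjoint-domain setting every nontrivial strand already has infinite support (if $g_i\neq 1$ then $g_j\neq 1$ for all $j\ge i$), so infinite motion imposes no constraint on $K_n$ beyond what disjointness already gives; $K_n$ can still be all of $\sym(D_n)$. The paper's approach is entirely different: it never tries to kill $G_i$ using $\Omega_i$ alone. Instead it spends \emph{infinitely many later domains} $\Omega_j$ ($j>i$) to reduce $G_i$ step by step through a chain of HS-closed classes
\[
\text{all finite groups}\supset\text{solvable}\supset\text{derived length}\le k_0\supset\cdots\supset\text{trivial},
\]
via the ``$2$-color-reducibility'' machinery of Lemma~\ref{lem:reduction} and Theorem~\ref{thm:program}. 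The hard step---reducing a nonsolvable $G_i$ to a proper subgroup by $2$-coloring some $\Omega_j$---is Theorem~\ref{thm:simple} (reducing a simple image), and it is exactly the ``interaction between epimorphisms and setwise stabilizers'' the abstract advertises; it is not about thinning kernels but about finding $\Delta\subseteq\Omega_j$ with $\varphi(G_\Delta)<T$ for a given epimorphism $\varphi:G\onto T$ onto a nonabelian simple group.
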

In other words, the conclusion says that
the set of vertices has a subset,
not fixed setwise by any non-identity automorphism of $X$.

Given the fact that the automorphism group of a
connected locally finite \emph{rooted} graph is the
inverse limit of a sequence of finite permutation groups,
the proof boils down to the study of inverse systems of
epimorphisms among a sequence of finite permutation
groups.  The motion condition translates to
the \emph{disjointness} of the domains of the groups in the system.
Our result in this context is the following;
this is the main technical result of the paper.
\begin{theorem}[asymmetric coloring of inverse limit]  \label{thm:main1}
Let $\calG$ be the inverse limit of an
infinite sequence of finite
permutation groups with disjoint domains, viewed as
a permutation group acting on the union of those domains.
Then $\calG$ admits an asymmetric 2-coloring.
\end{theorem}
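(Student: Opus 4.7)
The plan is to construct the $2$-coloring $c$ of $\Omega := \bigsqcup_n \Omega_n$ in stages, fixing $c_n := c|_{\Omega_n}$ at the $n$th stage. Writing $\pi_n \colon \calG \to G_n$ and $\phi_n \colon G_{n+1} \to G_n$ for the canonical maps, and setting $U_n := \mathrm{Stab}_{G_n}(c_n)$ and $\mathcal{S}^{(m)} := \bigcap_{i \le m} \pi_i^{-1}(U_i)$, the full stabilizer $\mathrm{Stab}_{\calG}(c) = \bigcap_m \mathcal{S}^{(m)}$ is a closed subgroup of the profinite group $\calG$, and is therefore trivial if and only if each projection $P_n^{(\infty)} := \bigcap_m \pi_n(\mathcal{S}^{(m)}) \le G_n$ is trivial. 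Writing $P_n^{(m)} := \pi_n(\mathcal{S}^{(m)})$, a decreasing chain of subgroups in the finite group $G_n$, the goal reduces to the following: for every $n$ and every $g \in G_n \setminus \{1\}$, some stage $m$ \emph{kills} $g$ in the sense that $g \notin P_n^{(m)}$.

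The set of pairs $(n, g)$ with $g \in G_n \setminus \{1\}$ is countable, so I would use a diagonalization scheme. Enumerate them as $(n_k, g^{(k)})_{k \ge 1}$ and process one per stage: at the $k$th stage, having colored $\Omega_1, \dots, \Omega_{M_k}$, extend the coloring over finitely many further levels so as to kill $g^{(k)}$, if it is still alive. Once killed, an element stays killed, since the $P_n^{(m)}$ are monotone non-increasing; the resulting coloring then has $P_n^{(\infty)} = \{1\}$ for every $n$, which gives Theorem~\ref{thm:main1}. Everything therefore reduces to a main lemma: \emph{from any state of the construction, any still-alive $g \in G_n \setminus \{1\}$ can be killed by coloring finitely many further levels.}

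To attack the main lemma, I would track the set $C^{(m)} := P_m^{(m)} \cap \phi_{m,n}^{-1}(g) \subseteq G_m$ of \emph{surviving preimages of $g$ at level $m$}, where $\phi_{m,n} \colon G_m \to G_n$ is the composition of the $\phi_i$'s. A direct computation shows that $g \in P_n^{(m)}$ iff $C^{(m)} \ne \emptyset$, and that the recursion is $C^{(m+1)} = \phi_m^{-1}(C^{(m)}) \cap U_{m+1}$. Every $h \in C^{(m)}$ is non-identity (it lifts $g \ne 1$), and by faithfulness of the $G_m$-action on $\Omega_m$ it moves at least two points; the disjointness of the domains is exactly the condition that ensures successive levels supply genuinely independent opportunities to constrain such lifts. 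The main obstacle will be a \emph{growth issue}: since $|\phi_m^{-1}(C^{(m)})| = |\ker \phi_m| \cdot |C^{(m)}|$ may balloon as the kernels grow, no single choice of $c_{m+1}$ can, by a crude count alone, shrink $|C^{(m)}|$. Overcoming this should require a subtler monovariant, built from the interplay between setwise stabilizers of subsets of $\Omega_{m+1}$ and the coset structure of $\phi_m^{-1}(C^{(m)})$ modulo $\ker \phi_m$---precisely the epimorphism/stabilizer interaction advertised in the abstract. I expect this is where the bulk of the paper's work lies: supplying, via structural theory of finite permutation groups, a guarantee that colorings of a few successive $\Omega_i$'s can be chosen to strictly decrease a well-chosen invariant of $C^{(m)}$, driving it to zero in finitely many stages.
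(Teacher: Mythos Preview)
Your reduction to the ``main lemma'' is correct: the compactness argument identifying $\pi_n\bigl(\bigcap_m \mathcal{S}^{(m)}\bigr)$ with $\bigcap_m \pi_n(\mathcal{S}^{(m)})$ is valid, and the diagonalization over pairs $(n,g)$ would yield the theorem once the main lemma is established. But the main lemma is the entire substance of the paper, and you have not proved it---as you yourself acknowledge.

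Where your outline diverges from the paper is in the choice of monovariant. You propose tracking an invariant of the \emph{coset} $C^{(m)}$ of surviving lifts of a fixed $g$. The paper instead tracks an invariant of the \emph{group} $P_n^{(m)}$ as a whole: namely its position in a fixed finite chain of HS-closed classes
\[
\mathscr{Gr} \supset \mathscr{Sol} \supset \mathscr{Der}_{k_0} \supset \cdots \supset \mathscr{Der}_0 = \{1\}.
\]
Rather than killing elements one at a time, the paper (Lemma~\ref{lem:reduction}, Theorem~\ref{thm:program}) pushes \emph{every} $G_i$ one step down this chain using an infinite reservoir of as-yet-uncolored levels, then repeats for each of the finitely many steps. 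The step $\mathscr{Gr}\to\mathscr{Sol}$ is powered by Theorem~\ref{thm:simple} (a single coloring destroys any prescribed nonabelian simple quotient; this is where Seress's classification, and hence CFSG, enters); the step $\mathscr{Sol}\to\mathscr{Der}_{k_0}$ by Theorem~\ref{thm:bded-orb}; and each $\mathscr{Der}_k\to\mathscr{Der}_{k-1}$ by Prop.~\ref{prop:derivedlength-intro}.

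Your element-wise scheme can in principle be completed with the same ingredients (the finite version, Theorem~\ref{thm:finite}, applied to the tail of the epimorphically reduced system directly yields your main lemma). But the structural chain is not merely a packaging convenience. It is \emph{not} known that a single $2$-coloring always strictly shrinks an epimorphic image once the target is solvable---the paper flags this as open (item~(3) in Sec.~\ref{sec:open}), with $D_k\onto\zzz_2$ for $3\le k\le 5$ as counterexamples in the small---so the naive monovariant $|P_n^{(m)}|$ (or $|C^{(m)}|$) cannot be driven down one level at a time. The chain of classes, with its detour through bounded derived length, is precisely the paper's answer to ``what is the well-chosen invariant.''
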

In other words, the conclusion says that
the domain of $\calG$ has a subset,
not fixed setwise by any non-identity element of $\calG$.

Our proof builds on a line of work on asymmetric 
colorings of finite primitive permutation groups,
started by David Gluck (1983)~\cite{gluck}
and Peter Cameron, Peter M. Neumann, and Jan Saxl (1984)~\cite{saxl},
a theory to which we contribute in this paper.
By a counting argument (see Prop.~\ref{prop:motionlemma}),
also used by Gluck, Cameron et al.
proved that in addition to the symmetric and alternating
groups, there are only a finite number of 
primitive groups that do not admit an asymmetric
2-coloring (see Theorem~\ref{thm:saxl}).
Seress classified the exceptions (1997)~\cite{seress97}.
Our proof depends on Seress's classification.

The result of~\cite{saxl} depends on the
Classification of Finite Simple Groups (CFSG)
through a result by Cameron~\cite{cameron81}.
Seress uses detailed explicit knowledge of CFSG.
While Cameron's result, used in~\cite{saxl},
has recently been given a remarkable elementary
combinatorial proof by Sun and Wilmes~\cite{sunwilmes}
(see Sec.~\ref{sec:cfsg-free}),
through Seress's work, our paper continues to depend
on CFSG.  We suspect, though, that our proof can be
modified to avoid dependence on CFSG.

The automorphism group of a 2-coloring is the same as the
setwise stablizer of a subset of the permutation domain.
Our method is that we approximate asymmetry by
gradually simplifying the structure of the groups
involved in the inverse limit, by 2-coloring the
underlying sets of an infinite, coinfinite subset
of the finite groups that participate in the inverse limit.
The hard part is to reduce all groups
to solvable groups.  To this end, we introduce the concept
of \emph{solvable coloring:} a coloring that is preserved
only by a solvable subgroup of the automorphism group.
This concept, intermediate on the way to asymmetry, may
deserve a systematic study.  Once reduced to solvable
groups, we reduce the groups to bounded derived length,
and finally we show how to reduce the derived length,
until, at derived length zero, the group vanishes and asymmetry occurs.

Now we state our key lemma, which may be of independent interest.
\begin{theorem}[Reducing simple image]   \label{thm:simple}
Let $G\le\sym(\Omega)$ be a finite permutation group and
$T$ a finite nonabelian simple group.  Let
$\vf: G\onto T$ be an epimorphism.  Then
there exists a subset $\Delta\subseteq\Omega$
such that $\vf(G_{\Delta}) < T$, where $G_{\Delta}$
denotes the setwise stabilizer of $\Delta$.
\end{theorem}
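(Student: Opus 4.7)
The plan is to induct on $|\Omega|$. Setting $N := \ker \vf$, I first examine the induced action of $T = G/N$ on the set $\overline{\Omega}$ of $N$-orbits in $\Omega$. If this action is non-trivial, simplicity of $T$ forces it to be faithful, so I may pick $\overline{\Delta} \subseteq \overline{\Omega}$ whose setwise $T$-stabilizer is proper (a single point suffices when $T$ acts transitively on $\overline{\Omega}$, and otherwise a single $T$-orbit of $N$-orbits). Lifting to $\Delta = \bigcup_{\Omega_i \in \overline{\Delta}} \Omega_i$ gives $\vf(G_\Delta) \subseteq T_{\overline{\Delta}} < T$.

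Suppose henceforth that $G$ preserves every $N$-orbit $\Omega_i$, and let $r_i \colon G \to G_i := G|_{\Omega_i}$, $N_i := r_i(N)$. Since $G_i/N_i$ is a quotient of $T$, it is either trivial or isomorphic to $T$. If $G_i/N_i \cong T$ then the kernel $M_i$ of $r_i$ must be contained in $N$ (otherwise $N M_i = G$ would force $G_i/N_i$ trivial), so $\vf$ factors as $\vf_i \circ r_i$ with $\vf_i \colon G_i \onto T$; provided $|\Omega_i| < |\Omega|$, the induction hypothesis applied to $(G_i, \Omega_i, \vf_i)$ produces $\Delta \subseteq \Omega_i$ with $\vf_i((G_i)_\Delta) < T$, which pulls back to $\vf(G_\Delta) < T$.

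Two residual situations remain: (a) $N$ is transitive on $\Omega$, or (b) $G_i = N_i$ for every $N$-orbit, equivalently the pointwise stabilizer $M_i$ of every orbit satisfies $\vf(M_i) = T$. In case (a), if $\vf(G_\omega) < T$ for some $\omega \in \Omega$ take $\Delta = \{\omega\}$; otherwise $G = N G_\omega$ with $G_\omega / N_\omega \cong T$, and I apply the induction hypothesis to $(G_\omega, \Omega \setminus \{\omega\}, \vf|_{G_\omega})$ to obtain $\Delta' \subseteq \Omega \setminus \{\omega\}$ and set $\Delta = \{\omega\} \cup \Delta'$. In case (b), no subset of a single orbit can succeed (since $\vf(M_i) = T$ forces $\vf(G_\Delta) = T$ for any $\Delta \subseteq \Omega_i$), so I apply the induction hypothesis to $(M_i, \Omega \setminus \Omega_i, \vf|_{M_i})$ and take $\Delta = \Omega_i \cup \Delta'$ (or equivalently $\Delta = \Delta'$, since $G$ preserves $\Omega_i$ setwise so the two choices have identical setwise stabilizer in $G$).

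The hard part is verifying that $\vf(G_\Delta) < T$ in the residual cases. By construction $\vf(G_\Delta \cap H) < T$ for the relevant inductive subgroup $H \in \{G_\omega, M_i\}$, but $G_\Delta$ may contain further elements of $G \setminus H$ acting non-trivially on the outer structure --- permuting $\omega$ with a point of $\Delta'$ in case (a), or acting non-trivially on $\Omega_i$ in case (b) --- which could in principle supply the coset of $N$ missing from the inductive conclusion. Ruling out such swap-type elements is the main obstacle. The crucial leverage is that in the non-abelian simple group $T$ the normalizer of any non-trivial cyclic subgroup is a proper subgroup; combined with a judicious choice of the inductively produced $\Delta'$ (tuned so that the $G$-orbitals relevant to $\{\omega\} \cup \Delta'$ are not self-paired, or so that $(M_i \cap N) \cdot N_{\Delta'}$ exhausts $N$ in case (b)), this should suffice outside a small number of exceptional configurations in which one must invoke Seress's classification~\cite{seress97} of primitive groups without asymmetric 2-colorings.
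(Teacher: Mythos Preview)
Your approach---inducting on $|\Omega|$ via the $N$-orbits for $N=\ker\vf$, and then via point stabilizers---is genuinely different from the paper's, which inducts via $G$-orbits in the intransitive case and via a system of maximal blocks in the transitive case, with the primitive base case handled by the solvable-coloring machinery of Section~\ref{sec:solvable}. However, your argument has a real gap precisely where the paper does its heavy lifting.

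A small point first: your case (b) is vacuous. If $G$ preserves each $N$-orbit $\Omega_i$ and there is more than one, then $G$ is a subdirect product of the $G_i$, and the Subdirect Product Lemma (Lemma~\ref{lem:subdirect}) forces some $M_i\le N$; for that $i$ one has $G_i/N_i\cong G/(NM_i)=G/N\cong T$, so it is never the case that $G_i=N_i$ for \emph{all} $i$. (Incidentally, when $T$ acts nontrivially but intransitively on $\overline\Omega$, taking $\overline\Delta$ to be a full $T$-orbit gives stabilizer $T$, not a proper subgroup; you want a single point in a nontrivial $T$-orbit.) This leaves only case (a), namely $G$ transitive.

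In case (a) you recurse on $G_\omega$ acting on $\Omega\setminus\{\omega\}$, obtain $\Delta'$, and set $\Delta=\{\omega\}\cup\Delta'$. You correctly identify the obstruction: $G_\Delta$ may contain elements sending $\omega$ into $\Delta'$, so $\vf(G_\Delta)$ need not be contained in $\vf((G_\omega)_{\Delta'})$. But your proposed remedy---that normalizers of nontrivial cyclic subgroups in $T$ are proper, combined with a ``judicious choice'' of $\Delta'$ tuned to orbital self-pairing---is not an argument. The bare inductive hypothesis gives you no control over \emph{which} $\Delta'$ is produced, and you have not formulated a strengthened hypothesis that would supply such control; nor is there any visible mechanism linking self-paired orbitals or cyclic normalizers to the exclusion of elements of $G\setminus G_\omega$ from $G_\Delta$. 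The closing appeal to Seress's classification is likewise unmoored: you would have to specify exactly which configurations are exceptional and how the classification disposes of them.

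This is exactly the content the paper supplies in Sections~\ref{sec:primitive}--\ref{sec:general2simple}. There the transitive case is handled not via point stabilizers but via a minimal block system $\calB$ with kernel $N$: either $K\ge N$ and $\vf$ factors through the primitive action on $\calB$ (Case~1), or $K\ngeq N$ and one uses a solvable $5$-coloring of the primitive $\calB$-action together with a perfect-core argument (Case~2a), or one confronts $G/N\ge\alt(\calB)$ directly via the Three Normal Subgroups Lemma (Case~2b). Your recursion through $G_\omega$ has no analogue of this structure, and the hand-wave in your final paragraph does not substitute for it.
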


We shall need the following two additional results 
about colorings for finite permutation groups.  

\begin{theorem}[Bounded orbits for solvable groups] \label{thm:bded-orb}
There is a constant $C$ such that the following holds.
Let $G\le\sym(\Omega)$ be a solvable finite permutation group.
Then there exists a subset $\Delta\subseteq\Omega$
such that every orbit of $G_{\Delta}$ has length $\le C$.
\end{theorem}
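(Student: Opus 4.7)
I plan to prove this by induction on $|\Omega|$. For the primitive case, Seress's theorem states that every primitive solvable permutation group has a base of size at most $4$; taking $\Delta$ to be such a base yields an embedding $G_{\Delta}\hookrightarrow\sym(\Delta)$, so $|G_{\Delta}|\le 24$ and every orbit of $G_{\Delta}$ on $\Omega$ has length at most $24$, establishing the conclusion in the primitive case with $C=24$. For the non-transitive case, apply the inductive hypothesis to each $G$-orbit separately and take the union of the resulting subsets: since $G_{\Delta}$ preserves the orbit decomposition, its orbits on $\Omega$ are orbits of the restrictions, and the bound $C$ is preserved.

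For the transitive imprimitive case, let $\Pi$ be a nontrivial block system for $G$ and $K=\ker(G\to G^{\Pi})$. The strategy is a two-level coloring. By induction applied to $G^{\Pi}\le\sym(\Pi)$ on the smaller set $\Pi$, I obtain $\Pi'\subseteq\Pi$ with $(G^{\Pi})_{\Pi'}$ having bounded orbits on $\Pi$. By induction applied to $G_B|_B$ on each smaller set $B$, I obtain $\Delta_B^*\subseteq B$ with $(G_B|_B)_{\Delta_B^*}$ having bounded orbits on $B$. I combine these by setting $\Delta\cap B=\Delta_B^*$ if $B\notin\Pi'$ and $\Delta\cap B=B\setminus\Delta_B^*$ if $B\in\Pi'$, choosing $|\Delta_B^*|\ne|B|/2$ (complementing $\Delta_B^*$ within $B$ if necessary) so that membership of $B$ in $\Pi'$ is recoverable from the size $|\Delta\cap B|$. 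Any $g\in G_{\Delta}$ then preserves $\Pi'$ setwise at the block level and preserves $\Delta_B^*$ within each block.

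The main obstacle is keeping the constant $C$ from blowing up through the induction: the orbit of $G_{\Delta}$ on $\Omega$ is bounded naively by the product of the block-level and within-block bounds, giving $C^{2}$ rather than $C$, so the induction fails to close directly. To close the induction, I would strengthen the block-level hypothesis to an \emph{asymmetric} coloring, \ie, $(G^{\Pi})_{\Pi'}=1$, which forces $G_{\Delta}\subseteq K$ and confines $G_{\Delta}$-orbits to single blocks, where the within-block bound $C$ applies directly. Asymmetric $2$-colorings of primitive solvable groups exist except for the finite explicit list in Seress's classification; the exceptions have bounded degree, so a direct case analysis absorbs their contribution into an absolute constant. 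Propagating this strengthened hypothesis through the recursion on block systems---whose depth is unbounded but whose damage at each level is absolute---yields the desired uniform constant $C$.
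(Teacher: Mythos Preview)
Your primitive and intransitive cases are fine; the primitive case via Seress's base-size bound is clean and actually a bit slicker than the paper's route through Gluck. The gap is in the imprimitive case. You correctly identify that the naive induction squares the constant, and your proposed fix---forcing $(G^{\Pi})_{\Pi'}=1$ so that $G_{\Delta}\le K$---is the right instinct. But the sentence ``the exceptions have bounded degree, so a direct case analysis absorbs their contribution into an absolute constant'' does not hold up. The exceptional primitive solvable groups (Gluck's list, degrees $\le 9$) can appear as the \emph{primitive quotient at every level} of a maximal chain of block systems: take the iterated wreath product $S_4\wr S_4\wr\cdots\wr S_4$ in its natural imprimitive action. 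Here every primitive quotient is $S_4$, which has no asymmetric $2$-coloring, so at every level you fail to confine $G_{\Delta}$ to the block kernel, and the bounded ``damage at each level'' multiplies through unboundedly many levels. Your two-size encoding (subset vs.\ complement) gives you only two colors at the block level, and two colors are simply not enough to break solvable symmetry in general.

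The paper's fix is to replace the block-level $2$-coloring by a $5$-coloring: it first proves, by a separate induction, that \emph{every} solvable permutation group (not just primitive ones) admits an asymmetric $5$-coloring. This kills the block-level action in one stroke, with no recursion on block-system depth. The $5$ colors are then encoded into a single subset $\Delta\subseteq\Omega$ by letting $|\Delta\cap B|$ range over five distinct values---which is possible once the blocks are large enough, and in fact once they are large enough one can choose five \emph{asymmetric} subsets of distinct sizes within each block, giving asymmetry outright. Blocks of size $\le 3$ need a short separate argument. So the missing idea in your plan is: move from $2$ to $5$ colors at the block level (where asymmetry is always achievable), and encode those colors via subset sizes rather than via complementation.
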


We use this result only to infer that 
$G_\Delta$ has bounded derived length.

Along the way to proving Theorem~\ref{thm:bded-orb},
we show that every solvable permutation group
has an asymmetric 5-coloring (Lemma~\ref{lem:solv5color}).
(The bound 5 is tight, see Remark~\ref{rem:5colorstight}.)

The following observation will allow us to
move from bounded derived length to asymmetry.

\begin{prop}   \label{prop:derivedlength-intro}
Every nontrivial solvable permutation group
admits a 2-coloring that reduces its derived length.  
\end{prop}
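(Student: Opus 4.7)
My plan is to exploit the penultimate term $N:=G^{(d-1)}$ of the derived series of $G$, where $d\ge 1$ denotes the derived length of $G$. Since $[N,N]=G^{(d)}=1$, $N$ is a nontrivial abelian characteristic subgroup. The key reduction is that it suffices to produce a subset $\Delta\subseteq\Omega$ with $N\cap G_\Delta=1$: indeed, $G_\Delta\le G$ forces $(G_\Delta)^{(d-1)}\le G^{(d-1)}=N$, while trivially $(G_\Delta)^{(d-1)}\le G_\Delta$, so $(G_\Delta)^{(d-1)}\le N\cap G_\Delta$, and triviality of this intersection would drop the derived length of $G_\Delta$ to at most $d-1$.

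With this reduction in hand, I would choose $\Delta$ to be a transversal for the nontrivial orbits of $N$ on $\Omega$: pick one representative $x_i\in O_i$ from each nontrivial $N$-orbit $O_i$ and set $\Delta=\{x_i\}$ (points of $\Omega$ fixed by $N$ may be placed in either color class arbitrarily). If $g\in N$ fixes $\Delta$ setwise then $gx_i\in O_i\cap\Delta=\{x_i\}$, so $g$ fixes each $x_i$. Because $N$ is abelian, the point stabilizer of $x_i$ coincides with the point stabilizer of every other point of $O_i$; hence $\bigcap_i N_{x_i}$ equals the kernel of the action of $N$ on $\supp(N)=\bigsqcup_i O_i$. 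That kernel is trivial, since $N\le \sym(\Omega)$ is faithful and automatically fixes every point outside $\supp(N)$.

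There is no real obstacle to this argument: no counting, no case analysis, and no appeal to the structure theory of solvable groups beyond the existence of a nontrivial abelian characteristic subgroup. The only step requiring a moment's thought is the reduction in the first paragraph, which converts ``reduce the derived length of the setwise stabilizer'' into ``trivialize the setwise stabilizer in a particular abelian subgroup''; once there, abelianness turns setwise stabilization of a transversal into pointwise stabilization, and faithfulness of $N$ on $\supp(N)$ finishes the job.
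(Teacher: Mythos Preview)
Your proof is correct and essentially identical to the paper's own argument: the paper also takes $H:=G^{(k-1)}$, shows that any abelian permutation group has an asymmetric subset by choosing a transversal of its orbits (using that transitive abelian groups are regular), and then observes $(G_\Delta)^{(k-1)}\le G_\Delta\cap H = H_\Delta = 1$. The only cosmetic difference is that you single out the nontrivial $N$-orbits while the paper takes a transversal of all orbits; both versions work for the same reason.
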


\subsection{Structure of the paper}
We briefly review the author's view of the 
history of combinatorial symmetry breaking 
in Sec.~\ref{sec:history}.  Rudimentary
definitions follow in Sec.~\ref{sec:def1}.
We reduce Tucker's conjecture to the study of
epimorphisms of finite permutation groups in 
Sections~\ref{sec:inverse}--\ref{sec:approximation}.
This includes a review of inverse systems of group
homomorphisms in Sec.~\ref{sec:inverse}.
The program of gradual structural reduction of the groups
is formalized in Theorem~\ref{thm:program}.

After Sec.~\ref{sec:approximation}, all groups considered
are finite.  Further details of group theoretic notation
are reviewed in Sec.~\ref{sec:defs}.
The first phase of our program of structural reductions,
going from general to solvable groups,
occupies Sections~\ref{sec:solvable} to~\ref{sec:red-to-solvable}.
This includes the proof of Theorem~\ref{thm:simple}
for primitive groups in Sec.~\ref{sec:primitive}
and for general permutation groups in Sec.~\ref{sec:general2simple}.

We achieve bounded derived length in Section~\ref{sec:bded-derived-length}
by proving Theorem~\ref{thm:bded-orb}.
The reduction of the derived length (Prop.~\ref{prop:derivedlength-intro})
is the subject of Sec.~\ref{sec:reducing-derived-length}.
This completes the proof of the Infinite Motion Conjecture.

In Section~\ref{sec:effective} we state a finite version of
Theorem~\ref{thm:main1} (Theorem~\ref{thm:finite}) and
state two conjectures in search for more effective versions
of this result.

In Section~\ref{sec:cfsg-free} we address the question of CFSG-free
proofs and combinatorial generalizations, notably to the motion
of primitive coherent configurations.

The techniques introduced in this paper give rise
to a number of new questions and potential areas
of research; some of these are listed in the
concluding section of this paper, Sec.~\ref{sec:open}.

\section{A brief history of combinatrial symmetry breaking}
\label{sec:history}
The study of asymmetry dates back to the 1939 paper by 
Roberto Frucht~\cite{frucht39} which established
that every finite group is isomorphic to the automorphism
group of a graph.  Frucht introduced asymmetric ``gadgets''
to code colors and thereby eliminate unwanted automorphisms.
A decade later Frucht proved the same result for 3-regular 
graphs~\cite{frucht49}; as a tool, he constructed
an asymmetric 3-regular graph that today bears his name.
Analogues of Frucht's Theorem were found to hold for
many other classes of structures, notably including
Steiner Triple Systems~\cite{mendelsohn} and algebraic number
fields~\cite{fried}.  A theory was developed to construct
graphs whose automorphism group was a prescribed regular
permutation group (GRR theory \cite{hetzel, godsil, DRR, morris}).  
In all these cases, 
it is easy to construct structures on which the given group acts;
the problem is the elimination of unwanted symmetry by coding
asymmetry into the structure to get exactly the desired group of
automorphisms.
In the 1960s, this line of work was extended by the Prague school
of category theory to endomorphisms~\cite{pultr65}. A key aspect of
this generalization was the construction of \emph{rigid graphs},
\ie, graphs with no nontrivial 
endomorphisms~\cite{pultr65,vopenka65,lambek69}.

It was in this context that this author considered the
simplest possible asymmetric graphs 
in the infinite case, establishing that for any two
(finite or infinite) cardinal numbers $\kappa > \lambda \ge 2$
there exists an asymmetric tree with only these two
degrees~\cite{trees}.
The key auxiliary result was that a regular tree
of (finite or infinite) degree $\kappa \ge 2$ admits
an \emph{asymmetric 2-coloring}---the first result
on asymmetric colorings.

In 1983/84, the idea of asymmetric 2-colorings was introduced
in group theory in a pair of independent papers,
by Gluck~\cite{gluck} and by Cameron--Neumann--Saxl~\cite{saxl}.
Gluck found that all but a finite number of solvable primitive
groups admit an asymmetric 2-coloring, and gave the exact
list of exceptions (all of them of degree $\le 9$).
Cameron--Neumann--Saxl proved
that all but a finite number
of primitive permutation groups other than $A_n$ and $S_n$
admit an asymmetric 2-coloring.  Subsequently
Seress~\cite{seress97} classified all the exceptions;
Seress's paper is one of our key references.

The motivation of Gluck, Cameron et al., and Seress
was in classical questions of the theory of 
permutation groups and partly, in questions of computational
group theory through the closely related concept of \emph{bases}
of permutation groups (see Seress's monograph~\cite{seressbook}).
These papers discovered the relationship of asymmetric
colorings with the \emph{minimal degree} of the permutation
group, \ie, the minimum number of elements not fixed by a
non-identity element of the group.  This is a classical concept,
studied since the time of Jordan~\cite{jordan} and 
Bochert~\cite{bochert} in the 19th century 
(see~\cite[Sec. 3.3]{dixon-mortimer}).

A \emph{base} of a permutation group $G\le\sym(\Omega)$
is a subset $\Delta\subseteq\Omega$ such that the
pointwise stabilizer $G_{(\Delta)}$ is the identity.
This classical ``symmetry breaking'' concept, evidently 
closely related to the notion of asymmetric colorings,
gained interest in computational group theory through
the work of Charles Sims in the 1960s~\cite{sims}
(see~\cite{seressbook}).  The significance of this concept
to asymptotic group theory comes from the observation
that if $G\le S_n$ has a base of size $b$ then $|G|\le n^b$.
Bounding the orders of \emph{primitive} permutation groups
was a central question of 19th century group theory
(see \cite{praeger} for some of this history).

In 1979, this author found a graph theoretic method to bound the
base size of primitive permutation groups in the more general
context of \emph{primitive coherent configurations} (PCCs)--certain
highly regular colorings of the edges of the complete directed graph
(see Sec.~\ref{sec:PCC}).  Entirely omitting group theory,
this method nevertheless produced a nearly tight upper
bound on the base size ($O(\sqrt{n}\log n)$), and therefore
on the order, of any primitive group other than the
alternating and the symmetric groups, solving a then century-old
problem~\cite{annals}.  The key technical result was a
symmetry-breaking tool:
how many vertices can distinguish between a given pair of vertices,
see Theorem~\ref{thm:annals}.  As a byproduct, a lower bound on
the minimal degree of primitive groups and on the motion of
PCCs follows (Theorem~\ref{thm:annals} and
Obs.~\ref{obs:uni-motion}); the bound is tight
within a constant factor.

Significant progress over this result occurred in 2015 when
Xiaorui Sun and John Wilmes extended the result to a classification
of all primitive groups of minimum base size greater than essentially
$n^{1/3}$, while building a structure theory of
primitive coherent configurations along the way.
This remarkable result, previously
only known through CFSG~\cite{cameron81}, can be used to give
an elementary proof of the Cameron--Neumann--Saxl result mentioned
(see Sec.~\ref{sec:cfsg-free}).
It would presumably play a prominent role in a CFSG-free
proof of Tucker's Conjecture.  

The \emph{Graph Isomorphism problem} has been a chief producer
and consumer of cost-conscious symmetry breaking techniques.
Both~\cite{annals} and~\cite{sunwilmes} were partly motivated by
the complexity of this problem.
Many new details about this connection, in the context of
coherent configurations, emerged in~\cite{quasipoly},
where a quasipolynomial-time algorithm for testing
graph isomorphism is described.  Comments on 
measuring the cost appear in Sec.~\ref{sec:open}.

\mn
{\bf Terminology.} \quad
A much-cited 1996 paper by 
Michael Albertson and Karen Collins \cite{collins}
introduced asymmetric colorings under the name
``distinguishing coloring.''  Although this terminology
has found a large following, I find it unfortunate
for multiple reasons and will continue to use the term 
``distinguish'' in more natural meanings, including in this paper
(see right before Theorem~\ref{thm:annals}, and
%%%%%% Problem 10 
Problem~\ref{pr:cameron-schemes}
in Sec.~\ref{sec:open}).  The term ``asymmetric coloring'' was
introduced in~\cite{trees} (1977).

The term ``minimal degree'' of a permutation group has been in
use at least since Wielandt's 1964 book~\cite{wielandt}, and
has since been used in a large body of literature in the theory
of permutation groups.  However, this term is ill-suited for 
applications to the automorphism group of a graph, where it
could be confused with the minimum degree of the graph---an
unrelated concept.  Therefore I have adopted the term
``motion'' of a graph or other structure, meaning the minimal
degree of its automorphism group, as suggested in~\cite{russell}.

\section{Definitions, notation: group actions and coloring}
\label{sec:def1}
Structural group theoretic definitions and notation
will be reviewed in Section~\ref{sec:defs}.  The beautiful
monograph~\cite{dixon-mortimer} covers most of the
group theory we need.  In this section we only deal
with the most rudimentary concepts.

\begin{definition}[Coloring]
Let $\Sigma$ be an ordered set; we refer to the elements
of $\Sigma$ as ``colors.''  A $\Sigma$-coloring of a
set $\Omega$ is a function $\gamma: \Omega\to \Sigma$.
If $|\Sigma|=k$, we speak of a $k$-coloring and
often use $\Sigma :=[k]=\{1,2,\dots, k\}$ as the set of colors.
We identify the subsets of $\Omega$
with 2-colorings, using a fixed ordered pair of colors.
\end{definition}

\begin{notation}[Symmetric and alternating groups]
For a set $\Omega$, we write $\sym(\Omega)$ and $\alt(\Omega)$
for the symmetric and the alteranting group, resp., on $\Omega$.
We also write $S_n$ for the generic symmetric group of
degree $n$ and $A_n$ for the alternating group of degree $n$.
\end{notation}

\begin{definition}[Group action]  \label{def:action}
Let $G$ be a group, $\Omega$ a set.  A \emph{$G$-action}
on $\Omega$, denoted $G\acts\Omega$,
is a homomorphism $\vf: G\to\sym(\Omega)$.
We refer to $\Omega$ as the \emph{permutation domain.}
The \emph{image} of the action $G\stackrel{\vf}{\acts}\Omega$
is the group $\vf(G)\le\sym(\Omega)$.
For $\pi\in G$ and $x\in\Omega$ we write $\pi(x)$ to
denote $(\vf(\pi))(x)$ if the action $\vf$ is clear
from the context.
\end{definition}

\begin{definition}[Coloring for a group action]
Let $\vf: G\acts\Omega$ be a group action.
By a coloring for $\vf$ we mean a coloring of $\Omega$.
For a coloring $\gamma:\Omega\to\Sigma$ and $\pi\in G$
we write $\pi(\gamma)$ to denote the coloring
$(\pi(\gamma))(x)=\gamma(\pi^{-1}(x))$ $(x\in\Omega)$.
\end{definition}

\begin{definition}[Stabilizers]
Let $\vf: G\acts\Omega$ be a group action.
Let $x\in\Omega$.  The \emph{stabilizer} of $x\in\Omega$
is the subgroup $G_x=\{\pi\in G\mid \pi(x)=x\}$.
For $\Delta\subseteq\Omega$ we write
$G_\Delta=\{\pi\in G\mid \pi(\Delta)=\Delta\}$
for the \emph{setwise stabilizer} of $\Delta$
and $G_{(\Delta)}=\bigcap_{x\in\Delta} G_x$
for the \emph{pointwise stabilizer} of $\Delta$.
The \emph{stabilizer of the coloring} $\gamma$ is the
subgroup $G_\gamma = \{\pi\in G\mid \pi(\gamma)=\gamma\}$.
We refer to $G_\gamma$ as the group of \emph{$G$-automorphisms
of the coloring $\gamma$.} In other words, the $G$-automorphisms 
of the coloring $\gamma$ are those elements of $G$ that 
preserve $\gamma$.
\end{definition}
\begin{obs}[Subsets vs. 2-colorings]   \label{obs:set-stab}
If $\Sigma=\{a,b\}$ where $a < b$ 
and $\gamma:\Omega\to\Sigma$ is a 2-coloring
then $G_\gamma=G_\Delta$ where $\Delta=\gamma^{-1}(b)$.
    \hfill $\Box$ 
\end{obs}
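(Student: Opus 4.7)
The plan is to unfold the two stabilizer definitions and check that they describe the same subgroup of $G$. The key point, already implicit in the Coloring definition, is that a $2$-coloring $\gamma:\Omega\to\{a,b\}$ is the same piece of data as the subset $\Delta=\gamma^{-1}(b)$: we have $\gamma(x)=b$ iff $x\in\Delta$, and $\gamma(x)=a$ iff $x\notin\Delta$.

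With this identification in place, I would translate each stabilizer condition directly. By the definition of $\pi(\gamma)$, the condition $\pi\in G_\gamma$ reads $\gamma(\pi^{-1}(x))=\gamma(x)$ for every $x\in\Omega$. Because $\gamma$ takes only two values, this equality of colors is equivalent to the biconditional $\pi^{-1}(x)\in\Delta \iff x\in\Delta$ holding for all $x$, which is exactly $\pi^{-1}(\Delta)=\Delta$. Since $\pi$ is a bijection of $\Omega$, this is the same as $\pi(\Delta)=\Delta$, i.e., $\pi\in G_\Delta$. Reading the chain of equivalences in either direction yields $G_\gamma=G_\Delta$.

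There is no real obstacle: the observation is a bookkeeping check that the identification of $2$-colorings with subsets is compatible with the two stabilizer notions. In the actual write-up I would present it as a one-line chain of equivalences.
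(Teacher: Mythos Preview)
Your argument is correct. The paper offers no proof at all (the statement is marked with a terminal $\Box$ as self-evident), so your unfolding of the two stabilizer definitions is precisely the trivial verification the author leaves to the reader.
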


\begin{definition}[Permutation group]
A \emph{permutation group} acting on $\Omega$ is a subgroup of 
$\sym(\Omega)$.  We view permutation groups
as faithful actions.  A \emph{coloring} for $G$
is a coloring of $\Omega$.  (This means a coloring for this
faithful action).
\end{definition}

\begin{definition}[Group theoretic properties of colorings] \label{def:asy}
Let $G\acts\Omega$ be an action.  We say that a coloring $\gamma$ 
for $G$ is \emph{asymmetric} if $G_\gamma=1$.
We also say that such a coloring of $\Omega$ is $G$-asymmetric.
Analogously we speak of \emph{$G$-asymmetric subsets}
of the domain.   We call the minimum number of colors
required by an asymmetric coloring for $G$ the
\emph{asymmetric coloring number} of $G$.
We say that $\gamma$ is a \emph{solvable coloring} if $G_\gamma$
is a solvable group.  We also express this circumstance by saying
that the coloring $\gamma$ \emph{results in a solvable group}.
We call the minimum number of colors required by a solvable coloring
the \emph{solvable coloring number} of $G$.

We may analogously ascribe other group theoretic properties
to $\gamma$.  For example, $\gamma$ can result in a 2-group
(\ie, $G_{\gamma}$ is a 2-group) or in a solvable group
with derived length $\le 3$, etc.

We define the corresponding concepts
for subsets of the domain (asymmetric subsets,
solvable subsets, subsets resulting in a 2-group, etc.) 
via the correspondence to 2-colorings (Obs.~\ref{obs:set-stab}).
We say that $G$ \emph{admits} a coloring with certain property
(\eg, a solvable $k$-coloring) if there is a coloring 
of the domain with the given property.
\end{definition}
Note that an asymmetric coloring exists if and only
if the action of $G$ is faithful.

\begin{definition}[Support, minimal degree]
Let $G\le\sym(\Omega)$ be a permutation group.
The \emph{support} of $\sigma\in G$ is the set
$\supp(\sigma):=\{x\in\Omega\mid \sigma(x)\neq x\}$.
The \emph{minimal degree} of $G$ is
$\mu(G):=\min_{\sigma\in G\setminus\{1\}}|\supp(\sigma)|$\,,
the minimum number of elements moved (not fixed) by 
any non-identity element of $G$.  If $|G|=1$
then its minimal degree is ``super-infinity,'' denoted $\infty$ 
and thought of as being greater than any cardinal number.
\end{definition}

For instance, for $n\ge 3$, $\mu(S_n)=2$ and $\mu(A_n)=3$.

\begin{definition}[Coloring of structures]
Let $\mfX=(\Omega,\calR)$ be a structure (such as a graph).
We say that a coloring $\gamma:\Omega\to\Sigma$ of the
underlying set (set of vertices) has a certain property
\wrt $\mfX$ (such as being asymmetric or solvable)
if it has the corresponding property \wrt $\aut(\mfX)$.
\end{definition}
\begin{definition}[Motion]
The \emph{motion} of a structure $\mfX$ is the minimal degree
of $\aut(\mfX)$.  
\end{definition}
The term ``motion'' in this meaning was introduced
in \cite{russell}.

\mn
Let $G\le \sym(\Omega)$ be a permutation group.  
We say that $G$ is a \emph{finite permutation group}
if its domain $\Omega$ is finite.

\section{Inverse systems of epimorphisms}
\label{sec:inverse}

Inverse systems can be indexed by an arbitrary poset.  In this paper,
we only consider inverse systems of infinite \emph{sequences} of groups,
so the index set is $\nnn=\{0,1,\dots\}$, the set of natural numbers,
and the infinite subsets of $\nnn$, under the natural ordering.
\begin{convention}
Throughout this paper, the letters $I$ and $J$ will denote infinite
subsets of $\nnn$, with the natural ordering.
\end{convention}
\begin{definition}[Inverse system of homomorphisms of finite groups]
Let $(G_i\mid i\in I)$ be an infinite sequence of finite groups.
For $i \le j$ $(i,j\in I)$ let $\vf_{i,j} : G_j \to G_i$ be a 
homomorphism such that for $i \le j\le k$ $(i,j,k\in I)$, the
\emph{compatibility condition} $\vf_{i,j}\vf_{j,k} = \vf_{i,k}$ holds. 
The homomorphism $\vf_{ii}$ is the identity on $G_i$.
The $\vf_{i,j}$ are called the \emph{transition homomorphisms}.
The groups $G_i$, together with the transition homomorphisms,
form an \emph{inverse system}, denoted $(G_i, \vf_{i,j})_I$,
or simply $(G_i, \vf_{i,j})$ if the index set $I$ is clear from
the context.  We say that the system is \emph{epimorphic} if all 
transition homomorphisms are surjective.
\end{definition}
\begin{definition}[Inverse limit, strands]  \label{def:strand}
Let us consider a sequence $(g_i \mid i\in I)$ of group
elements, $g_i\in G_i$.  
Given the inverse system above, we call such a 
sequence a \emph{strand} if for all $i \le j$ we have
$g_i = \vf_{i,j}(g_j)$.  The inverse limit 
$\calG =\varprojlim G_i$ is the subgroup of the
direct product of the $G_i$ consisting of the strands. 
Viewing the $G_i$ as discrete groups and endowing their
direct product with the product topology, $\calG$ is
a closed subgroup of the direct product (a profinite group).
\end{definition}

\begin{fact}   \label{fact:aut-is-limit}
Let $X$ be a connected, locally finite, infinite %% rooted
graph, with possibly colored vertices.  Let $x_0$ be a vertex.
Then $\aut(X)_{x_0}$ (the stabilizer of $x_0$ in $\aut(X)$)
is the inverse limit of a sequence of finite permutation groups.
\end{fact}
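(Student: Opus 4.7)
The plan is to realize $\aut(X)_{x_0}$ as the inverse limit of its actions on the concentric balls around $x_0$. Let $B_n = B_n(x_0)$ denote the set of vertices at graph distance at most $n$ from $x_0$. Local finiteness together with a simple induction on $n$ makes each $B_n$ finite, and connectedness gives $V(X) = \bigcup_{n\ge 0} B_n$. Any $\sigma \in \aut(X)_{x_0}$ preserves distance from $x_0$, hence fixes each $B_n$ setwise, and $\sigma$ is determined by its restrictions $\sigma|_{B_n}$.

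First I would set $\rho_n : \aut(X)_{x_0} \to \sym(B_n)$ to be the restriction homomorphism and define $G_n := \rho_n(\aut(X)_{x_0})$, a finite permutation group on $B_n$ (which takes into account any vertex coloring, since $\rho_n$ is only restricting color-preserving automorphisms). For $n \le m$, restricting a permutation of $B_m$ to $B_n$ gives a well-defined homomorphism $\vf_{n,m} : G_m \to G_n$, surjective by the construction of $G_n$, and the compatibility $\vf_{n,m}\vf_{m,k} = \vf_{n,k}$ is automatic since ordinary set-function restrictions compose. The identity $\vf_{n,n} = \mathrm{id}$ is clear.

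Next I would verify that the canonical map $\Psi : \aut(X)_{x_0} \to \varprojlim G_n$, $\sigma \mapsto (\rho_n(\sigma))_n$, is a group isomorphism. It is a homomorphism, and it is injective because $V(X) = \bigcup_n B_n$. For surjectivity, given a strand $(g_n)$, glue the $g_n$ into a map $\sigma : V(X) \to V(X)$ by $\sigma(x) := g_n(x)$ whenever $x \in B_n$; the strand condition $\vf_{n,m}(g_m) = g_n$ makes this unambiguous. Each $g_n$ equals $\tau_n|_{B_n}$ for some $\tau_n \in \aut(X)_{x_0}$, so $g_n$ preserves adjacency (and vertex colors) within $B_n$; since every edge of $X$ has both endpoints in some $B_n$, the glued $\sigma$ is an adjacency-preserving, color-preserving bijection of $V(X)$, hence lies in $\aut(X)$, and $\sigma(x_0)=g_0(x_0)=x_0$.

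I do not anticipate any serious obstacle. The only conceptual point is to define $G_n$ as the actual image of the restriction map, rather than as some a priori subgroup of $\sym(B_n)$; with this choice the transition homomorphisms are automatically well defined and even surjective, and the rest is a direct unpacking of the definition of the inverse limit (Definition~\ref{def:strand}). Note that the resulting system is epimorphic, which is convenient for the sequel, but the domains $B_n$ are nested rather than disjoint; the disjointness required by Theorem~\ref{thm:main1} will have to be arranged separately in the subsequent reduction.
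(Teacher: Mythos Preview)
Your proof is correct and follows the same approach as the paper: realizing $\aut(X)_{x_0}$ as the inverse limit of its actions on the balls $B_n(x_0)$. The paper's sketch actually mentions two variants---taking $G_n$ to be $\aut(B_n)$ or taking $G_n$ to be the restriction of $\aut(X)_{x_0}$ to $B_n$---and you have chosen the second, which the paper identifies as the epimorphic reduction; your closing remark about the nested (non-disjoint) domains is exactly the point the paper flags for the subsequent reduction.
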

\begin{proof}[Idea of proof]
The finite permutation groups in question are the
automorphism groups of the balls of each radius about $x_0$.
We can alternatively also take the restrictions of $\aut(X)$  
to those balls; this would correspond to the epimorphic
reduction discussed below.
\end{proof}

\begin{fact}
Let $(G_i, \vf_{i,j})_I$ be an \emph{epimorphic}
inverse system with limit $\calG =\varprojlim G_i$.
Let $\pi_i: \calG \to G_i$ denote the projection to the $i$-th
coordinate.  Then each $\pi_i$ is an epimorphism.
\hfill $\Box$ 
\end{fact}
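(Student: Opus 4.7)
The plan is to fix $i \in I$ and an arbitrary element $g \in G_i$, then produce a strand $(h_j)_{j \in I} \in \calG$ satisfying $h_i = g$. For indices $j \le i$ the value is forced by compatibility, namely $h_j := \vf_{j,i}(g)$, and these values form a consistent partial strand on $\{j \in I : j \le i\}$ by the cocycle identity $\vf_{j,k}\vf_{k,i} = \vf_{j,i}$. The entire difficulty lies in choosing the values $h_j$ for $j > i$ in a mutually coherent way.

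For each $n \in I$ with $n \ge i$, surjectivity of $\vf_{i,n}$ supplies at least one element $h_n^{(n)} \in \vf_{i,n}^{-1}(g)$; one then extends downward by setting $h_k^{(n)} := \vf_{k,n}(h_n^{(n)})$ for $i \le k \le n$, and the cocycle identity guarantees internal consistency. This shows that, for every finite initial segment of $I$ above $i$, at least one coherent partial strand extending the forced values below level $i$ exists. The subtlety is that these partial strands, chosen independently for different $n$, are not in general prefixes of one another, so they cannot simply be taken together.

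The standard device for producing a single coherent choice across all levels is a compactness argument. I would consider the rooted tree $T$ whose nodes at level $n \ge i$ (for $n \in I$) are the coherent partial strands of length $n$ extending the forced partial strand at level $i$, with tree edges given by restriction to a shorter initial segment. Each node has only finitely many children since each $G_j$ is finite, and by the previous paragraph $T$ contains a node at every level. K\"onig's Infinite Lemma then yields an infinite branch in $T$, which is precisely a strand $(h_j) \in \calG$ with $h_i = g$, proving that $\pi_i$ is surjective.

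I expect the only genuine obstacle is the coherence issue just flagged: recognizing that surjectivity of every transition map $\vf_{i,j}$ is not by itself enough, and that a finiteness-based compactness step is needed to glue the local lifts into a global one. This is precisely where the hypothesis that each $G_j$ is finite enters; without it (e.g., for inverse systems of arbitrary surjections of sets), the analogous statement can fail.
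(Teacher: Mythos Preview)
Your argument is correct: the K\"onig's Lemma construction you outline is the standard way to see that projections from an inverse limit of finite groups onto an epimorphic system are surjective. The paper itself offers no proof of this Fact (it is marked with a bare $\Box$), treating it as a well-known property of inverse limits of finite groups; your write-up supplies exactly the missing details, including the crucial observation that finiteness of the $G_j$ is what makes the compactness step go through.
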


\begin{fact}[Epimorphic reduction]  \label{fact:epiredux}
Let $(G_i, \vf_{i,j})_I$ be an inverse system of finite groups 
with inverse limit $\calG = \varprojlim G_i$.
Then there exist subgroups $H_i\le G_i$ such that
$(H_i, (\vf_{i,j})_{|H_j})_I$ is an \emph{epimorphic}
inverse system such that $\calG = \varprojlim H_i$.  We call
this system the \emph{epimorphic reduction} of the system
$(G_i, \vf_{i,j})_I$.  The epimorphic reduction is unique.
\end{fact}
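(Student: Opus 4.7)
The plan is to take $H_i := \pi_i(\calG)$, where $\pi_i \colon \calG \to G_i$ is the projection to the $i$-th coordinate, and verify that this choice works and is forced. Everything the proof needs follows from chasing compatibility along strands; no nontrivial set-theoretic or topological input beyond the finiteness of the $G_i$ is required.

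First I would show that $(H_i,(\vf_{i,j})_{|H_j})$ is an epimorphic inverse system. Each $H_i$ is a subgroup of $G_i$ since it is the image of the homomorphism $\pi_i$. The definition of a strand gives the identity $\vf_{i,j}\circ\pi_j=\pi_i$ of maps $\calG\to G_i$ for $i\le j$ in $I$: indeed, for any strand $(g_k)_{k\in I}\in\calG$ we have by definition $\vf_{i,j}(g_j)=g_i$, i.e., $\vf_{i,j}(\pi_j((g_k)))=\pi_i((g_k))$. Applying this identity to all of $\calG$ yields
\[
  \vf_{i,j}(H_j)=\vf_{i,j}(\pi_j(\calG))=\pi_i(\calG)=H_i,
\]
so the restrictions $(\vf_{i,j})_{|H_j}\colon H_j\to H_i$ are well-defined surjective homomorphisms, and the compatibility relations $\vf_{i,j}\vf_{j,k}=\vf_{i,k}$ carry over verbatim from the ambient system.

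Next I would identify $\calG$ with $\varprojlim H_i$. The inclusion $\calG\subseteq\varprojlim H_i$ is immediate: every element of $\calG$ is by definition a strand, and its $i$-th coordinate lies in $\pi_i(\calG)=H_i$. The reverse inclusion is equally direct, because $H_i\le G_i$, so any strand in $(H_i,(\vf_{i,j})_{|H_j})$ is automatically a strand in $(G_i,\vf_{i,j})$, hence an element of $\calG$.

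For uniqueness, suppose $(H_i',(\vf_{i,j})_{|H_j'})$ is any epimorphic inverse subsystem of $(G_i,\vf_{i,j})$ whose inverse limit is also $\calG$. The fact stated immediately before this one says that in an epimorphic inverse system the projections from the inverse limit are surjective; applying it to the primed system yields $H_i'=\pi_i(\varprojlim H_j')=\pi_i(\calG)=H_i$. Thus the epimorphic reduction is uniquely determined.

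I do not expect any serious obstacle here: the only place one might worry is whether $\pi_i(\calG)$ is ``large enough''---i.e., whether $H_i$ genuinely equals the stabilized image $\bigcap_{j\ge i}\vf_{i,j}(G_j)$---but this does not need to be addressed directly, because the preceding fact about projections of epimorphic systems, once the epimorphic property of $(H_i)$ is established, simultaneously delivers both that $\pi_i(\calG)=H_i$ and the uniqueness statement.
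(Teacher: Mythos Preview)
Your proof is correct and takes exactly the same approach as the paper: define $H_i=\pi_i(\calG)$ and verify the required properties. The paper's proof is terse (it declares the conclusion ``clearly'' holds), whereas you have filled in the details---the epimorphic property via $\vf_{i,j}\circ\pi_j=\pi_i$, the two inclusions for the limit, and the uniqueness argument using the preceding fact on surjectivity of projections in an epimorphic system---all correctly.
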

\begin{proof}
Let $\pi_i: \calG \to G_i$ denote the projection to the $i$-th
coordinate.  Let $H_i = \pi_i(\calG)$.
Then the system consisting of the groups $H_i$
with the transition maps $(\vf_{i,j})_{|H_j}$
is clearly the unique inverse system satisfying
the conditions.
\end{proof}

\begin{prop}[Sublimit]
Let $J\subseteq I\subseteq\nnn$  
and let $(G_i,\vf_{i,j})_I$ be an inverse system
of finite groups.  Let $\calG=\varprojlim_{i\in I} G_i$
and let $\calH=\varprojlim_{j\in J} G_j$.  Then the projection
$\calG\onto\calH$ is an isomorphism. \hfill $\Box$ 
\end{prop}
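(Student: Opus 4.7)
The plan is to verify directly that the projection $\rho\colon\calG\to\calH$, sending an $I$-strand $(g_i)_{i\in I}$ to its subsequence $(g_j)_{j\in J}$, is a group isomorphism. That $\rho$ is a homomorphism is immediate from the componentwise group structure of an inverse limit. The only property of $J$ I would invoke is cofinality in $I$: since both are infinite subsets of $\nnn$, for every $i\in I$ there exists $j\in J$ with $j\ge i$.

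For injectivity, suppose $(g_i)_{i\in I}$ lies in the kernel, so $g_j=1$ for every $j\in J$. Given any $i\in I$, choose $j\in J$ with $j\ge i$; the strand condition yields $g_i=\vf_{i,j}(g_j)=1$, hence the whole strand is trivial.

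For surjectivity, given a $J$-strand $(h_j)_{j\in J}$ I would extend it to an $I$-strand by the natural rule: for each $i\in I$, pick some $j\in J$ with $j\ge i$ and set $g_i:=\vf_{i,j}(h_j)$. Two routine checks remain, both reducing to diagram chases. First, $g_i$ is independent of the choice of $j$: comparing two choices $j,j'$ via any $j''\in J$ with $j''\ge\max(j,j')$, the compatibility $\vf_{i,j}\vf_{j,j''}=\vf_{i,j''}$ together with the strand property of $(h_j)$ gives $\vf_{i,j}(h_j)=\vf_{i,j''}(h_{j''})=\vf_{i,j'}(h_{j'})$. Second, the resulting family satisfies $g_i=\vf_{i,i'}(g_{i'})$ for $i\le i'$ in $I$: pick $j\in J$ with $j\ge i'$ and unfold both sides to $\vf_{i,j}(h_j)$. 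Restricting the constructed $I$-strand back to $J$ recovers $(h_j)$ (taking $j=i$ when $i\in J$), so $\rho$ is surjective.

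There is no real obstacle here; the statement is a formal consequence of the cofinality of $J$ in $I$ together with the compatibility of the transition maps, and its content is simply the familiar fact that an inverse limit over $\nnn$ is determined, up to canonical isomorphism, by any cofinal subsequence.
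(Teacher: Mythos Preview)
Your proof is correct; the paper itself gives no proof at all (the statement is marked with a terminal $\Box$ and left as an exercise). Your argument is the standard one via cofinality of $J$ in $I$, which is exactly what the author is silently relying on.
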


\begin{definition}[$\Lambda$-reduction]
Let $(G_i,\vf_{i,j})_I$ 
be an inverse system as above,
with inverse limit $\calG$, and
let $\Lambda=(L_i \mid i\in I)$ where $L_i\le G_i$.  
The \emph{$\Lambda$-reduction} $\calL$ of $\calG$ consists
of those strands $(g_i : i\in I)$ 
where $g_i\in L_i$ for all $i$.
\end{definition}
\begin{fact}
$\calL$ is a closed subgroup of $\calG$ and 
$\calL = \varprojlim_{i\in I} L_i$.
\end{fact}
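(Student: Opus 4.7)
The plan is to verify three claims: (1) $\calL$ is a subgroup of $\calG$, (2) $\calL$ is closed in $\calG$, and (3) $\calL$ coincides with $\varprojlim_{i\in I} L_i$.

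For (1), I would check the subgroup axioms coordinate-wise in $\prod_i G_i$. Given strands $(g_i), (h_i) \in \calL$, the sequence $(g_i h_i^{-1})$ has each coordinate in $L_i$ because $L_i \le G_i$, and satisfies the strand compatibility condition $g_i h_i^{-1} = \vf_{i,j}(g_j)\vf_{i,j}(h_j)^{-1} = \vf_{i,j}(g_j h_j^{-1})$ because each $\vf_{i,j}$ is a homomorphism; so it lies in $\calL$.

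For (2), the approach is to leverage the standard profinite topology on $\calG \le \prod_i G_i$. Each projection $\pi_i : \calG \to G_i$ is continuous, and since $G_i$ is finite and discrete, the subgroup $L_i$ is clopen in $G_i$. Hence $\pi_i^{-1}(L_i)$ is clopen in $\calG$, and $\calL = \bigcap_{i\in I} \pi_i^{-1}(L_i)$ is an intersection of closed subsets, hence itself closed.

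For (3), the identification is essentially a matter of unwinding definitions, but I would take care with one subtlety: because the maps $\vf_{i,j}$ need not carry all of $L_j$ into $L_i$, the pairs $(L_i, \vf_{i,j}|_{L_j})$ do not in general form a bona fide inverse system of groups in the sense of the paper. To avoid this issue I would interpret $\varprojlim_{i\in I} L_i$ via the strand formulation introduced in Definition~\ref{def:strand}: the set of sequences $(g_i) \in \prod_i L_i$ satisfying $g_i = \vf_{i,j}(g_j)$ for all $i \le j$. Under this reading, membership in $\varprojlim L_i$ coincides exactly with the defining conditions of $\calL$, giving the equality.

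I do not anticipate any substantive technical obstacle; the only point requiring care is the interpretation of $\varprojlim L_i$ when the restricted transition maps may fail to land in the corresponding subgroups, which is settled by returning to the strand formulation of the inverse limit.
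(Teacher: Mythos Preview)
Your proof is correct. The paper itself states this Fact without proof, treating it as immediate from the definitions, so there is nothing to compare against; your write-up simply fills in the routine verification.

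The one point worth highlighting is your care in part (3). You are right that for arbitrary subgroups $L_i\le G_i$ the restricted maps $\vf_{i,j}|_{L_j}$ need not land in $L_i$, so $(L_i,\vf_{i,j}|_{L_j})_I$ is not literally an inverse system in the sense of the paper's definition. Your resolution via the strand formulation is exactly the intended reading: $\varprojlim_{i\in I} L_i$ is to be understood as the set of compatible sequences in $\prod_i L_i$, which then tautologically coincides with $\calL$. In the paper's actual applications (e.g., the Observation immediately following this Fact, where $L_i=(G_i)_{\gamma_i}$), this subtlety does not arise in practice, but it is good that you flagged it.
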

Let us now consider an inverse system $(G_i,\vf_{i,j})_I$
where each $G_i$ is a finite permutation group, 
$G_i\le \sym(\Omega_i)$.  
Critically, we assume the $\Omega_i$ are \emph{disjoint}
(an assumption that is patently \emph{false} in the proof of
Fact~\ref{fact:aut-is-limit}).

\begin{definition}[Action of inverse limit of permutation groups
    with disjoint domains]  \label{def:limperm}
Let $(G_i,\vf_{i,j})_I$ be an inverse system of finite
permutation groups $G_i\le \sym(\Omega_i)$. 
Assume the $\Omega_i$ are \emph{disjoint}.
Let $\Omega = \bigsqcup_{i\in I}\Omega_i$.
We view the direct product $\prod_{i\in I} G_i$
as a permutation group acting on $\Omega$ (coordinatewise).
As a consequence, the inverse limit $\calG = \varprojlim_{i\in I} G_i$
is also a permutation group acting on $\Omega$.
\end{definition}

\begin{obs}[Coloring of inverse limit of
    permutation groups with disjoint domains]
Under the assumptions of Def.~\ref{def:limperm},
let $\gamma$ be a coloring of the set
$\Omega=\bigsqcup_{i\in I}\Omega_i$\,.
In accordance with our conventions,
we say that $\gamma$ is a \emph{coloring for the inverse limit}
$\calG = \varprojlim G_i$.
Let $\gamma_i$ be the coloring $\gamma$ restricted to $\Omega_i$.
Let $L_i=(G_i)_{\gamma_i}$ and set $\Lambda = (L_i : i\in I)$.
Let $\calL\le\calG$ denote the $\Lambda$-reduction of $\calG$.
Then $\calL = \calG_\gamma$.  In particular,
$\gamma$ is an \emph{asymmetric coloring} for $\calG$ if and only if
$\calL = 1$.      \hfill $\Box$ 
\end{obs}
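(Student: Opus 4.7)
The statement is essentially an unpacking of the definitions, so the plan is to verify that the stabilizer of $\gamma$ in $\calG$ coincides with the $\Lambda$-reduction, strand by strand.

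First I would unpack the action of a strand. A strand $\pi = (g_i)_{i\in I} \in \calG \le \prod_{i\in I} G_i$ acts on $\Omega = \bigsqcup_{i\in I}\Omega_i$ coordinatewise (Def.~\ref{def:limperm}). Crucially, because the $\Omega_i$ are disjoint, for every $x\in\Omega$ there is a unique $i\in I$ with $x\in\Omega_i$, and $\pi(x) = g_i(x) \in \Omega_i$. In particular, $\pi$ preserves the partition of $\Omega$ into the blocks $\Omega_i$.

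Next I would localize the preservation of $\gamma$. Since $\pi$ preserves each $\Omega_i$, and $\gamma_i := \gamma|_{\Omega_i}$, we have $\pi(\gamma) = \gamma$ if and only if $\pi(\gamma)$ agrees with $\gamma$ on each $\Omega_i$, which is to say $g_i(\gamma_i) = \gamma_i$ for every $i\in I$. By the definition of $L_i = (G_i)_{\gamma_i}$, this is the same as requiring $g_i \in L_i$ for every $i$. By the definition of the $\Lambda$-reduction, this is exactly the condition $\pi \in \calL$. Thus $\calG_\gamma = \calL$.

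The ``in particular'' clause is then immediate: $\gamma$ is $\calG$-asymmetric iff $\calG_\gamma = 1$ iff $\calL = 1$. There is no real obstacle here; the only substantive point is the disjointness of the $\Omega_i$, which is what allows us to evaluate $\pi(x)$ using only the single coordinate $g_i$ and hence to decouple the preservation condition across $i\in I$.
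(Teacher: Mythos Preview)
Your proof is correct and is precisely the routine unpacking of the definitions that the paper intends; the paper itself gives no proof beyond the $\Box$, since the observation is immediate once one notes that the coordinatewise action preserves each $\Omega_i$ and hence the color-preservation condition decouples across $i\in I$.
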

  
The Infinite Motion Conjecture will easily follow from the 
following result, previously stated as Theorem~\ref{thm:main1}.
\begin{theorem}[main technical result]  \label{thm:main2}
Let $\calG$ be the inverse limit of an infinite sequence of finite
permutation groups with disjoint domains.
Then $\calG$ admits an asymmetric 2-coloring.
\end{theorem}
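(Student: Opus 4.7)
The plan is a three-phase iterative construction, each phase applying one of the paper's structural-reduction tools. The key scheduling device is the Sublimit Proposition: any infinite $J\subseteq I$ yields an inverse limit canonically isomorphic to $\calG$. I would partition $I$ into finitely many disjoint infinite blocks $I_1\sqcup I_2\sqcup I_3\sqcup\dots\sqcup I_{d_0+2}$, where $d_0$ is the universal derived-length bound implicit in Theorem~\ref{thm:bded-orb}, and dedicate each block to one phase, coloring $\Omega_i$ (for $i\in I_k$) according to the $k$-th phase's prescription. Because the $\Omega_i$ are disjoint, combining the partial 2-colorings yields a single 2-coloring of $\Omega$ whose stabilizer in $\calG$ is the intersection of the phase stabilizers.

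\emph{Phase 1: reduce to solvable.} Using Theorem~\ref{thm:simple} iteratively, for each $i\in I_1$ I would build a 2-coloring of $\Omega_i$ so that the setwise stabilizer $(G_i)_{\gamma_i}$ is solvable. The iteration walks down a composition series of $G_i$, shrinking the image in each nonabelian simple quotient step by step; since $G_i$ is finite, this terminates. Call the resulting refined profinite group $\calL_1\le\calG$. For any $j\in I$ and any $i\in I_1$ with $j\le i$, the projection $\pi_j(\calL_1)$ embeds into $\vf_{j,i}((G_i)_{\gamma_i})$, the homomorphic image of a solvable group, and is therefore solvable. Since $I_1$ is cofinal in $I$, every coordinate of $\calL_1$ is solvable, so (via the sublimit and epimorphic-reduction machinery) $\calL_1$ is an inverse limit of finite solvable permutation groups.

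\emph{Phase 2: bounded derived length}, then \emph{Phase 3: eliminate derived length.} For each $i\in I_2$, I would apply Theorem~\ref{thm:bded-orb} to the current residual group $\pi_i(\calL_1)$ to obtain a 2-coloring of $\Omega_i$ whose setwise stabilizer has all orbits of length $\le C$. A transitive solvable group on $\le C$ points has derived length at most some constant $d_0=d_0(C)$, so the refinement $\calL_2\le\calL_1$ is pro-solvable of coordinatewise derived length $\le d_0$. Then, for each $k\in\{3,4,\dots,d_0+2\}$, I would use Proposition~\ref{prop:derivedlength-intro} on coordinates $i\in I_k$ to choose a 2-coloring that strictly reduces the derived length of the current stabilizer. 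After all $d_0$ drops, each coordinate stabilizer is trivial; as a closed subgroup of a product of trivial groups in the profinite topology, the final group $\calL_{d_0+2}$ is trivial, and the combined 2-coloring is asymmetric.

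The main obstacle will be Phase 1: Theorem~\ref{thm:simple} delivers only a strict reduction of the image in a single nonabelian simple quotient, not its annihilation. Coordinating the iterative walk down the composition series across the infinite family $I_1$, while respecting the evolving residual projections $\pi_i(\calL_{k-1})$ rather than the ambient $G_i$, will require the detailed structural theory promised in Sections~\ref{sec:solvable}--\ref{sec:red-to-solvable} and ultimately rests on Seress's CFSG-based classification of the exceptional primitive groups. A secondary subtlety is verifying that a single 2-coloring of $\Omega_i$ can absorb all the successive subset-refinements produced by iterating Theorem~\ref{thm:simple}, which will hinge on the fact that these refinements can be encoded as nested subset choices within the same $\Omega_i$.
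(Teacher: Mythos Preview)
Your Phase~1 has a genuine gap that cannot be repaired along the lines you sketch. You propose, for each $i\in I_1$, to find a single 2-coloring of $\Omega_i$ with $(G_i)_{\gamma_i}$ solvable; but such a coloring need not exist. Theorem~\ref{thm:prim3col}(II) gives $\solv(\alt(\Omega))=\lceil n/4\rceil$, so if some $G_i$ is an alternating group in its natural action (nothing in the hypotheses excludes this), no 2-coloring of $\Omega_i$ yields a solvable stabilizer. Iterating Theorem~\ref{thm:simple} within the single domain $\Omega_i$ does not help: each application produces a subset $\Delta_k\subseteq\Omega_i$, and the group you want, $\bigcap_k (G_i)_{\Delta_k}$, is the stabilizer of the partition these subsets generate, i.e., of a coloring with (in general) far more than two colors. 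Your ``nested subset'' hope is unfounded---Theorem~\ref{thm:simple} gives no control over how successive $\Delta_k$'s relate---and even two nested subsets already encode a 3-coloring.

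The idea you are missing is the one formalized in Lemma~\ref{lem:reduction}: the reduction of $G_i$ is carried out not by coloring $\Omega_i$ but by coloring the domains $\Omega_m$ of \emph{other} groups in the system, for finitely many indices $m>i$, and pushing the stabilizer forward through the transition epimorphism $\vf_{im}$. One round of the while-loop picks a fresh $m$, chooses a single subset $\Psi_m\subseteq\Omega_m$, and obtains $\vf_{im}\bigl((\vf_{im}^{-1}(F))_{\Psi_m}\bigr)<F$ for the current image $F\le G_i$; since $F$ strictly decreases and $G_i$ is finite, finitely many distinct $\Omega_m$'s suffice. This is precisely why Theorem~\ref{thm:simple} only needs to deliver a proper image rather than a solvable one, and it is why the disjointness of the domains is the crucial hypothesis. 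Your Phases~2 and~3, by contrast, are essentially sound as written (assuming the $I_k$ are chosen cofinal), because Theorem~\ref{thm:bded-orb} and Proposition~\ref{prop:derivedlength-intro} each achieve their target in a single 2-coloring of a single domain; it is only the passage to solvability that requires spreading the work across many domains.
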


The disjointness condition illuminates the role of the
\emph{infinite motion assumption} in Tucker's Conjecture.
The latter will permit us to replace the \emph{balls} 
in the proof of Fact~\ref{fact:aut-is-limit}
by the \emph{spheres}, which are disjoint 
(see Lemma~\ref{lem:invertible} below).

\section{Reduction of Tucker's Conjecture to Theorem~\ref{thm:main1}}
\label{sec:tucker-reduction}
First we reduce the problem to the case of rooted graphs,
where a designated vertex (the ``root'') is fixed by
all automorphisms. 

\begin{notation}[Spheres, balls]
Let $\rho$ denote the distance metric in the connected graph $X$
with vertex set $V$.
For a vertex $v\in V$, let $S_d(v)=\{w\in V\mid \rho(v,w)=d\}$
denote the \emph{sphere} of radius $d$ about $v$ and
$B_d(v)=\{w\in V\mid \rho(v,w)\le d\}$
the \emph{ball} of radius $d$ about $v$.
\end{notation}
\begin{definition}[Twins]
Let us call the vertices $u\neq v$ of the graph $X$
\emph{twins} if the transposition $(u,v)$ is an automorphism of $X$.
We say that the graph $X$ \emph{twin-free} if there are
no twins in $X$.
\end{definition}
\begin{obs}
If $X$ has infinite motion then it is twin-free.
    \hfill $\Box$
\end{obs}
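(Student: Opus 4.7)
The plan is a one-line contrapositive directly from the definition of motion. Recall that the motion of $X$ is $\mu(\aut(X)) = \min_{\sigma\in\aut(X)\setminus\{1\}}|\supp(\sigma)|$. So I would assume $X$ has a pair of twins $u\neq v$, and exhibit a non-identity automorphism with small support to contradict infinite motion. Concretely, by the definition of twins the transposition $\tau=(u,v)$ belongs to $\aut(X)$; evidently $\tau\neq 1$, and $\supp(\tau)=\{u,v\}$, a set of size $2$. Hence $\mu(\aut(X))\le 2$, contradicting the assumption that $X$ has infinite motion.

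There is no real obstacle here, so no step stands out as the main difficulty. The only minor point I would verify is that the edge cases of the definition of $\mu$ do not interfere: the value $\infty$ (``super-infinity'') is assigned only when $\aut(X)$ is trivial, but in the presence of twins the automorphism group contains the nontrivial element $\tau$, so the minimum is taken over a nonempty set and equals $2$. No results from earlier in the paper beyond the definitions of motion and twins are required.
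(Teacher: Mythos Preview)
Your proposal is correct and is precisely the intended argument: the paper does not even spell out a proof (just the $\Box$), because the observation is immediate from the definitions exactly as you wrote. Nothing needs to be changed or added.
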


\begin{definition}[Special subset]
Let $X$ be a connected locally finite graph with vertex set $V$.
Let $x_0\in V$.  We call a subset $\Delta\subseteq V$ \emph{special}
(with respect to $x_0$)
if $\Delta\capp B_1(x_0)=\emptyset$ and 
$S_{2d}(x_0)\subseteq\Delta$ for all $d\ge 1$.
\end{definition}

\begin{lemma}[Designated root]  \label{lem:root}
Let $X$ be a connected, twin-free graph with vertex set $V$.
Let $x_0\in V$ and let $\Delta$ be a special subset of $V$
(with respect to $x_0$).
Then $\aut(X)_\Delta$ fixes $x_0$.
\end{lemma}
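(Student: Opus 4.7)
The plan is to argue by contradiction: assume $\sigma \in \aut(X)_\Delta$ with $y := \sigma(x_0) \neq x_0$, and set $d := \rho(x_0, y) \geq 1$. Since $\sigma$ is a graph automorphism preserving $\Delta$, I can transport the two defining conditions of specialness to $y$: from $\Delta \cap B_1(x_0) = \emptyset$ and $S_{2e}(x_0) \subseteq \Delta$ (for $e \geq 1$), applying $\sigma$ yields $\Delta \cap B_1(y) = \emptyset$ and $S_{2e}(y) \subseteq \Delta$, so $\Delta$ is constrained near both $x_0$ and $y$ in exactly symmetric ways.

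I then case-split on $d$. If $d$ is even, then $d \geq 2$, so $y \in S_d(x_0) \subseteq \Delta$ conflicts with $y \in B_1(y)$ forcing $y \notin \Delta$. If $d$ is odd and $d \geq 3$, I pick a neighbor $v$ of $y$ on a geodesic toward $x_0$; then $\rho(v, x_0) = d - 1 \geq 2$ is even, so $v \in S_{d-1}(x_0) \subseteq \Delta$, yet $v \in B_1(y)$ forces $v \notin \Delta$, contradiction.

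The remaining case $d = 1$ is where the twin-free hypothesis enters, and I expect it to be the main (and only genuinely graph-theoretic) obstacle. For any neighbor $v$ of $y$ with $v \neq x_0$, the triangle inequality gives $\rho(v, x_0) \in \{1, 2\}$; if $\rho(v, x_0) = 2$, then $v \in S_2(x_0) \subseteq \Delta$, but $v \in B_1(y)$ forces $v \notin \Delta$, contradiction. Hence $v \in S_1(x_0)$, and by the symmetric argument every neighbor of $x_0$ other than $y$ is a neighbor of $y$. This gives $S_1(x_0) \setminus \{y\} = S_1(y) \setminus \{x_0\}$, so the transposition $(x_0, y)$ is an automorphism of $X$, making $x_0$ and $y$ twins and contradicting the hypothesis that $X$ is twin-free. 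The key insight is that the sphere-containment half of specialness alone rules out every $d \neq 1$, but $d = 1$ slips through those parity arguments and must be killed by the structural assumption that $X$ has no twins.
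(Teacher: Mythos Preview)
Your proof is correct. The paper takes a somewhat different, more ``invariant-theoretic'' route: it defines the set $Z=\{v\in V\mid B_1(v)\cap\Delta=\emptyset\}$, observes that $Z$ is $\aut(X)_\Delta$-invariant, shows $Z\subseteq B_1(x_0)$ and $B_1(z)\subseteq B_1(x_0)$ for every $z\in Z$, and then uses twin-freeness to conclude that $B_1(z)\subsetneq B_1(x_0)$ for $z\neq x_0$, so $x_0$ is the unique maximal element of $Z$ under the relation $z\mapsto B_1(z)$. Your direct contradiction argument unpacks exactly the same content: your cases $d\ge 2$ amount to showing $\sigma(x_0)\in B_1(x_0)$ (equivalently $Z\subseteq B_1(x_0)$, since $\sigma(x_0)\in Z$ by transported specialness), and your $d=1$ case establishes $N(x_0)\setminus\{y\}=N(y)\setminus\{x_0\}$, which is precisely the twin condition that the paper invokes via $B_1(z)=B_1(x_0)$. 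The paper's presentation is more compressed and makes the invariance explicit; yours is more hands-on and makes the role of each hypothesis (sphere containment for $d\ge 2$, twin-freeness for $d=1$) transparent.
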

\begin{proof}
Let $Z=\{v\in V\mid B_1(v)\capp\Delta =\emptyset\}$.
Then $x_0\in Z\subseteq B_1(x_0)$.  Moreover, for each
$z\in Z$ we have $B_1(z)\subseteq B_1(x_0)$.
Since $X$ is twin-free, we in fact have $B_1(z)\subset B_1(x_0)$.
Consequently $x_0$ is the unique vertex in $Z$ adjacent
to all other vertices in $Z$.
\end{proof}

Using Theorem~\ref{thm:main1} we shall show the following.
\begin{theorem}  \label{thm:rooted}
Let $X$ be a locally finite connected
graph with infinite motion and let $x_0$ be a vertex.
Then $X$ admits an asymmetric special subset \wrt $x_0$.
\end{theorem}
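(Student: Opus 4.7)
The plan is to reduce Theorem~\ref{thm:rooted} to Theorem~\ref{thm:main2} via Lemma~\ref{lem:root}.  Since infinite motion implies twin-freeness, Lemma~\ref{lem:root} applies, so the setwise stabilizer in $\aut(X)$ of any special subset is contained in $G := \aut(X)_{x_0}$.  It therefore suffices to produce a special subset $\Delta$ with $G_\Delta = 1$.  Because every $\sigma \in G$ preserves each sphere $S_d(x_0)$, the condition $\sigma(\Delta) = \Delta$ unpacks as $\sigma(\Delta \cap S_d(x_0)) = \Delta \cap S_d(x_0)$ for every $d$; on the spheres where specialness already forces $\Delta$ to be all or nothing (namely $S_0$, $S_1$, and $S_{2e}$ for $e\ge 1$) this is automatic, so the freedom in $\Delta$ is precisely a 2-coloring $\gamma$ of
\[
\Omega := \bigsqcup_{e \ge 1} S_{2e+1}(x_0),
\]
and the task reduces to producing such a $\gamma$ with trivial stabilizer in $G$.

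I would manufacture $\gamma$ by applying Theorem~\ref{thm:main2} to an inverse system derived from the balls-based realization $G = \varprojlim K_d$ provided by Fact~\ref{fact:aut-is-limit}, where $K_d$ is the restriction of $G$ to $B_d(x_0)$ and the transition maps are further restrictions.  The key auxiliary result---the ``invertibility lemma'' announced just after Theorem~\ref{thm:main2}---is that each restriction map $K_d \to \sym(S_d(x_0))$ is injective.  I would prove this by the following short surgery argument.  Suppose $\sigma \in G$ fixes $S_d(x_0)$ pointwise; because there are no edges between $B_{d-1}(x_0)$ and $V \setminus B_d(x_0)$, the permutation $\widetilde{\sigma}$ that agrees with $\sigma$ on $B_{d-1}(x_0)$ and with the identity off $B_{d-1}(x_0)$ is itself an automorphism of $X$.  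It has finite support, so infinite motion forces $\widetilde{\sigma} = 1$, whence $\sigma$ fixes $B_{d-1}(x_0)$, and combined with the hypothesis, it fixes $B_d(x_0)$ pointwise.

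Granting this injectivity, I identify each $K_d$ with a finite permutation group on $S_d(x_0)$ and pass to the cofinal odd-indexed subsystem; by the Sublimit Proposition, $G = \varprojlim_e K_{2e+1}$, and these $K_{2e+1}$ now act on the \emph{pairwise disjoint} spheres $S_{2e+1}(x_0)$.  Per Definition~\ref{def:limperm}, this realizes $G$ as a permutation group on $\Omega$, and the action is faithful because $\bigcap_e \ker(G \onto K_{2e+1}) = 1$.  Theorem~\ref{thm:main2} now supplies a $G$-asymmetric 2-coloring $\gamma : \Omega \to \{0,1\}$, and setting
\[
\Delta := \gamma^{-1}(1) \cup \bigcup_{d \ge 1} S_{2d}(x_0)
\]
yields a special subset with $G_\Delta = G_\gamma = 1$, as required.

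The main obstacle is the invertibility lemma: it is the only step in which the infinite motion hypothesis plays an essential role, converting the balls-based inverse limit (with nested, hence non-disjoint, domains) into the spheres-based inverse limit (with disjoint domains) required by Theorem~\ref{thm:main2}.  The surgery argument sketched above dispatches it in a single stroke.
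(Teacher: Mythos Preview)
Your proposal is correct and follows essentially the same approach as the paper's own proof: reduce to $G=\aut(X)_{x_0}$ via Lemma~\ref{lem:root}, use the surgery argument (the paper's Lemma~\ref{lem:invertible}) to identify the ball-restriction groups with sphere-restriction groups, pass to the odd-indexed subsystem to obtain disjoint domains, apply Theorem~\ref{thm:main2}, and then pad the resulting asymmetric subset with the even spheres to make it special. Your write-up is in places more explicit than the paper's (e.g., you spell out why specialness leaves freedom only on the odd spheres $S_{2e+1}$ with $e\ge 1$, and why injectivity of $G\to\varprojlim K_{2e+1}$ suffices), but the skeleton is identical.
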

Note that here we do not make it an assumption that 
all automorphisms fix $x_0$.  The automorphisms that
fix a special subset will automatically fix $x_0$
by Lemma~\ref{lem:root}.

For a locally finite connected rooted graph $X=(V,E,x_0)$
we use the following notation.

Let $\calG=\aut(X)$.  Here we made $x_0$ a constant in
the language of the structure $X$, so this group fixes $x_0$ by
definition.
Consequently, $\calG$ also fixes each sphere about $x_0$ (setwise).
Let $B_i=B_i(x_0)$ and $S_i=S_i(x_0)$.
Let $H_i$ denote the restriction of $\calG$ to 
$B_i$ and $G_i$ the restriction of $\calG$ to $S_i$
(so $H_i\le\sym(B_i)$ and $G_i\le\sym(S_i)$).
For $i\in I$ let $\pi_i: H_i \onto G_i$ denote
the restriction from $B_i$ to $S_i$ (an epimorphism)
and for $i\le j$ let $\psi_{i,j}: H_j\onto H_i$ denote the
restriction from $B_j$ to $B_i$ (again, an epimorphism).

\begin{lemma}   \label{lem:invertible}
Let $X$ be a locally finite connected
rooted graph with infinite motion.
Then each restriction epimorphism $\pi_i$
(from the ball $B_i$ to the sphere $S_i$)
is an isomorphism.
\end{lemma}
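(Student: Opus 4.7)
My plan is to prove $\pi_i$ injective by contradiction, producing an automorphism of $X$ with finite nonempty support if the kernel of $\pi_i$ were nontrivial, which contradicts infinite motion.

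Suppose, toward a contradiction, that $\sigma \in H_i$ is a nonidentity element with $\pi_i(\sigma)=1$, that is, $\sigma$ acts as the identity on $S_i$ but nontrivially on $B_{i-1} = B_i \setminus S_i$. Since $H_i$ is the restriction of $\calG = \aut(X)$ to $B_i$, and every element of $\calG$ fixes $x_0$ and therefore preserves each sphere $S_d$ setwise, we know that $\sigma$ is an automorphism of the induced subgraph $X[B_i]$ (it is the restriction of some $\hat\sigma \in \calG$, but I will not use this lift further; only the fact that $\sigma$ preserves adjacency within $B_i$ matters).

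Now I would construct a new permutation $\tilde\sigma$ of the vertex set $V$ by setting $\tilde\sigma|_{B_i} := \sigma$ and $\tilde\sigma|_{V\setminus B_i} := \mathrm{id}$. The key step is to verify that $\tilde\sigma \in \aut(X)$. Adjacency is clearly preserved on edges contained entirely in $B_i$ (because $\sigma$ is an automorphism of $X[B_i]$) and on edges contained entirely in $V\setminus B_i$ (because $\tilde\sigma$ is the identity there). The only remaining edges are the ones crossing the boundary of $B_i$, and in a connected graph such a crossing edge $\{u,v\}$ with $u\in B_i$, $v\notin B_i$ forces $u\in S_i$ and $v\in S_{i+1}$. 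But $\sigma$ fixes $S_i$ pointwise by assumption, so $\tilde\sigma(u)=u$ and $\tilde\sigma(v)=v$, and the edge is preserved. Hence $\tilde\sigma$ is a graph automorphism of $X$.

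By construction $\supp(\tilde\sigma) \subseteq B_{i-1}$, which is finite because $X$ is locally finite, and $\supp(\tilde\sigma)$ is nonempty because $\sigma$ is nontrivial on $B_{i-1}$. Thus $\tilde\sigma$ is a nonidentity automorphism of $X$ with finite support, contradicting the hypothesis that $X$ has infinite motion. The only subtle point in the argument is the observation that the boundary edges go from $S_i$ to $S_{i+1}$, which is precisely what allows the ``truncate to $B_i$'' extension to remain an automorphism; this is where the sphere-vs-ball distinction (and implicitly the rootedness) does the work.
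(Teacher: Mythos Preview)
Your proof is correct and follows essentially the same approach as the paper: extend a kernel element by the identity outside $B_i$ and use infinite motion to force it to be trivial. You supply more detail than the paper in explicitly verifying that the extension is an automorphism (checking the three edge types and observing that boundary edges join $S_i$ to $S_{i+1}$), which the paper leaves implicit.
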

\begin{proof}
Let $\sigma\in\ker\pi_i$.  So $\sigma$ acts on $B_i$ and
fixes $S_i$ pointwise.  Let $\widehat \sigma$ denote 
the extension of $\sigma$ to $V$ obtained by fixing
all vertices in $V\setminus B_i$.  So $\widehat \sigma$
is an automorphism of $X$ that fixes all vertices
outside the finite set $B_i$ and therefore,
by the infinite motion assumption, it fixes 
all vertices in $B_i$ as well, so $\sigma=1$.
\end{proof}

\begin{proof}[Proof of Theorem~\ref{thm:rooted}
from Theorem~\ref{thm:main1}]
By Lemma~\ref{lem:invertible}, we can define,
for $i \le j$, the $G_j\onto G_i$ epimorphism
$\vf_{i,j}=\pi_j\psi_{i,j}\pi_i^{-1}$.
Let $\calG$ denote the inverse limit of the
system $(G_i,\vf_{i,j})_{2\nnn +3}$.
Recall that the domain of $G_i$ is the sphere $S_i$,
so the domains of these permutation groups
are disjoint.

Let $\widehat\calG$ denote the restriction of
$\calG$ to the set $A=\bigsqcup\{S_i \mid i\in 2\nnn +3\}$.
By Theorem~\ref{thm:main1}, the group $\widehat\calG$ admits
an asymmetric subset $\Gamma$.  Let now 
$\Delta = \Gamma \cup \bigcup \{S_j\mid j\in 2\nnn+2\}$.
Then $\Delta$ is a special subset and $(\calG)_{\Delta}$
fixes $A$ pointwise.  But then the epimorphisms $\vf_{i,j}$
ensure that all of $V$ is fixed pointwise.
\end{proof}

\section{Approximation process}
\label{sec:approximation}

We say that a class $\calQ$ of groups is \emph{HS-closed}
if $\calQ$ is closed under subgroups and homomorphic
images.  (This includes being closed under isomorphisms.)

\begin{obs}
Let $(G_i, \vf_{i,j})_I$ be an \emph{epimorphic} system of
finite groups.  If $\calQ$ is an HS-closed class and infinitely
many of the $G_i$ belong to $\calQ$ then all of them belong
to $\calQ$.     \hfill $\Box$ %% \qedhere
\end{obs}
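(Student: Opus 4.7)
The plan is quite direct: we use the fact that an infinite subset of $I$ is cofinal in $I$ (which is a subset of $\nnn$ with the natural ordering), together with the fact that epimorphisms compose to epimorphisms.

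Fix an arbitrary $k \in I$. Let $J \subseteq I$ be the (infinite) set of indices $i$ with $G_i \in \calQ$. Since $J$ is infinite and $I \subseteq \nnn$, there exists some $j \in J$ with $j \ge k$. Then by hypothesis $\vf_{k,j} \colon G_j \onto G_k$ is surjective, so $G_k$ is a homomorphic image of $G_j \in \calQ$. Since $\calQ$ is closed under homomorphic images, $G_k \in \calQ$. As $k \in I$ was arbitrary, all $G_i$ belong to $\calQ$.

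There is no real obstacle here: only the ``H'' (homomorphic image) half of HS-closure is needed, and the key fact is simply that any infinite subset of $\nnn$ is cofinal, so for every $k$ we can find a larger index at which $G_i$ is known to lie in $\calQ$ and transport membership downward along the epimorphism $\vf_{k,j}$.
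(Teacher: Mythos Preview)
Your proof is correct and is exactly the intended argument: the paper states this observation with only a $\Box$ and no written proof, and the obvious reason is precisely the cofinality argument you give. Indeed, only closure under homomorphic images is needed here, as you note.
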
 

\begin{definition}[Color-reduction between classes of permutation groups]
Let $\calQ$ and $\calR$ be HS-closed classes of finite permutation groups.  
We say that $\calQ$ is \emph{$k$-color-reducible} to $\calR$
if the following holds for all pairs of permutation groups
$G,H\in\calQ$.  If $\vf:H\onto G$ is an epimorphism and
$G\notin \calR$ then there exists a $k$-coloring $\gamma$
for $H$ such that $\vf(H_\gamma) < G$.
\end{definition}

\begin{lemma}[Color-reduction of inverse limits]  \label{lem:reduction}
Let $\calQ$ and $\calR$ be HS-closed classes of finite groups.  
Assume $\calQ$ is $2$-color-reducible to $\calR$.  
Let $(G_i,\vf_{i,j})_I$ be an epimorphic inverse system 
of finite permutation groups $G_i\in \calQ$; let $\Omega_i$ denote
the permutation domain of $G_i$.  Assume the $\Omega_i$ are disjoint.
Let $\calG = \varprojlim G_i$.  Let $J$ be an infinite subset of $I$.
Then there exists a subset $\Delta\subseteq \bigcup_{i\in J} \Omega_i$
such that for all $i\in I$ we have $\pi_i(\calG_{\Delta})\in\calR$.
\end{lemma}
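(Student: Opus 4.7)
The plan is to construct $\Delta$ incrementally by stages, at each stage coloring one more of the domains $\Omega_{j_k}$ so as to strictly shrink a carefully chosen projection. Enumerate $J = \{j_1 < j_2 < \cdots\}$ and set $\mathcal{S}_0 := \calG$. At stage $k$ we choose $\Delta_k \subseteq \Omega_{j_k}$; afterwards put $\Delta := \bigcup_k \Delta_k$ and $\mathcal{S}_k := \calG_{\Delta_1 \cup \cdots \cup \Delta_k}$. Since $\calG$ acts on $\bigsqcup_i \Omega_i$ coordinatewise, each $\Omega_i$ is $\calG$-invariant, so setwise preservation of $\Delta$ is equivalent to simultaneous setwise preservation of each $\Delta_k$; hence $\calG_\Delta = \bigcap_k \mathcal{S}_k$. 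By the compactness of the profinite group $\calG$, $\pi_i(\calG_\Delta) = \bigcap_k \pi_i(\mathcal{S}_k)$, and since $G_i$ is finite this equals the eventual value of the descending sequence $(\pi_i(\mathcal{S}_k))_k$.

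At stage $k$, I would form $B_k := \{i \in I : \pi_i(\mathcal{S}_{k-1}) \notin \calR\}$ and its reachable part $B_k^* := B_k \cap \{i \in I : i \le j_k\}$. If $B_k^* = \emptyset$, set $\Delta_k := \emptyset$. Otherwise let $i_k := \min B_k^*$, and write $H := \pi_{j_k}(\mathcal{S}_{k-1})$ and $K := \pi_{i_k}(\mathcal{S}_{k-1}) = \vf_{i_k,j_k}(H)$. Both $H$ and $K$ lie in $\calQ$ (by HS-closure), and $K \notin \calR$ by the choice of $i_k$. Applying the 2-color-reducibility hypothesis to the epimorphism $\vf_{i_k,j_k}|_H : H \onto K$ yields a 2-coloring $\gamma_k$ of $\Omega_{j_k}$ with $\vf_{i_k,j_k}(H_{\gamma_k}) < K$; take $\Delta_k$ to be one of its color classes. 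Using the coordinatewise action, $\calG_{\Delta_k} = \pi_{j_k}^{-1}((G_{j_k})_{\Delta_k})$, from which $\pi_{j_k}(\mathcal{S}_k) = H_{\gamma_k}$ and therefore $\pi_{i_k}(\mathcal{S}_k) = \vf_{i_k,j_k}(H_{\gamma_k}) < K = \pi_{i_k}(\mathcal{S}_{k-1})$: stage $k$ strictly shrinks the projection at the chosen bad index.

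The quantitative heart of the argument is to show $m_k := \min B_k$ (with $\min\emptyset := \infty$) tends to $\infty$. If some $m \in I$ had $m_k = m$ for infinitely many $k$, then since $j_k \to \infty$ all but finitely many such stages satisfy $j_k \ge m$; at each of these, the minimality of $m$ in $B_k$ forces $i_k = m$, and $\pi_m(\mathcal{S}_k)$ strictly descends in the finite group $G_m$ infinitely often, a contradiction. Given $m_k \to \infty$, for each fixed $i \in I$ we have $\pi_i(\mathcal{S}_{k-1}) \in \calR$ for all sufficiently large $k$, whence $\pi_i(\calG_\Delta) \le \pi_i(\mathcal{S}_{k-1})$ lies in $\calR$ by subgroup closure, as required.

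The main obstacle is the asymmetry of the 2-color-reducibility hypothesis: at stage $k$ it offers a lever only on epimorphisms out of $G_{j_k}$, so bad indices exceeding $j_k$ cannot be attacked directly at that stage. The whole argument hinges on the interplay between two monotone processes---the growth of $j_k$ and the finite descending chain condition inside each $G_i$---to force $m_k \to \infty$. Everything else (the identification $\pi_{j_k}(\mathcal{S}_k) = H_{\gamma_k}$, the passage of intersections through $\pi_i$ via compactness) is routine.
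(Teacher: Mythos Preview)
Your argument is correct and rests on the same mechanism as the paper's proof: repeatedly invoke $2$-color-reducibility to strictly shrink a projection $\pi_i$, and use finiteness of each $G_i$ to guarantee termination. The only difference is scheduling. The paper iterates over the \emph{target} indices $i\in I$ in increasing order and, for each $i$, consumes a finite batch of fresh $J$-indices to drive $G_i$ down into $\calR$ (starting afresh from $F:=G_i$ each time). You instead iterate over the \emph{resource} indices $j_k\in J$ in order, at each step attacking the smallest currently bad target $i_k\le j_k$, and you carry forward the accumulated reductions via the groups $\mathcal S_k$. Your version is thus a bit more economical (previous colorings may already push some $\pi_i$ into $\calR$ before $i$ is ever selected), while the paper's version is more transparently finitary (no appeal to monotonicity of $m_k$ or to compactness is needed, since each $i$ is handled by an explicit finite loop). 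Both organizations implement the same greedy descent.

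One small remark: your compactness line ``$\pi_i(\calG_\Delta)=\bigcap_k\pi_i(\mathcal S_k)$'' is true but unnecessary---your final paragraph already gives the cleaner argument that once $\pi_i(\mathcal S_{k-1})\in\calR$ for some $k$, subgroup-closure of $\calR$ finishes the job.
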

\begin{proof}
For a subset $K\subseteq I$ let us use the notation
$\Omega(K):=\bigcup_{i\in K} \Omega_i$.
For $i\in I$, in increasing order, we shall inductively designate
a finite subset $J_i\subset J$ and a subset 
$\Delta_i\subseteq \Omega(J_i)$ such that the $J_i$ are
disjoint, $i < j$ for all $j\in J_i$, and $\pi_i(\calG_{\Delta_i})\in\calR$.
It is clear then, that $\Delta :=\bigcup_{i\in I} \Delta_i$ accomplishes
our goal.

Suppose the $J_{\ell}$ have already been constructed for $\ell < i$.
Let $K_i$ be the complement in $J$ of the set
$\{t\in J\mid t\le i\}\cup\bigcup \{J_{\ell}\mid \ell < i\}$.
We perform the following algorithm to construct $J_i$ and $\Delta_i$.
The algorithm will gradually reduce $G_i$ until it becomes an
element of $\calR$, using the 2-color-reducibility of $\calQ$ to
$\calR$.  The variable $F$ stores the current
group $G_i$.  The $\nextt(K_i,m)$ operation produces the
smallest element of $K_i$ that is greater than $m$.

\mn
\indent 01 \quad $F :=G_i$\\
\indent 02 \quad $J_i := \emptyset$ \\
\indent 03 \quad $m := 0$ \\
\indent 04 \quad \bwhile\ $F\notin \calR$ \\
\indent 05 \quad \qquad $m:=\nextt(K_i,m)$\\
\indent 06 \quad \qquad $J_i := J_i \cup \{m\}$ \\
\indent 07 \quad \qquad let $\Psi_m \subseteq\Omega_m$ such that \\
\indent 08 \quad \qquad\qquad  $\vf_{im}((\vf_{im}^{-1}(F))_{\Psi_m}) < F$ \\ 
\indent 09 \quad \bend(\bwhile) \\
\indent 10 \quad $\Delta_i := \bigcup_{j\in J_i} \Psi_j$ \\
\indent 11 \quad \breturn\ $J_i$ and $\Delta_i$

\mn
Explanation of lines 07--08.  Both $G_i$ and $G_m$ belong to $\calQ$,
and therefore their subgroups $F$ and $\vf_{im}^{-1}(F)$, resp., also
belong to $\calQ$.  The restriction of the epimorphism
$\vf_{im} : G_m\to G_i$ to $\vf_{im}^{-1}(F)$ is an epimorphism
from $\vf_{im}^{-1}(F)$ onto $F$.  Therefore, if $F\notin \calR$,
by the 2-color-reducibility of $\calQ$ to $\calR$ there exists
$\Psi_m\subseteq \Omega_m$ as required in line 08.

\mn
Since $F$ is reduced in every round of the \bwhile-loop, the
process terminates in a finite number of steps, and on termination,
$F\in\calR$, as desired.
\end{proof}

\mn
We shall use Lemma~\ref{lem:reduction} in each successive
step in a chain of HS-closed classes which we now list.  
\begin{itemize}
\item \quad ${\mathscr{Gr}} = \{\text{ all finite groups }\}$
\item \quad ${\mathscr{Sol}} = \{\text{ all finite solvable groups }\}$
\item \quad ${\mathscr{Der}_k} = \{\text{ all finite solvable groups
    of derived length }\le k\,\}$
\end{itemize}
For some constant $k_0$ we shall descend along the chain
\begin{equation}   \label{eq:chain-of-classes}
 {\mathscr{Gr}} \supset {\mathscr{Sol}} \supset
{\mathscr{Der}_{k_0}} \supset {\mathscr{Der}_{k_0-1}}
\supset \dots \supset {\mathscr{Der}_1} \supset {\mathscr{Der}_0}\,.
\end{equation}

\mn
Note that $\mathscr{Der}_0$ consists only of the
trivial group, so once that class has been reached,
we have found an asymmetric coloring.

So we have reduced the proof of Tucker's Conjecture to the
following result.

\begin{theorem}  \label{thm:program}
  There exists a positive integer $k_0$ such that the
  following holds.\\
  Let $\calQ\supset\calR$ be a pair of consecutive terms in
  the chain~\eqref{eq:chain-of-classes}. Then
  $\calQ$ is 2-color-reducible to $\calR$.
\end{theorem}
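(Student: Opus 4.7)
The proof of Theorem~\ref{thm:program} amounts to verifying $2$-color-reducibility separately for the three types of consecutive inclusion in the chain~\eqref{eq:chain-of-classes}, using in turn the three principal results advertised in the introduction: Theorem~\ref{thm:simple} handles $\mathscr{Gr}\supset\mathscr{Sol}$, Theorem~\ref{thm:bded-orb} handles $\mathscr{Sol}\supset\mathscr{Der}_{k_0}$, and Proposition~\ref{prop:derivedlength-intro} handles each step $\mathscr{Der}_k\supset\mathscr{Der}_{k-1}$ with $1\le k\le k_0$.

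The most substantial step is the first. Given an epimorphism $\vf:H\onto G$ of finite permutation groups with $G$ nonsolvable, the plan is to exploit a nonabelian chief factor of $G$: there exist normal subgroups $N\normal M\normal G$ with $M/N\cong T^{\ell}$ for some nonabelian finite simple group $T$ and some $\ell\ge 1$. Composing $M\onto M/N$ with projection to the first factor gives an epimorphism $M\onto T$. We then set $\tilde{H}:=\vf^{-1}(M)\le\sym(\Omega_H)$ and let $\psi:\tilde{H}\onto T$ denote the composition of $\vf|_{\tilde{H}}$ with $M\onto T$. Applying Theorem~\ref{thm:simple} to the pair $(\tilde{H},\psi)$ yields a subset $\Delta\subseteq\Omega_H$ with $\psi(\tilde{H}_\Delta)<T$. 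I then claim $\vf(H_\Delta)<G$: were $\vf(H_\Delta)=G$, every $m\in M$ would admit a preimage in $H_\Delta$, which would necessarily lie in $\vf^{-1}(M)=\tilde{H}$ and hence in $\tilde{H}_\Delta=H_\Delta\cap\tilde{H}$; this would force $\vf(\tilde{H}_\Delta)=M$ and therefore $\psi(\tilde{H}_\Delta)=T$, contradicting the output of Theorem~\ref{thm:simple}.

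The remaining two reductions are direct. For $\mathscr{Sol}\supset\mathscr{Der}_{k_0}$, define $k_0:=d(C)$ where $C$ is the constant from Theorem~\ref{thm:bded-orb} and $d(n)$ denotes the maximum derived length of a solvable subgroup of $S_n$. Given a solvable epimorphism $\vf:H\onto G$ with $G\notin\mathscr{Der}_{k_0}$, Theorem~\ref{thm:bded-orb} furnishes $\Delta\subseteq\Omega_H$ all of whose $H_\Delta$-orbits have length at most $C$; then $H_\Delta$ embeds into a direct product of solvable subgroups of $S_C$, so its derived length, and hence that of $\vf(H_\Delta)$, is at most $k_0$, forcing $\vf(H_\Delta)<G$. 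For $\mathscr{Der}_k\supset\mathscr{Der}_{k-1}$ with $k\ge 1$, suppose $G$ has derived length exactly $k$; since $H\in\mathscr{Der}_k$ surjects onto $G$, the derived length of $H$ equals $k$ as well, so in particular $H$ is nontrivial. Proposition~\ref{prop:derivedlength-intro} then produces a $2$-coloring $\gamma$ of $\Omega_H$ with $H_\gamma$ of derived length strictly less than $k$; since $\vf(H_\gamma)$ inherits derived length at most $k-1<k$, we conclude $\vf(H_\gamma)<G$.

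I expect the main obstacle to lie in the first step: Theorem~\ref{thm:simple} is stated for a bona fide epimorphism onto a simple group, whereas a nonsolvable $G$ need not admit such an epimorphism directly. The chief-factor device above---passing to the subgroup $\tilde{H}=\vf^{-1}(M)$ of $H$ and using the compatibility $\tilde{H}_\Delta=H_\Delta\cap\tilde{H}$---is what transmits the smallness established inside $T$ back up to the quotient $G$. With this packaging in place, the bulk of the work in the paper lies in actually proving Theorems~\ref{thm:simple} and~\ref{thm:bded-orb} and Proposition~\ref{prop:derivedlength-intro}; the assembly described above is then essentially formal.
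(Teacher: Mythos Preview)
Your proposal is correct and follows the paper's three-step structure. The one substantive difference lies in the first reduction ($\mathscr{Gr}\supset\mathscr{Sol}$): the paper packages this as Theorem~\ref{thm:nonsolvable} and proves it via the \emph{perfect core}. Given $\vf:H\onto G$ with $G$ nonsolvable, the paper notes that $\vf(H^{(\infty)})=G^{(\infty)}$ (Obs.~\ref{obs:inf-epi}), takes an epimorphism $G^{(\infty)}\onto T$, applies Theorem~\ref{thm:simple} to the perfect subgroup $H^{(\infty)}\le\sym(\Omega_H)$, and then uses $L^{(\infty)}\le H^{(\infty)}\cap L$ for $L=H_\Delta$ to push the reduction back up to $G$. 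Your chief-factor argument instead passes to $\tilde{H}=\vf^{-1}(M)$ for a subgroup $M\le G$ admitting an epimorphism onto $T$, and uses $\tilde{H}_\Delta=H_\Delta\cap\tilde{H}$ to transmit the reduction. Both arguments are short and correct; the paper's route has the mild aesthetic advantage of working with a characteristic subgroup, while yours bypasses the perfect-core observation entirely. Note incidentally that your argument does not actually need $M/N$ to be a chief factor, nor $M\normal G$: any subgroup $M\le G$ with a nonabelian simple quotient suffices, and such $M$ exists whenever $G$ is nonsolvable. Your treatment of the remaining two steps matches the paper's (Sections~\ref{sec:bded-derived-length} and~\ref{sec:reducing-derived-length}).
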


The rest of the paper describes the proof of this result.

\section{Definitions, notation: group theory}    \label{sec:defs}
For the rest of this paper, {\bf all groups will be finite},
except where expressly stated otherwise.

We use the notation $[n]=\{1,\dots, n\}$ for integers $n\ge 0$.\\

For groups $G,H$, the notation $H\le G$ indicates that $H$ is
a subgroup, and $H < G$ indicates a proper subgroup.
The notation $N\normal G$ indicates a (not necessarily proper)
normal subgroup.
An \emph{epimorphism} (surjective homomorphism)
from $G$ onto $H$ is indicated as $G\onto H$.
For $K\subseteq G$ a subset of the group $G$ we
denote the centralizer of $K$ in $G$ by $\ccc_G(K)$.
The center of $G$ is $Z(G)= \ccc_G(G)$.

Let $G$ be a group.
The \emph{commutator} of $h,k\in G$ is the element
$[h,k]=h^{-1}k^{-1}hk$.  If $H,K\le G$ then
$[H,K]$ denotes the subgroup generated by all commutators
$[h,k]$ for $h\in H$, $k\in K$. 
The \emph{commutator subgroup} or \emph{derived subgroup} of
the group $G$ is $[G,G]$, also denoted $G'$.  The members of
the \emph{derived series} are denoted $G^{(k)}$ where
$G^{(0)}=G$ and $G^{(k+1)}=(G^{(k)})'$.  A group $G$ is
\emph{perfect} if $G'=G$.  Every finite group $G$ contains
a unique largest perfect subgroup, called the
\emph{perfect core} of $G$, reached when the
derived series stabilizes: $G^{(k+1)}=G^{(k)}$.
We denote the perfect core of $G$ by $G\inff$.\
The group $G$ is solvable if and only if its perfect core is
the identity.

The \emph{socle} of a group $G$, denoted $\soc(G)$,
is the product of its minimal normal subgroups.

An \emph{almost simple} group is a group $G$ of the form
$N\normal G\le \aut(N)$ where $N$ is a nonabelian simple group.
In this case, $N$ is the unique minimal normal subgroup of $G$
and therefore it is the socle of $G$.

The group $\inn(G)$ of inner automorphisms of $G$
consists of the conjugations by elements of $G$.
The \emph{outer automorphism group} of a group $G$
is the quotient $\out(G)=\aut(G)/\inn(G)$.
\emph{Schreier's Hypothesis} states that the
outer automorphism groups of all finite simple groups
are solvable.  This is equivalent to saying that the
perfect core of an almost simple group is simple.
Schreier's Hypothesis is a known consequence of the 
Classification of Finite Simple Groups.

For a group action $G\acts\Omega$ (see Def.~\ref{def:action})
we usually reserve the letter $n$ for $|\Omega|$, the \emph{degree}
of the action.  For additional definitions and notation about group
actions, see Section~\ref{sec:def1}.

\section{Solvable colorings of primitive groups}
\label{sec:solvable}

In this section we build one of our main tools for the
proof of Tucker's conjecture.

\begin{definition}
For a permutation group $G\le \sym(\Omega)$, let $\solv(G)$
denote the minimum number $k$ of colors such that $G$
admits a solvable $k$-coloring.  We call $\solv(G)$ the
\emph{solvable coloring number} of $G$.
\end{definition}

\begin{obs}   \label{obs:ps-closed}
Let $G,H \le\sym(\Omega)$.    
\begin{itemize}
\item[(a)] If $H\le G$ then $\solv(H)\le \solv(G)$.
\item[(b)] Let the orbits of $G$ be $\Omega_1,\dots,\Omega_k$
and let $G_i$ be the restriction of $G$ to $\Omega_i$.
Then $\solv(G)=\max_i \solv(G_i)$.
\item[(c)] For $x\in\Omega$, let $G(x)$ denote the action of
  the stabilizer $G_x$ on $\Omega\setminus {x}$.  Then
  $\solv(G(x))\le \solv(G)$.
\end{itemize}    
\end{obs}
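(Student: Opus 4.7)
The plan is to handle each of the three parts separately, using the basic relation $G_\gamma=\bigcap_c (G)_{\gamma^{-1}(c)}$ between the stabilizer of a coloring and the setwise stabilizers of its color classes.

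For part (a), let $\gamma$ be a solvable $k$-coloring for $G$, so $G_\gamma$ is solvable. Then $H_\gamma=\{h\in H:h(\gamma)=\gamma\}=H\cap G_\gamma$ is a subgroup of the solvable group $G_\gamma$, hence itself solvable. So $\gamma$ already serves as a solvable $k$-coloring for $H$, giving $\solv(H)\le\solv(G)$.

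For part (b), I would prove the two inequalities separately. For $\solv(G)\le\max_i\solv(G_i)$, set $k:=\max_i\solv(G_i)$, choose for each $i$ a solvable $k$-coloring $\gamma_i:\Omega_i\to[k]$ for $G_i$ (padding to the common palette if needed), and paste them to form $\gamma:\Omega\to[k]$ with $\gamma|_{\Omega_i}=\gamma_i$. Since $G$ preserves each orbit, the restriction maps $\rho_i:G\to G_i$ assemble into a faithful embedding $G\hookrightarrow\prod_i G_i$, and $G_\gamma$ maps into $\prod_i(G_i)_{\gamma_i}$ under this embedding; each factor is solvable, the product is solvable, and a subgroup of a solvable group is solvable, so $G_\gamma$ is solvable and $\solv(G)\le k$. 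The reverse direction $\solv(G)\ge\solv(G_i)$ is the step I expect to be the main obstacle. The naive attempt restricts a solvable $k$-coloring $\gamma$ of $G$ to $\Omega_i$, but the stabilizer of $\gamma|_{\Omega_i}$ in $G_i$ equals $\rho_i(H)$, where $H:=\{\pi\in G:\pi|_{\Omega_i}\text{ preserves }\gamma|_{\Omega_i}\}$ may properly contain $G_\gamma$, and its image in $G_i$ can fail to be solvable even when $G_\gamma$ is. A refined construction exploiting the subdirect structure of $G\le\prod_i G_i$ seems needed here.

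For part (c), let $\gamma$ be a solvable $k$-coloring of $G$ and set $\gamma':=\gamma|_{\Omega\setminus\{x\}}$. Every $\pi\in G_x$ fixes $x$ and thus automatically preserves the color $\gamma(x)$; hence $\pi$ preserves $\gamma'$ if and only if $\pi$ preserves $\gamma$ on all of $\Omega$. The preimage of $(G(x))_{\gamma'}$ in $G_x$ is therefore exactly $G_x\cap G_\gamma=(G_\gamma)_x$, so $(G(x))_{\gamma'}$ is the image of $(G_\gamma)_x$ under the restriction epimorphism $G_x\to G(x)$. As a quotient of a subgroup of the solvable group $G_\gamma$, it is solvable, so $\gamma'$ is a solvable $k$-coloring for $G(x)$, yielding $\solv(G(x))\le\solv(G)$.
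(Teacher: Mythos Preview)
Your proofs of (a) and of the inequality $\solv(G)\le\max_i\solv(G_i)$ in (b) are correct and match the paper. Your direct proof of (c) is also correct; the paper instead derives (c) from (a) and (b), but it only needs the trivial instance of (b) in which one orbit is the singleton $\{x\}$, and that instance is exactly what your argument for (c) establishes directly.

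Your suspicion about the reverse inequality in (b) is justified: the equality $\solv(G)=\max_i\solv(G_i)$ is in fact false in general. For $n\ge 9$, let $T=A_n$, let $\Omega_1=[n]$ carry the natural $T$-action and $\Omega_2=T$ the left-regular $T$-action, and let $G\le\sym(\Omega_1\sqcup\Omega_2)$ be the diagonal copy of $T$. The two orbits of $G$ are $\Omega_1$ and $\Omega_2$. Here $G_1$ is $A_n$ in its natural action, so $\solv(G_1)=\lceil n/4\rceil$, while in the regular action the stabilizer of any single point of $\Omega_2$ is trivial, so $\solv(G_2)=2$. Coloring $\Omega_1$ monochromatically and marking a single point of $\Omega_2$ gives $G_\gamma=1$, hence $\solv(G)=2<\lceil n/4\rceil=\max_i\solv(G_i)$. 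The paper's one-line proof of (b) likewise establishes only the $\le$ direction; fortunately that is all that is ever used, so your incomplete treatment of (b) is no worse than the paper's, and your direct route to (c) sidesteps the issue entirely.
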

\begin{proof} (a) A solvable coloring for $G$ is also a solvable
  coloring for any subgroup of $G$.\quad  (b) We can color
  each orbit independently, observing that $G$ is a subdirect
  product of the $G_i$. \quad (c) First, $\solv(G_x)\le\solv(G)$
  by (a).  Second, $\solv(G(x))=\solv(G_x)$ by (b).
\end{proof}
\begin{remark} \label{rem:ps-closed}
This observation would remain true if we replaced ``solvable groups''
by any PS-closed class of groups (closed under direct products
and subgroups) such as nilpotent groups, $2$-groups, groups
with composition factors of order $\le c$ or
solvable groups of derived length $\le c$ for some constant $c$.
\end{remark}

\bigskip

\begin{theorem}[Solvable coloring number of primitive groups]
   \label{thm:prim3col}
Let $G\le \sym(\Omega)$ be a primitive permutation group
of degree $n=|\Omega|$.  
\begin{itemize}
\item[(I)] \ If $G$ is solvable then $\solv(G)=1$.
\item[(II)] \ If $\alt(\Omega)\le G\le \sym(\Omega)$ then
     $\solv(G) = \lceil n/4 \rceil$.
\item[(III)]\   In all other\footnote{In
  a previous version of this paper, I listed $M_{24}$ as a
  possible exception.  I am grateful to
  Saveliy Skresanov~\cite{skresanov}
  for pointing out that the case of $M_{24}$ was settled by
  Chang Choi in 1972~\cite{choi}.  In the previous version I
  proved $\solv(M_{24})\le 3$, which suffices for our main results.}
  cases, $\solv(G)=2$.
\end{itemize}
\end{theorem}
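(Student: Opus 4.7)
Parts (I) and (II) are direct. For (I), the unique $1$-coloring has stabilizer equal to all of $G$, which is solvable by hypothesis, so $\solv(G)=1$. For (II), I would establish both bounds by hand; by (I) we may assume $n\ge 5$, so $\lceil n/4\rceil\ge 2$. \emph{Upper bound:} partition $\Omega$ into $r=\lceil n/4\rceil$ blocks $\Omega_1,\dots,\Omega_r$ of size $\le 4$ and give each its own color. The stabilizer of this coloring in $\sym(\Omega)$ is $\prod_i \sym(\Omega_i)$, a direct product of copies of $\sym(k)$ with $k\le 4$, all solvable; intersecting with $G$ preserves solvability. \emph{Lower bound:} in any coloring with fewer than $\lceil n/4\rceil$ colors, some color class $\Omega_i$ satisfies $|\Omega_i|\ge 5$ by pigeonhole, and then $\alt(\Omega_i)$ (acting trivially on $\Omega\setminus\Omega_i$) embeds in $G_\gamma$ since $\alt(\Omega)\le G$; but $\alt(\Omega_i)\cong A_m$ with $m\ge 5$ is nonsolvable. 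The same trivial-coloring observation yields $\solv(G)\ge 2$ in (III), since a nonsolvable $G$ is not solvably colored by the $1$-coloring.

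\textbf{Part (III), upper bound.} The idea is to split on whether $G$ admits an asymmetric $2$-coloring. If it does, the stabilizer is trivial, hence solvable, so $\solv(G)\le 2$. By the Cameron--Neumann--Saxl theorem (Theorem~\ref{thm:saxl}), only finitely many primitive permutation groups apart from the alternating and symmetric groups fail to admit an asymmetric $2$-coloring, and Seress's classification~\cite{seress97} lists them explicitly (a small catalogue of primitive actions of modest degree: some affine groups, small classical actions, and sporadic examples). For each such exception the plan is to exhibit a subset $\Delta\subseteq\Omega$ whose setwise stabilizer $G_\Delta$ is solvable. A natural recipe is to fix a maximal solvable subgroup $M\le G$, take $\Delta$ to be a union of $M$-orbits, and verify that no nonsolvable overgroup of $M$ in $G$ also stabilizes $\Delta$; equivalently, one aims for $G_\Delta\le M$.

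\textbf{Main obstacle.} I expect the hardest part to be verifying the upper bound for the groups on Seress's exceptional list. The pitfall is that a convenient $\Delta$ built from $M$-orbits may be fixed setwise by some \emph{larger} nonsolvable subgroup of $G$; to rule this out one must tailor $\Delta$ to each exceptional $G$ and inspect the subgroup lattice (or invoke character-table / computer-algebra data) to confirm that $G_\Delta$ lies in a solvable subgroup. The exceptional degrees are small enough that direct verification is feasible, but this case analysis is exactly what binds the argument to Seress's classification and, through it, to the CFSG.
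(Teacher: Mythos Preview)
Your treatment of (I) and (II) matches the paper's (the paper only sketches these, noting that in (II) each color must occur at most four times). For (III), your high-level strategy also coincides with the paper's: reduce via Seress's classification to a finite list of exceptions and then exhibit a solvable $2$-coloring for each. So at the level of architecture you are on target.

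Where you diverge is in the recipe for the exceptional groups. Your suggestion---pick a maximal solvable subgroup $M$, take $\Delta$ to be a union of $M$-orbits, and hope $G_\Delta\le M$---is workable in principle but, as you yourself flag, fragile: verifying that no nonsolvable overgroup stabilizes $\Delta$ requires case-by-case inspection of subgroup lattices. The paper instead relies on a simple observation (Obs.~\ref{obs:ptwise}): if $\Phi$ is $H$-invariant with $|\Phi|\le 4$ and the pointwise stabilizer $H_{(\Phi)}$ is solvable, then $H$ itself is solvable. This converts the problem into finding a small $\Delta$ whose \emph{pointwise} stabilizer has solvable (often very small) order, which is much easier to check. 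The paper then organizes Seress's list structurally rather than group-by-group: all degree-$\le 8$ cases fall at once to any $4$-subset; the affine cases are handled uniformly by explicit constructions in $\fff_p^d$ exploiting preserved affine relations; the projective cases $L_d(q)$ are handled by explicit point sets in $PG(d-1,q)$ exploiting the matroid structure; the four remaining almost-simple cases $(10,A_5)$, $(10,A_6)$, $(12,M_{11})$, $(15,A_8)$ and the Mathieu groups yield to short order computations (with $M_{24}$ settled by citing Choi). This organization avoids any appeal to subgroup lattices or computer search and makes the case analysis short and self-contained.
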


Items (I) and (II) are straightforward.  (Regarding (II),
each color must occur at most 4 times.)
We discuss the Mathieu groups in Sec.~\ref{sec:mathieu}.  
The bulk of Section~\ref{sec:solvable} concerns the proof
of item (III).  %%(IV).

It was shown by Cameron, Neumann, and Saxl~\cite{saxl} in 1984
by a simple counting argument that all but a finite number of
primitive groups admit an asymmetric 2-coloring.  Gluck~\cite{gluck}
(1983) and Seress~\cite{seress97} (1997) classified the exceptions:
Gluck for the case when $G$ is solvable and Seress for 
the non-solvable case.  Seress gives their combined
list of 43 groups~\cite[Theorem 2]{seress97}.
We shall rely on Seress's list of non-solvable
exceptions.

\begin{remark}
For the proof of our main result, solvable colorings 
of almost simple primitive groups are not required.
We include this part of the result for completeness.
In particular, the hard part of Seress's result,
the classification of those almost simple primitive
groups that admit an asymmetric 2-coloring, is not
required.
\end{remark}

We begin with a straightforward but useful observation.

\begin{obs}  \label{obs:ptwise}
Let $H\le \sym(\Omega)$ be a permutation group
and $\Phi\subseteq\Omega$ an $H$-invariant subset
such that 
(a) the image of the action $H\acts\Phi$
is solvable and (b) the pointwise stabilizer 
$H_{(\Phi)}$ is solvable.
Then $H$ is solvable.  In particular, condition (a)
is met if $|\Phi|\le 4$.  \hfill $\Box$ 
\end{obs}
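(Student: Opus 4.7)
The plan is to recognize this as a routine extension-of-solvable-by-solvable argument applied to the restriction homomorphism.

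First I would consider the action of $H$ on the invariant set $\Phi$, which gives a homomorphism $\rho : H \to \sym(\Phi)$. By definition, the kernel of $\rho$ is exactly the pointwise stabilizer $H_{(\Phi)}$, and the image $\rho(H)$ is precisely the image of the action $H \acts \Phi$. Thus we have a short exact sequence
\[
  1 \to H_{(\Phi)} \to H \to \rho(H) \to 1.
\]
By hypothesis (b), the kernel $H_{(\Phi)}$ is solvable, and by hypothesis (a), the quotient $\rho(H)$ is solvable. Since an extension of a solvable group by a solvable group is solvable (i.e., the class of finite solvable groups is closed under extensions), it follows that $H$ itself is solvable.

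For the ``in particular'' clause, note that $\rho(H)$ is a subgroup of $\sym(\Phi)$. If $|\Phi| \le 4$, then $\sym(\Phi)$ is a subgroup of $S_4$, which is solvable (its derived series terminates at $V_4$, then the trivial group, in a couple of steps). Hence every subgroup of $\sym(\Phi)$ is solvable, so condition (a) holds automatically in this regime.

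There is no real obstacle here; the statement is essentially the observation that ``solvable'' is an extension-closed property combined with the standard first-isomorphism-theorem identification of the kernel and image of the restriction map to an invariant subset. The only mild subtlety is remembering that $H_{(\Phi)}$ is a normal subgroup of $H$ (which is automatic as the kernel of $\rho$), so that the quotient $H/H_{(\Phi)} \cong \rho(H)$ makes sense.
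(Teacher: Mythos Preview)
Your proof is correct and is precisely the standard argument the paper has in mind; the paper gives no proof at all (the statement ends with a $\Box$), indicating it regards this extension-of-solvable-by-solvable observation as routine.
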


First we consider three particular classes of primitive groups:
affine groups, the projective linear groups, and the
Mathieu groups.  Our proof for these classes is self-contained
and does not rely on Seress's work.  (See the footnote for $M_{24}$.)

\subsection{Affine groups}
\label{sec:affine2col}
\begin{prop}
Let $G$ be a primitive permutation group with an
elementary abelian normal subgroup.  Then $G$ admits
a solvable 2-coloring.
\end{prop}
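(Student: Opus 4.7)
The plan is to use the structure theorem for primitive groups with an abelian socle. Since $G$ is primitive and contains a nontrivial elementary abelian normal subgroup $V$, primitivity forces $V$ to be the unique minimal normal subgroup of $G$; being abelian and transitive, $V$ acts regularly on $\Omega$. We may therefore identify $\Omega$ with $V \cong \fff_p^d$ and fix the base point $0 \in V$. Writing $H$ for the stabilizer $G_0$, we have $G = V \semidirect H$ with $H \le \mathrm{GL}(V)$ acting (faithfully and) irreducibly on $V$.

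The first step is a structural reduction. For any subset $\Delta \subseteq V$, the setwise stabilizer $G_\Delta$ fits in a short exact sequence
\[
1 \longrightarrow V \capp G_\Delta \longrightarrow G_\Delta \stackrel{\pi}{\longrightarrow} \pi(G_\Delta) \longrightarrow 1,
\]
where $\pi: G \to G/V \cong H$ is the quotient map. Since $V \capp G_\Delta$ is a subgroup of the abelian group $V$, the group $G_\Delta$ is solvable if and only if its image $\pi(G_\Delta) \leq H$ is solvable. A direct calculation identifies
\[
\pi(G_\Delta) = \{h \in H : h\Delta = \Delta + u \text{ for some } u \in V\},
\]
i.e.\ the stabilizer in $H$ of the $V$-translation class $[\Delta] \in 2^V/V$. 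So it suffices to exhibit some $\Delta \subseteq V$ whose translation class has solvable, or even trivial, $H$-stabilizer.

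The main computation is a random coloring argument. For a uniformly random $\Delta \subseteq V$ and any nontrivial affine permutation $\sigma = (u, h) \in V \semidirect H$ with $h \neq 1$, the fixed-point set of $\sigma$ is the solution set of $(h - \mathrm{id})x = -u$, an affine subspace of dimension $\leq d - 1$ and hence of cardinality at most $|V|/p$. Consequently $\sigma$ has at most $(|V| + |V|/p)/2$ orbits on $V$, so
\[
\Pr[\sigma(\Delta) = \Delta] \;\leq\; 2^{-|V|(1 - 1/p)/2}.
\]
A union bound over the at most $|V|(|H| - 1)$ such elements $\sigma = (u, h)$ produces a $\Delta$ with trivial $H$-stabilizer on $[\Delta]$ whenever $|V|(1 - 1/p)/2 > \log_2|H| + \log_2|V|$; using $|H| \leq |\mathrm{GL}(d, \fff_p)| < p^{d^2}$ and $|V| = p^d$, this holds for all sufficiently large $|V|$.

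The main obstacle is the finite collection of small-degree affine primitive groups for which the counting bound above fails. These can be dispatched case by case: whenever $H$ is itself solvable (in particular whenever $d = 1$) the coloring $\Delta = \{0\}$ already gives $G_\Delta = H$ solvable, finishing those cases at once; the remaining small affine primitive groups with nonsolvable $H$ form a short explicit list that can either be checked by hand or subsumed by Seress's classification~\cite{seress97} of primitive groups admitting no asymmetric 2-coloring. A more careful union bound that exploits the typical codimension of $\ker(h - \mathrm{id})$ for $h \in H \setminus \{1\}$ should remove essentially all of the residual case analysis.
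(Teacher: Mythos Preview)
Your approach is genuinely different from the paper's. The paper gives explicit constructions of $\Delta$ for each dimension $d$: for $d=2$ a two-point set whose pointwise stabilizer is triangular; for $3\le d\le 6$ a set containing a unique affine-dependent quadruple; for $d\ge 7$ a chain of such quadruples whose incidence graph has solvable automorphism group. Every case is handled directly, and the argument is entirely self-contained (no appeal to Seress or to counting).

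Your probabilistic argument is correct for large $|V|$, and the reduction via $\pi(G_\Delta)$ is clean. The gap is in the small cases. Your appeal to Seress does not close it: Seress's list consists of primitive groups \emph{without} an asymmetric $2$-coloring, so for affine groups on that list you still owe a solvable $2$-coloring, and that is exactly what is being asked. Concretely, with the crude bound $|H|<p^{d^2}$ your union bound fails for $\agl(d,2)$ up to roughly $d=8$; the cases $d=3,4,5$ (degrees $8,16,32$) lie within Seress's range and have nonsolvable $H$, so neither the counting nor the ``$H$ solvable'' shortcut applies, and you have not checked them. Your closing remark that a sharper union bound ``should remove essentially all of the residual case analysis'' is a hope, not an argument.

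What each approach buys: your method is conceptually uniform and would generalize to other target classes (e.g., $2$-group stabilizer) with no extra work in the large regime, but it always leaves a finite residue to be cleared by hand. The paper's explicit constructions require more invention per case but deliver a complete, self-contained proof with no dependence on Seress for this proposition---a point the paper stresses, since elsewhere the main theorem already leans on Seress.
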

\begin{proof}
We need to find a subset $\Delta\subseteq\Omega$
such that $G_{\Delta}$ is solvable.  

\mn
Let $N$ be the elementary abelian normal subgroup; so
$n=|N|=|\Omega|=p^d$ for some prime $p$ and $d\ge 1$.
$\Omega$ can be viewed as the $d$-dimensional vector 
space over $\fff_p$, and $G\le\agl(d,p)$ (the affine
group acting on $\Omega$).  

\mn
If $d=1$ then $G$ is solvable, so $\Delta=\emptyset$ will do.

\mn
Let now $d\ge 2$.  Let $e_0=0$ and let $e_1,\dots,e_d$
be a basis of $\Omega$.

\mn
If $d=2$ then let $\Delta=\{e_0,e_1\}$.
The pointwise stabilizer 
$G_{(\Delta)}$ consists of linear transformations
of $\Omega$, described by triangular matrices (in the basis $\{e_1,e_2\}$),
hence $G_{(\Delta)}$ is solvable.  An application of
Obs.~\ref{obs:ptwise} shows that $G_{\Delta}$ is solvable.

\mn
For $d\ge 3$ we observe that
$G$ preserves affine relations, \ie, relations of the form
$\sum \alpha_i x_i=0$ where $x_i\in\Omega$, $\alpha_i\in\fff_p$,
and $\sum \alpha_i=0$.

\mn
By a quadruple we shall mean a set of four elements.
We shall say that quadruple $Q\subseteq\Omega$
satisfies the equation $f(x_1,x_2,x_3,x_4)=0$
if there is a bijection $\beta: \{1,2,3,4\}\to Q$
such that $f(\beta(1),\beta(2),\beta(3),\beta(4))=0$.

\mn
If $3\le d \le 6$ then let 
$\Delta = \{e_0,e_1,e_2,e_1+e_2, e_3, \dots, e_d\}$.
There is exactly one quadruple of
elements of $\Delta$ satisfying the equation
$x_1+x_2-x_3-x_4=0$, namely, $\{e_1,e_2,e_0,e_1+e_2\}$.
This means $G_{\Delta}$ fixes this quadruple (setwise)
and also the set $\{e_3,\dots,e_d\}$ (setwise).
On the other hand, $G_{(\Delta)}=1$.  So an application
of Obs.~\ref{obs:ptwise} to $G_{\Delta}$
shows that $G_{\Delta}$ is solvable.

\mn
Assume $d\ge 7$ and let $k=\lfloor (d-1)/2\rfloor$.
Let $\Delta = \{(-1)^{i+1} e_i\mid 0\le i\le d\}\cup
\{e_{2i-1}+e_{2i}+e_{2i+1}\mid 1\le i\le k\}$.
Consider the set $\calH$ of those quadruples
in $\Delta$ that satisfy the equation
$x_1-x_2+x_3-x_4=0$.  These are exactly the quadruples
$Q_i:=\{e_{2i-1}, -e_{2i}, e_{2i+1}, e_{2i-1}+e_{2i}+e_{2i+1}
\mid 1\le i\le k\}$.

Let us now consider the graph with vertex set $\Delta$ where
two vertices are adjacent if there is a quadruple in $\calH$
in which both of them participate.  This graph is invariant
under $G_{\Delta}$.  The graph has one or two isolated
vertices ($e_0$ and, if $d$ is even, $e_d$) 
and otherwise consists of a chain of
4-cliques, each one sharing one vertex with the next one.
The automorphism group of this graph is easy to determine;
it has a normal $2$-subgroup of index $9$,
therefore it is solvable.  On the other hand, $G_{(\Delta)}=1$,
so $G_{\Delta}$ acts faithfully on our graph and therefore
it is solvable.
\end{proof}

\subsection{Projective linear groups}
\label{sec:proj2col}
\begin{prop}
For $d\ge 2$ and a prime power $q$, the projective linear group
$L_d(q)$ in its natural action on the $(d-1)$-dimensional projective
space over $\fff_q$ admits a solvable 2-coloring.  
\end{prop}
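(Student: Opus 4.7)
The plan is to exhibit an explicit $(2d-1)$-element subset $\Delta$ of $\mathrm{PG}(d-1,q)$ whose setwise stabilizer in $\mathrm{PGL}(d,q)$---and hence in the subgroup $L_d(q)$---is solvable. Let $e_1,\dots,e_d$ denote the standard basis of $\fff_q^d$ and set
\[
\Delta := \{[e_1],\dots,[e_d]\}\cup\{[e_i+e_{i+1}] : 1\le i\le d-1\}.
\]
First I would verify that the pointwise stabilizer of $\Delta$ in $\mathrm{PGL}(d,q)$ is trivial: a representative linear map $T$ fixing each $[e_i]$ must satisfy $T(e_i)=\lambda_i e_i$, and then fixing each $[e_i+e_{i+1}]$ forces $\lambda_i=\lambda_{i+1}$ for every $i$, so $T$ is a scalar.

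The heart of the argument is the structural claim that the collinear triples contained in $\Delta$ are exactly the $d-1$ obvious ones, $T_i:=\{[e_i],[e_{i+1}],[e_i+e_{i+1}]\}$ for $1\le i\le d-1$. I would prove this by a short case analysis on the ``types'' (basis point vs.\ consecutive-sum point) of the three chosen points, using that the support of any nonzero scalar multiple of an element of $\Delta$ is either a singleton or a pair of consecutive indices. One must be a little careful in small characteristic, especially $q=2$, to rule out incidental coincidences among sums of basis vectors; this is the combinatorial step I expect to require genuine attention and is the main obstacle.

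Granted this enumeration, let $H$ be the 3-uniform hypergraph on $\Delta$ with hyperedges $T_1,\dots,T_{d-1}$. Since projective transformations preserve incidence, every element of $\mathrm{PGL}(d,q)_\Delta$ induces an automorphism of $H$, and by the previous paragraph this induced action is faithful. The hypergraph $H$ is a ``caterpillar'': the spine $a_1,\dots,a_d$ (where $a_i:=[e_i]$) is linked via pendants $b_i:=[e_i+e_{i+1}]$ attached to consecutive spine pairs. For $d\ge 4$ the vertices $a_2,\dots,a_{d-1}$ are distinguished as the degree-$2$ vertices of $H$, so the spine is recoverable; a direct inspection shows that $\aut(H)$ is generated by the spine reflection $a_i\leftrightarrow a_{d+1-i}$, $b_i\leftrightarrow b_{d-i}$, together with the two end-swaps $(a_1\,b_1)$ and $(a_d\,b_{d-1})$, yielding $D_4$ of order $8$. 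For $d=3$ one fixes $a_2$ (the unique degree-$2$ vertex) and the remaining four points split into the pair $\{a_1,b_1\}$ and the pair $\{a_3,b_2\}$, which may be independently swapped or exchanged, again giving a group of order $8$; for $d=2$, $H$ is a single triple and $\aut(H)=S_3$. In all cases $\aut(H)$ is solvable, so $\mathrm{PGL}(d,q)_\Delta$ embeds into a solvable group and is therefore itself solvable, and hence so is its subgroup $L_d(q)_\Delta$.
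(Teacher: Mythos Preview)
Your argument is correct and follows essentially the same strategy as the paper: take $\Delta$ to consist of the projective basis points together with the consecutive-sum points, observe that the pointwise stabilizer is trivial, and bound the setwise stabilizer via the hypergraph of collinear triples inside $\Delta$. Your version is in fact a bit tidier than the paper's: for $d\ge 4$ the paper adjoins the extra point $[e_1+\cdots+e_d]$, which creates a spurious collinear triple when $d=4$ and forces a separate degree analysis, and it handles $d=2,3$ by unrelated ad hoc choices; your single construction works uniformly for all $d\ge 2$ with no exceptional collinear triples (the case check you flag goes through unchanged in characteristic~2).
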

\begin{proof}
For $d=2$, the stabilizer of any point is the $1$-dimensional
affine group, which is solvable.

For $d\ge 3$, let $e_i$ denote the standard unit vectors in
$\fff_q^d$ (the $i$-th coordinate $1$, the other coordinates $0$).
For $v\in \fff_q^d\setminus\{0\}$ we write 
$[v]=\{\lambda v\mid \lambda\in\fff_q^{\times}\}$ for the equivalence
class representing a point in $PG(d-1,q)$ by its homogeneous coordinates.

For $d=3$, let $\Delta=\{[e_1], [e_2], [e_3], [e_1+e_2+e_3]\}$.
The pointwise stabilizer of $\Delta$ in $N := L_3(q)$ is the identity;
therefore, the setwise stabilizer is $N_{\Delta}\le \sym(\Delta)$
which is solvable.

For $d\ge 4$, let
$\Delta=\{[e_i]\mid 1\le i\le d\}\cup
        \{[e_i+e_{i+1}]\mid 1\le i\le d-1\}\cup
        \{[f]\}$
where $f=\sum_{i=1}^d e_i\}$.
The pointwise stabilizer of $\Delta$ is the idenity, so we
only need to consider what permutations of $\Delta$ are
feasible under $L_d(q)$.  The action of $L_d(q)$ preserves
the underlying matroid (\ie, it maps linearly independent
sets to linearly independent sets).

The only linearly dependent triples in $\Delta$ are the triples
of the form $\{[e_i], [e_{i+1}], [e_i+e_{i+1}]\}$, and in
the case of $d=4$, the triple $\{[e_1+e_2], [e_3+e_4], [f]\}$.

So the 3-uniform hypergraph $\calH$ formed by these triples is preserved
by $L_d(q)$.  Let us find the degree of each element of $\Delta$
in this hypergraph (\ie, how many times each element of $\Delta$
appears in these triples).  Let
$\Delta_j = \{u\in\Delta\mid \deg_{\calH}(u)=j\}$.
Each $\Delta_i$ is fixed (setwise) by $N_{\Delta}$.

For $d=4$ we have $\Delta_1=\{[e_1], [e_4], [e_2+e_3], [f]\}$
and $\Delta_2 =\{[e_2], [e_3], [e_1+e_2], [e_3+e_4]\}$.
Therefore $N_{\Delta}\le \sym(\Delta_1)\times \sym(\Delta_2)$, solvable.

For $d\ge 5$ we have $\Delta_0=\{[f]\}$,\
$\Delta_1 = \{[e_1], [e_d]\}\cup \{[e_i+e_{i+1}]\mid 1\le i\le d-1\}$,
and $\Delta_2 = \{[e_2],\dots,[e_{d-1}]\}$.
Let us define the graph $R$ on vertex set $\Delta$ by making 
$u, v\in\Delta$ adjacent if $u\neq v$ and $\{u,v\}$ is a
subset of a triple in $\calH$.  The induced subgraph $R[\Delta_2]$
is the path $[e_2]--\dots--[e_{d-1}]$, which has only 2 automorphisms,
so the pointwise stabilizer of $\Delta_2$ has index $\le 2$ in
$N_{\Delta}$.  Moreover, this pointwise stabilizer also fixes
each point that has two neighbors in this path, so it
can only swap the pair $([e_1],[e_1+e_2])$ and the pair
$([e_d],[e_{d-1}+e_d])$.  In sumary, the order of
$N_{\Delta}$ divides $8$, so $N_{\Delta}$ is solvable.
\end{proof}          

\subsection{Mathieu groups}
\label{sec:mathieu}

\begin{prop}[Choi]  \label{prop:mathieu}
Each of the five Mathieu groups admits a solvable 2-coloring.
\end{prop}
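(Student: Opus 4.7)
The plan is to handle the five Mathieu groups case by case, exhibiting for each one a concrete subset $\Delta$ of the natural domain $\Omega$ and arguing that the setwise stabilizer $G_\Delta$ is solvable. The governing principle is Observation~\ref{obs:ptwise}: to conclude that $G_\Delta$ is solvable, it suffices to produce a $G_\Delta$-invariant subset $\Phi\subseteq\Omega$ with $|\Phi|\le 4$ (so that the action of $G_\Delta$ on $\Phi$ is automatically solvable), together with a solvable pointwise stabilizer $(G_\Delta)_{(\Phi)}$. In practice I would arrange for $(G_\Delta)_{(\Phi)}$ to be trivial by making $\Delta\setminus\Phi$ (or the points it canonically determines in $\Omega\setminus\Delta$) contain a base of $G$.

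For the four smaller Mathieu groups $M_{11}, M_{12}, M_{22}, M_{23}$ (of degrees $11,12,22,23$), this should be straightforward because their base sizes are very small. I would pick $\Delta$ as a modestly-sized subset whose combinatorial description with respect to the associated Steiner system---counts of incidences with blocks containing various subsets of $\Delta$, in the spirit of the ``degree in a hypergraph'' argument used for $L_d(q)$ in Section~\ref{sec:proj2col}---forces a distinguished invariant subset $\Phi\subseteq\Delta$ of at most four elements, while $\Delta\setminus\Phi$ serves as a base for $G_{(\Phi)}$. Obs.~\ref{obs:ptwise} then yields solvability.

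The main case, and the only one requiring serious work, is $M_{24}$; here I would rely on Choi's 1972 construction~\cite{choi}. The natural arena is the Steiner system $S(5,8,24)$ preserved by $M_{24}$: every 5-subset lies in a unique octad, the complement of an octad carries the affine geometry $AG(4,2)$, and the orbits of $M_{24}$ on small configurations (pairs of octads of various intersection sizes, sextets, dodecads) are well documented. One assembles $\Delta$ from a few such ingredients so that the uniqueness statements in $S(5,8,24)$ force $M_{24,\Delta}$ to preserve a canonical subset $\Phi$ of size $\le 4$ whose pointwise stabilizer acts trivially on the remaining points of $\Delta$; Obs.~\ref{obs:ptwise} then closes the argument.

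The main obstacle is $M_{24}$ itself. The naive single-feature candidates for $\Delta$---an octad, a dodecad, or a sextet---all have non-solvable setwise stabilizers (respectively $2^4{:}A_8$, $M_{12}{:}2$, and $2^6{:}3{\cdot}S_6$), so $\Delta$ must combine several pieces of Steiner data in order to break these maximal subgroups apart. Carrying out this delicate combinatorial bookkeeping inside $S(5,8,24)$ is precisely the content of Choi's paper, and my plan for this case would essentially be to reproduce or invoke his verification.
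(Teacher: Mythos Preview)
Your plan is sound in outline and agrees with the paper on the hard case $M_{24}$: both you and the paper simply invoke Choi's classification~\cite{choi} (the paper points specifically to Choi's sets $8'''$ and $10'''$, with setwise stabilizers $2^4{:}S_4$ and $S_3\times S_4$). There is no way around this; as you note, the obvious one-feature candidates all have non-solvable stabilizers.

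For the four smaller Mathieu groups, however, you are working much harder than necessary. The paper's argument avoids Steiner-system combinatorics entirely and simply takes $\Delta$ itself to have size at most~$4$ (so $\Phi=\Delta$ in your notation). Writing $G=M_{m+k}$ with $m\in\{10,21\}$ and $k\in\{1,2\}$, one chooses $|\Delta|=k+2\le 4$; the high transitivity of $G$ then gives $|G_{(\Delta)}|=8$ (for $m=10$) or $48$ (for $m=21$), so $G_{(\Delta)}$ is solvable by order alone, and Obs.~\ref{obs:ptwise} finishes immediately. Your approach---building a larger $\Delta$ whose block-incidence pattern singles out a $4$-element $\Phi$ and individualizes the remaining points---would also work, but it imports machinery (and a verification burden) that the sharp-transitivity count renders unnecessary. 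The payoff of the paper's route is that the small cases become a two-line order computation rather than a combinatorial case analysis.
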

\begin{proof}
We give a simple direct proof in the cases other than
$M_{24}$ and refer Choi~\cite{choi} for $M_{24}$.

Let $G$ be one of the Mathieu groups
$M_{23}$, $M_{22}$, $M_{12}$ and $M_{11}$.
So we can write 
$G=M_{m+k}$ where $m\in\{10, 21\}$ and $k=1,2$.
Let $G$ act on $\Omega$ where $|\Omega|=m+k$.
Let $\Delta\subset\Omega$ be a set of $2+k$ elements,
so $|\Delta|\le 4$.
Then the order of the pointwise stabilizer of $\Delta$ is 
$|G_{(\Delta)}|=48$ if $m=21$ and $8$ if $m=10$.
Therefore $G_{(\Delta)}$ is solvable, so by
Obs.~\ref{obs:ptwise}, $G_{\Delta}$ is solvable.

The remaining case, $G=M_{24}$, was settled by
Chang Choi in 1972~\cite{choi}.
Choi classified all setwise stabilizers of $M_{24}$.
He found a set of size 8
he denotes by $8'''$ such that $G_{8'''}$ has an
elementary abelian normal subgroup of order 16
with quotient $S_4$ \cite[Prop. 4.1]{choi}.
He also found a set of size 10 he denotes by $10'''$
such that $G_{10'''}\cong S_3\times S_4$ \cite[Prop. 6.3]{choi}.
\end{proof}

\begin{remark}  \label{rem:mathieu}
A set equivalent to $10'''$ was found by 
Saveliy Skresanov\footnote{Saveliy V. Skresanov,
Sobolev Institute of Mathematics, Novosibirsk.}~\cite{skresanov}
using the GAP computer algebra system, thus providing
independent verification of the fact that $\solv(M_{24}=2$.
Saveliy kindly agreed that I share his code.

\small{
\begin{verbatim}
gap> G := MathieuGroup(24);
Group([ (1,2,3,4,5,6,7,8,9,10,11,12,13,14,15,16,17,18,19,20,21,22,23),
(3,17,10,7,9)(4,13,14,19,5)(8,18,11,12,23)(15,20,22,21,16), (1,24)(2,23)
   (3,12)(4,16)(5,18)(6,10)(7,20)(8,14)(9,21)(11,17)(13,22)(15,19) ])
gap> S := [1..10];
[ 1 .. 10 ]
gap> StructureDescription(Stabilizer(G, S, OnSets));
"S4 x S3"
\end{verbatim}
}
\end{remark}

\begin{remark}
Of course $\solv(M_{24})=2$ implies $\solv(M_{23})=\solv(M_{22})=2$
(see Obs.~\ref{obs:ps-closed}),  
so those observations should also be attributed to Choi.

Let us note that $\solv(M_{23})=2$ immediately implies
$\solv(M_{24})\le 3$, which suffices for our main results.
Indeed, even the weaker statement $\solv(M_{24})\le 5$
would suffice: the only place in the proof of
Theorem~\ref{thm:simple1} (and consequently in the
proof of Theorem~\ref{thm:main})
where a bound on $\solv(M_{24})$ is used
is Case~2a of the proof of Theorem~\ref{thm:simple1}.
So the main results do not depend on Choi's
classification theorem.
\end{remark}

\subsection{Proof of Theorem~\ref{thm:prim3col}}

We refer to Seress's list of primitive groups that do not
admit an asymmetric 2-coloring~\cite[Theorem 2]{seress97}.
Seress lists 43 groups (this includes Gluck's list
of solvable exceptions); we organize the list into
four categories.
We write $n=|\Omega|$ for the degree of $G$.

\begin{theorem}[Seress]  \label{thm:seress}
Let $G\le\sym(\Omega)$ be primitive, $G\ngeq \alt(\Omega)$.
Assume $G$ does not admit an asymmetric 2-coloring.
Then $n\le 32$ and $G$ falls into one of the following categories.
\begin{itemize}
\item[(a)]  $G$ is solvable,
\item[(b)]  $G$ is an affine group, \ie, $G$ has an elementary
  abelian normal subgroup (so $n$ is a prime power),
\item[(c)]  $G$ has degree $n\le 8$\,,
\item[(d)]  $G$ is almost simple (so $N\le G\le \aut(N)$ for some
  nonabelian simple group $N$).
\end{itemize}
\end{theorem}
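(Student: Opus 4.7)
The plan is to combine the Cameron--Neumann--Saxl counting argument (Prop.~\ref{prop:motionlemma}) with the O'Nan--Scott classification of primitive permutation groups and CFSG-based bounds on the minimal degree. The counting lemma gives the following sufficient condition for the existence of an asymmetric 2-coloring: for a fixed $\sigma\in G$, the number of 2-colorings preserved by $\sigma$ is $2^{c(\sigma)}\le 2^{n-\mu(\sigma)/2}$, where $c(\sigma)$ counts the cycles of $\sigma$ (including fixed points), so by the union bound an asymmetric 2-coloring exists as soon as $|G|<2^{\mu(G)/2}$. The goal is to show that once $n>32$ and $G$ escapes categories (a)--(d), this inequality holds.

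By O'Nan--Scott, a primitive group with nonabelian socle is of almost simple, product action, diagonal, or twisted wreath type. Categories (a), (b), (d) respectively absorb the solvable, affine, and almost simple groups; the work is to rule out the product action, diagonal, and twisted wreath types once $n>32$. In each of these remaining types the socle is $T^k$ with $k\ge 2$ and $T$ nonabelian simple, which forces $n\ge 60^{k-1}$ in the diagonal case, $n\ge 5^k$ in the product case, and similarly large values in the twisted wreath case. The product structure yields a linear lower bound $\mu(G)\ge cn$ (a non-identity element acts nontrivially on a positive fraction of coordinates), while the Liebeck bound $|G|\le n^{1+\lfloor\log_2 n\rfloor}$ gives polynomial order; the resulting inequality $n^{O(\log n)}<2^{cn/2}$ holds comfortably for $n>32$, so these types contribute no exceptions.

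There remains the task of proving the global bound $n\le 32$ within categories (b) and (d), since the theorem itself places no a priori restriction on the degree of affine or almost simple exceptions. For affine groups $G\le\agl(d,p)$ of degree $n=p^d$, one argues via the induced action of the point stabilizer $G_0\le\mathrm{GL}(d,p)$ on $\fff_p^d\setminus\{0\}$: a sharpened counting argument, together with Gluck's classification of solvable primitive exceptions and an explicit analysis of $G_0$-orbits on vectors, forces $n\le 32$. For almost simple $G$ with socle $N$, one invokes the Liebeck--Saxl lower bounds on $\mu(G)$ across the families of simple socles (alternating, sporadic, classical, exceptional Lie type), which give $\mu(G)\ge\alpha n$ for an absolute constant $\alpha>0$ outside known short lists; combined with Liebeck's order bound this again forces $n\le 32$ up to a finite residue.

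The main obstacle, and the reason this part of the paper relies on CFSG, is the almost simple case: one must handle a long tail of primitive actions of classical groups on subspaces and on cosets of small-index subgroups, where $\mu(G)$ can come close to the counting threshold. The finite residue is then eliminated one group at a time, typically by computer calculation of setwise stabilizers over all small subsets; this is where Seress's original argument produces the explicit list of $43$ groups underlying the stated bound $n\le 32$. I would present the tree of O'Nan--Scott cases, invoke the Liebeck--Saxl minimal-degree estimates and Liebeck's order bound as black boxes, and defer the finite computer check to the tables in~\cite{seress97}.
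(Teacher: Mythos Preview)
The paper does not prove this theorem. Theorem~\ref{thm:seress} is quoted from Seress~\cite[Theorem~2]{seress97}; the paper's contribution here is only to reorganize Seress's explicit list of 43 exceptional primitive groups into the four stated categories (a)--(d). There is therefore no ``paper's own proof'' to compare your sketch against.

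That said, your sketch contains a genuine technical error worth flagging. You assert that for the product action, diagonal, and twisted wreath types ``the product structure yields a linear lower bound $\mu(G)\ge cn$.'' This is false for product action. Take $G=S_m\wr S_2$ in its product action on $[m]^2$, so $n=m^2$. An element $(\tau,1;\mathrm{id})$ with $\tau$ a transposition moves exactly $2m$ points, so $\mu(G)=2m=2\sqrt{n}$, which is sublinear. Worse, the Motion Lemma then gives nothing: $|G|=2(m!)^2$ grows much faster than $2^{\mu/2}=2^m$, so the counting inequality $|G|<2^{\mu/2}$ fails for all $m$. The same issue arises for the triangular action $S_m\wr S_2$ on unordered pairs and more generally whenever the base of the product action is a large symmetric or alternating group in its natural action. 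These groups genuinely require a direct construction of an asymmetric $2$-coloring (e.g., for the $m\times m$ grid, encode the adjacency matrix of an asymmetric graph on $[m]$), not a counting argument. Seress handles product action by such explicit constructions combined with induction on the base group, not by a blanket minimal-degree bound.
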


In Seress's list, Case (d) falls into the following subcategories.
The list indicates each group $G$ by the pair
$(n,N)$ where $n$ is the degree of $G$ (size of $|\Omega|$)
and $N$ is the (unique, simple) minimal normal subgroup of $G$.

\begin{itemize}
\item[(i)]  Mathieu groups in their natural action,
       $(n,M_n)$ for $n=11, 12, 22, 23, 24$
\item[(ii)]  projective linear groups $L_d(q)$ for some pairs
       $(d,q)$ where $d\ge 2$ and $q$ is a prime power,
       in their natural action on the $(d-1)$-dimensional
       projective space over $\fff_q$  $(n=(q^d-1)/(q-1))$
\item[(iii)] $(10,A_5)$, $(10, A_6)$, $(12, M_{11})$, $(15, A_8)$.
\end{itemize}
\begin{proof}[Proof of Theorem~\ref{thm:prim3col}]\quad \\
Given a primitive group $G\le \sym(\Omega)$ such that
$G\ngeq\alt(\Omega)$, we need to show that $G$ admits
a solvable 2-coloring, except if $G=M_{24}$ in its natural
action then we provide a solvable 3-coloring.

\mn
Case (o).\  If $G$ admits an asymmetric 2-coloring, that
is more than sufficient for us.  Now we need to eliminate
the finite number of exceptions.

\mn
In Case (a), the constant coloring $(\Delta=\emptyset)$
works.

\mn
Case (b).\  This case was settled in Sec.~\ref{sec:affine2col}.

\mn
Case (c).\ Now $5\le |\Omega|\le 8$.  Let $\Delta$ be any
4-subset of $\Omega$.  Then, applying Obs.~\ref{obs:ptwise}
to $H:= G_{\Delta}$, it follows that $G_{\Delta}$ is solvable. 

\mn
Case (d).\ Now $G$ is almost simple, so it has a
unique minimal normal subgroup $N$ which is 
nonabelian simple.  In particular, the quotient 
$G/N \le\out(N)$ is solvable by Schreier's
hypothesis.  It follows that it suffices
to find $\Delta\subseteq\Omega$ such that $M:=N_{\Delta}$
is solvable.  Indeed, let $L:=G_{\Delta}$.  Now $L$ is
solvable because $L/(L\capp N)$ is a subgroup of $G/N$.

\mn
Subcase (i) was settled in Section~\ref{sec:mathieu}.

\mn
Subcase (ii).\ 
In Section~\ref{sec:proj2col} we have shown that all projective
linear groups, in their natural action, admit solvable
2-colorings.

\mn
Subcase (iii).

\mn
$A_5$ is a minimal simple group, so all proper subgroups
are solvable; we can take any nontrivial subset of $\Omega$
for $\Delta$.

\mn
For $(10,A_6)$, the stabilizer of a point has order $36$
and is therefore solvable.

\mn
$A_8\cong L_4(2)$, and Seress's example $(15,A_8)$ is the
standard action of $L_4(2)$ on $PG(3,2)$, covered under Subcase (ii).

\mn
In the case of $G\cong M_{11}$ acting as a primitive group on
a set of size $|\Omega|=12$, take any four elements, $x,u,v,w\in \Omega$
such that the successive pointwise stabilizers strictly decrease:
$G > G_x > G_{(x,u)} > G_{(x,u,v)} > G_{(x,u,v,w)}$.  Let
$\Delta =\{x,u,v,w\}$.  Now $|G_x|=|G|/12=660=2^2\cdot 3\cdot 5\cdot 11$.
At each subsequent step, the order drops by at least a prime
factor, in total by at least a factor of $2\cdot 2\cdot 3=12$,
so the order of the pointwise stabilizer of $\Delta$ is 
$|G_{(\Delta)}|\le 660/12=55$.
Therefore $G_{(\Delta)}$ is solvable.  By Obs.~\ref{obs:ptwise},
it follows that $G_{\Delta}$ is solvable.
\end{proof}

\section{Proof of Theorem~\ref{thm:simple} for primitive groups}
\label{sec:primitive}

We restate this case.
\begin{lemma}
Let $G\le\sym(\Omega)$ be a primitive group, 
$T$ a nonabelian simple group, and $\vf : G\onto T$
an epimorphism.  Then there exists a subset
$\Delta\subseteq \Omega$ such that 
$\vf(G_{\Delta}) < T$.
\end{lemma}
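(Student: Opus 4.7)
The plan is to reduce to the existence of a solvable coloring, furnished by Theorem~\ref{thm:prim3col}. The key observation is that since $T$ is nonabelian simple, $T$ is not solvable, so every solvable subgroup of $T$ is automatically proper. Consequently it suffices to produce a subset $\Delta\subseteq\Omega$ such that $G_{\Delta}$ is solvable: then $\vf(G_{\Delta})\le T$ is solvable, and therefore $\vf(G_{\Delta}) < T$, as required.

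I will split into cases according to whether $G$ contains $\alt(\Omega)$. First assume $G\not\ge\alt(\Omega)$. Then $G$ itself cannot be solvable---a solvable group has no nonabelian simple quotient---so case (III) of Theorem~\ref{thm:prim3col} applies and yields $\solv(G)=2$. The $2$-coloring witnessing this is precisely a subset $\Delta\subseteq\Omega$ with $G_{\Delta}$ solvable, and the reduction above finishes this case.

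In the remaining case $G\in\{\alt(\Omega),\sym(\Omega)\}$, write $n=|\Omega|$. For $n\le 4$ both groups are solvable, contradicting the existence of $\vf$; so $n\ge 5$. The normal subgroups of $\sym(\Omega)$ are then $1$, $\alt(\Omega)$, and $\sym(\Omega)$, with quotients $\sym(\Omega)$, $\zzz/2$, and $1$, none of which is nonabelian simple; hence $G=\sym(\Omega)$ is impossible, so $G=\alt(\Omega)$. Thus $G$ is simple, $\vf$ is an isomorphism onto $T$, and for any $x\in\Omega$ the setwise stabilizer $G_{\{x\}}=G_x$ is a proper subgroup of $G$ by transitivity, giving $\vf(G_{\{x\}}) < T$. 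All of the technical weight of the argument therefore sits in Theorem~\ref{thm:prim3col}---and, through Seress's classification, in CFSG; there is no further obstacle beyond invoking that result correctly.
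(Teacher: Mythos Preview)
Your proof is correct. Both your argument and the paper's hinge on the same observation---that a solvable $G_\Delta$ cannot surject onto the nonsolvable $T$---and both invoke Theorem~\ref{thm:prim3col}(III) to produce such a $\Delta$. The difference is in the case split. You carve off only the case $G\ge\alt(\Omega)$ and handle it by inspecting the normal subgroup lattice of $\sym(\Omega)$, whereas the paper carves off the broader class of \emph{almost simple} $G$ and dispatches that case via Schreier's Hypothesis (which forces $\ker\vf=1$, so any nontrivial $\Delta$ works). The paper's split has a deliberate payoff, flagged in the remark following its proof: by treating all almost simple $G$ directly, it never needs Theorem~\ref{thm:prim3col} for almost simple groups, thereby sidestepping the hardest part of Seress's classification. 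Your route trades this economy for a different one: you avoid Schreier's Hypothesis in the exceptional case, at the cost of leaning on the full strength of Theorem~\ref{thm:prim3col}.
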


\begin{proof}

\noindent
Case 1. \ $G$ is almost simple.  
In this case we claim that for any nontrivial subset 
$\Delta\subseteq\Omega$ ($\Delta\neq\emptyset$ and
$\Delta\neq\Omega$) we have $\vf(G_{\Delta}) < T$.

\mn
Indeed, in this case $G$ has a unique minimal normal 
subgroup $N$ which is nonabelian simple.  In particular,
the quotient $G/N=\out(N)$ is solvable by Schreier's
hypothesis.  Let $\ker(\vf)=K$.
Since $G/K\cong T$, it follows that $K\ngeq N$ and
therefore $K=1$, hence $G=N\cong T$.  So for any
nontrivial $\Delta$ we have $|G_\Delta| < |G|=|T|$ and
therefore $\vf(G_{\Delta}) < T$.

\mn
Case 2.\ $G$ is not almost simple.  In particular,
$G\ngeq \alt(\Omega)$.
In this case the result is an immediate consequence of
Theorem~\ref{thm:prim3col}.  Indeed, now we are in
Case (III) %% IV 
of Theorem~\ref{thm:prim3col}, so
$G$ admits a solvable 2-coloring, \ie, there is a subset
$\Delta\subseteq\Omega$ such that $G_{\Delta}$ is solvable,
and therefore $G_{\Delta}$ has no epimorphism on $T$.
\end{proof}

\begin{remark}
Note that while this proof rests on Seress's classification
of the primitive groups that do not admit an asymmetric 2-coloring,
it avoids any reference to the most difficult part of
Seress's work, the classification of the almost simple groups.
\end{remark}

\section{Reducing the image: Proof of Theorem~\ref{thm:simple}}
\label{sec:general2simple}

\subsection{Three lemmas}

The following lemma may be folklore.  
It appears as \cite[Lemma 8.1.1]{quasipoly}
along with an elegant proof, supplied by P\'eter P. P\'alfy
and reproduced below for completeness.
As remarked there, the result can also be derived
from \cite[Lemma 2.8]{meierfrankenfeld}.

\begin{lemma}[Subdirect product lemma] \label{lem:subdirect}
Let $G$ be a subdirect product of the finite groups $H_i$
($i=1,\dots,r$).
Let $\pi_i : G\onto H_i$ be the corresponding projections.
Let $\vf: G\onto T$ be an epimorphism, where
$T$ is a nonabelian simple group.  Let $K=\ker(\vf)$
and $M_i=\ker(\pi_i)$.
Then $(\exists i)(M_i\le K)$.
\end{lemma}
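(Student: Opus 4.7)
My plan is to prove the lemma by induction on $r$, leveraging the elementary fact that two normal subgroups with trivial intersection must centralize each other.

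The base case $r=1$ is immediate: a subdirect product of one group is an isomorphism, so $M_1 = 1 \le K$. For the inductive step, assume the result for subdirect products of fewer than $r$ groups. Set $N = \bigcap_{i=2}^{r} M_i$. Since $G$ is a subdirect product, $\bigcap_{i=1}^{r} M_i = 1$, hence $M_1 \cap N = 1$. Both $M_1$ and $N$ are normal in $G$, so a standard commutator calculation gives $[M_1, N] = 1$; that is, $N$ centralizes $M_1$.

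Now apply $\vf$. The images $\vf(M_1)$ and $\vf(N)$ are normal subgroups of $T$, and they centralize each other. Since $T$ is nonabelian simple, $Z(T) = 1$, so these two normal subgroups cannot both equal $T$; by simplicity, at least one of them is trivial. If $\vf(M_1) = 1$, then $M_1 \le K$ and we are done. Otherwise $\vf(N) = 1$, so $N \le K$. In this case, $\vf$ factors through the quotient as $\tilde\vf : G/N \onto T$, and I claim $G/N$ is a subdirect product of $H_2, \ldots, H_r$. Indeed, for $i \ge 2$ we have $N \le M_i$, so the projections $\pi_i : G \onto H_i$ descend to surjections $\tilde\pi_i : G/N \onto H_i$, and $\bigcap_{i \ge 2} \ker(\tilde\pi_i) = \bigcap_{i \ge 2} M_i / N = N/N = 1$. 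The inductive hypothesis, applied to this subdirect product of $r-1$ groups together with the epimorphism $\tilde\vf$, yields some $i \ge 2$ with $M_i/N \le \ker(\tilde\vf) = K/N$, i.e., $M_i \le K$.

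There is no real obstacle here: the whole argument hinges on the centralizing property of normal subgroups with trivial intersection plus the simplicity-and-nonabelianness of $T$, which forces one of $\vf(M_1), \vf(N)$ to vanish. The only point requiring a small verification is that the quotient $G/N$ is again a subdirect product of the type demanded by the inductive hypothesis, which follows directly from $N \le M_i$ for $i \ge 2$.
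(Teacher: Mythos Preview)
Your proof is correct. The induction is well-organized, and the key step---that $\vf(M_1)$ and $\vf(N)$ are normal subgroups of $T$ centralizing each other, hence cannot both be all of $T$---is justified cleanly. The passage to $G/N$ in the case $\vf(N)=1$ is handled properly.

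Your approach differs genuinely from the paper's. The paper (following P\'alfy) argues by contradiction in one shot, with no induction: assuming $M_i \nleq K$ for all $i$, maximality of $K$ gives $M_iK = G$, and then an iterated-commutator identity
\[
[G,\dots,G] = [M_1K,\dots,M_rK] \le K\,[M_1,\dots,M_r] \le K\Bigl(\bigcap_i M_i\Bigr) = K
\]
forces $G/K\cong T$ to be nilpotent of class $<r$, a contradiction. P\'alfy's argument is shorter and in fact yields a slightly stronger conclusion (nilpotency of the quotient under the contradictory hypothesis), but it requires the commutator manipulation $[M_1K,\dots,M_rK]\le K[M_1,\dots,M_r]$, which not every reader will find transparent. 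Your inductive route uses only the elementary fact that normal subgroups with trivial intersection commute, together with $Z(T)=1$; it is a bit longer but arguably more self-explanatory.
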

\begin{proof}[Proof by P\'eter P. P\'alfy]
For subgroups $G_1,\dots,G_k\le G$ we use the
notation
\[ [G_1,\dots,G_k] =[\dots [[G_1,G_2],G_3],\dots, G_k]\,.\]
Assume for a contradiction that
$K \ngeq M_i$ for all $i$.   Then $M_iK = G$ (because $K$ is a maximal
normal subgroup).   It follows that
$[G,\dots,G]=[M_1K,\dots,M_mK]\le K[M_1,\dots,M_m]\le
   K\left(\bigcap_{i=1}^m M_i\right) = K$,
so $[G/K,\dots,G/K]=1$, a contradiction because $G/K\cong T$ is
nonabelian simple.
\end{proof}

\begin{obs}[Perfect core lemma]    \label{obs:core}
Let $\vf: G\onto H$ be an epimorphism of finite groups.
Then $\vf(G')=H'$.
Consequently, $\vf(G\inff)=H\inff$.
\hfill $\Box$ 
\end{obs}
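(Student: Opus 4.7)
The plan is to prove the first assertion directly from generators of the commutator subgroup, and then to bootstrap the second assertion by induction along the derived series, using the finiteness of $G$ (and hence of $H$) to reach a common index at which both derived series have stabilized.

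First I would verify $\vf(G')=H'$. The subgroup $G'$ is generated by the commutators $[g_1,g_2]$ with $g_1,g_2\in G$, and for any homomorphism $\vf$ we have the identity $\vf([g_1,g_2])=[\vf(g_1),\vf(g_2)]$. Hence $\vf(G')\le H'$. Conversely, since $\vf$ is surjective, every generator $[h_1,h_2]$ of $H'$ can be written as $[\vf(g_1),\vf(g_2)]=\vf([g_1,g_2])\in\vf(G')$, so $H'\le\vf(G')$.

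Next I would promote this to the whole derived series by induction on $k$, showing that $\vf(G^{(k)})=H^{(k)}$ for every $k\ge 0$. The base case $k=0$ is just $\vf(G)=H$. For the inductive step, observe that the restriction $\vf|_{G^{(k)}}\colon G^{(k)}\to H^{(k)}$ is an epimorphism by the inductive hypothesis; applying the first paragraph to this restricted epimorphism gives $\vf(G^{(k+1)})=\vf((G^{(k)})')=(H^{(k)})'=H^{(k+1)}$.

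Finally, since $G$ is finite, the derived series $G\ge G'\ge G''\ge\dots$ stabilizes at some $G^{(k_0)}=G\inff$, and similarly the derived series of $H$ stabilizes at some $H^{(k_1)}=H\inff$. Choosing any $k\ge\max(k_0,k_1)$, the equality $\vf(G^{(k)})=H^{(k)}$ becomes $\vf(G\inff)=H\inff$, as required. There is no serious obstacle here; the only point that warrants care is checking that the inductive step genuinely reduces to the $k=0$ case applied to $\vf|_{G^{(k)}}$, which in turn requires the surjectivity provided by the inductive hypothesis.
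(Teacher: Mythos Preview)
Your proof is correct. The paper itself gives no proof for this observation (it is stated with a terminal $\Box$ and left to the reader), and your argument is exactly the natural one: establish $\vf(G')=H'$ from the commutator identity and surjectivity, induct along the derived series, and use finiteness to pick an index beyond both stabilization points.
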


\begin{lemma}[Three normal subgroups lemma]   \label{lem:3normal}
%% H = G_1, A=N, B=K_1, C=L=(G_1)_{(B_1)}
Let $H$ be a finite group and $A,B,C$ three normal subgroups
satisfying the following conditions.
\begin{itemize}
\item[(i)] $AB=AC=BC=H$. 
\item[(ii)] $H/A$ and $H/B$ are nonabelian simple.
\end{itemize}
Then $B\ngeq A\capp C$.
\end{lemma}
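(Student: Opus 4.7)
The plan is a proof by contradiction: assume that $A\capp C\le B$, and derive a contradiction via the subdirect embedding $\psi:H\to H/A\times H/B$ defined by $h\mapsto (hA,hB)$.

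First I would record the basic properties of $\psi$. It is surjective because $AB=H$, and its kernel is $A\capp B$. Moreover $\psi(A)=1\times H/B$, $\psi(B)=H/A\times 1$, and $\psi(C)$ projects onto each factor because $AC=BC=H$, making $\psi(C)$ a subdirect product. The hypothesis $A\capp C\le B$ gives $A\capp C\le A\capp B=\ker\psi$, hence $\psi(A\capp C)=1$. A short computation (if $\psi(a)=\psi(c)$ then $ac^{-1}\in\ker\psi\le A$, so $c\in A\capp C$) yields $\psi(A)\capp\psi(C)=\psi(A\capp C)=1$. Consequently the first-factor projection is injective on $\psi(C)$ and still surjective, so $\psi(C)$ is the graph of a surjective homomorphism $\alpha:H/A\to H/B$; simplicity of $H/A$ forces $\alpha$ to be an isomorphism. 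The graph description then gives $\psi(B)\capp\psi(C)=1$ automatically, and the same computation (now applied to $B$ in place of $A$) shows $\psi(B\capp C)=1$, i.e., $B\capp C\le A\capp B\le A$. Thus the hypothesis is symmetric: setting $D:=A\capp B\capp C$, we obtain $A\capp C=B\capp C=D$.

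Next I would exploit the standard fact that two normal subgroups of a group whose intersection is trivial commute elementwise (for $a\in A/D$, $c\in C/D$, the commutator $[a,c]$ lies in both $A/D$ and $C/D$). Applied in $H/D$, this makes $C/D$ commute with $A/D$ and with $B/D$; since $(A/D)(B/D)=H/D$ (from $AB=H$), the subgroup $C/D$ is central in $H/D$, in particular abelian. On the other hand, $A/D$ and $C/D$ are normal with trivial intersection and product $H/D$ (from $AC=H$), so $H/D=(A/D)\times(C/D)$; quotienting by $A/D$ then gives $C/D\cong (H/D)/(A/D)\cong H/A$, which is nonabelian simple. This contradicts the abelianness of $C/D$.

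The hard part will be the first step: upgrading the one-sided hypothesis $A\capp C\le B$ to the symmetric conclusion $B\capp C\le A$. It is precisely the simplicity of $H/A$ (and of $H/B$, used through the induced surjection $\alpha$) that forces $\psi(C)$ to be a genuine diagonal rather than a larger subdirect product. After that symmetry is in hand, the structure of $H/D$ as a direct product and the nonabelian-versus-abelian clash produce the contradiction essentially mechanically.
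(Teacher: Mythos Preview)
Your proof is correct and follows essentially the same strategy as the paper's: establish the symmetric conclusion $B\cap C\le A$, then use that normal subgroups with trivial intersection centralize each other to force $C/D$ to be simultaneously central (hence abelian) and isomorphic to the nonabelian simple group $H/A$. The only cosmetic difference is that the paper first passes to $H/(A\cap B\cap C)$ and obtains $B\cap C=1$ directly from the simplicity of $C\cong H/A$, whereas you reach the same symmetry via the subdirect embedding $\psi:H\to H/A\times H/B$ and the graph description of $\psi(C)$.
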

\begin{proof}
Let $S=H/A$ and $T=H/B$.  So $S$ and $T$
are nonabelian simple groups.

Without loss of generality we may assume $A\capp B\capp C=1$.

Assume for a contradiction that $B\ge A\capp C$.  So
$1 = A\capp B\capp C=A\capp C$.

Since $A\capp C=1$ and $AC=H$, we have $H=A\times C$.
Therefore $C\cong H/A\cong S$ and $A\le \ccc_H(C)$.

We claim that $B\capp C=1$.  Indeed, given that
$C\cong S$ is simple, the alternative would be
$C\le B$.  But this would mean $H=BC=B$,
impossible by Assumption (ii).

Since $B\capp C=1$, we have $B\le \ccc_H(C)$.

Combining this with $A\le \ccc_H(C)$ we obtain that
$H=AB\le \ccc_H(C)$, \ie, $C\le Z(H)$.  
But then $C$ must be abelian,
contradicting the isomorphism $C\cong S$.
\end{proof}

\subsection{The proof of Theorem~\ref{thm:simple}}\quad

\mn
We restate and augment Theorem~\ref{thm:simple}.

\begin{theorem}   \label{thm:simple1}
Let $G\le \sym(\Omega)$, where $\Omega$ is a finite set.
Let $\vf: G\onto T$ be an
epimorphism where $T$ is a nonabelian simple group.  Then 
$(\exists \Delta \subseteq\Omega)(\vf(G_\Delta) < T)$.
Moreover, $\Delta$ can be chosen
to be a subset of one of the orbits of $G$.
\end{theorem}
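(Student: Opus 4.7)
With the primitive case settled in Section~\ref{sec:primitive}, I proceed by induction on $|\Omega|$. For the intransitive case, let $\Omega_1,\dots,\Omega_r$ be the $G$-orbits ($r\ge 2$), let $G^{(i)}\le\sym(\Omega_i)$ be the orbit-restrictions, and let $\pi_i:G\to G^{(i)}$ be the natural projections; then $G$ embeds subdirectly in $\prod_iG^{(i)}$. The Subdirect Product Lemma (Lemma~\ref{lem:subdirect}) applied to $\vf:G\onto T$ yields some $i$ with $\ker\pi_i\le\ker\vf$, so $\vf$ factors as $\vf^{(i)}\circ\pi_i$ with $\vf^{(i)}:G^{(i)}\onto T$. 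The inductive hypothesis (with $|\Omega_i|<|\Omega|$) produces $\Delta\subseteq\Omega_i$ in a single $G^{(i)}$-orbit (which is a single $G$-orbit) with $\vf^{(i)}(G^{(i)}_\Delta)<T$; since $\pi_i(G_\Delta)\le G^{(i)}_\Delta$, this pulls back to $\vf(G_\Delta)<T$.

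Suppose now $G$ is transitive. If $G$ is primitive, the primitive case applies. Otherwise, fix a maximal nontrivial $G$-invariant block partition $\mathcal{B}$, so that $\bar G\le\sym(\mathcal B)$ is primitive, and let $N=\ker(G\to\bar G)$. If $N\le\ker\vf$, then $\vf$ descends to $\bar\vf:\bar G\onto T$; the primitive case on $\bar G$ yields $\bar\Delta\subseteq\mathcal B$ with $\bar\vf(\bar G_{\bar\Delta})<T$, and $\Delta:=\bigcup_{B\in\bar\Delta}B$ satisfies $\vf(G_\Delta)=\bar\vf(\bar G_{\bar\Delta})<T$ because $G_\Delta$ is the full $G$-preimage of $\bar G_{\bar\Delta}$. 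Otherwise $\vf(N)=T$ by simplicity of $T$. Since $N$ fixes each block setwise, $N$ embeds subdirectly in $\prod_{B\in\mathcal B}\rho_B(N)$ via the block-restrictions $\rho_B:N\to\sym(B)$. The kernels $\ker\rho_B$ form a single $G$-conjugacy class of normal subgroups of $N$, while $\ker\vf\capp N$ is $G$-invariant, so Lemma~\ref{lem:subdirect} forces $\ker\rho_B\le\ker\vf$ for every $B$. Hence $\vf|_N$ factors through each $\rho_B$ as an epimorphism $\tilde\vf_B:\rho_B(N)\onto T$, and the inductive hypothesis applied to $\rho_B(N)\le\sym(B)$ (with $|B|<|\Omega|$) produces $\Delta\subseteq B$ in one $\rho_B(N)$-orbit with $\tilde\vf_B(\rho_B(N)_\Delta)<T$, equivalently $\vf(N_\Delta)<T$. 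Nonemptiness of $\Delta\subseteq B$ forces $G_\Delta\le G_B$, and $N\normal G$ forces $N_\Delta\normal G_\Delta$, so $\vf(N_\Delta)\normal\vf(G_\Delta)\le T$.

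Simplicity of $T$ together with $\vf(N_\Delta)<T$ leaves two possibilities: either $\vf(G_\Delta)<T$ (and we are done) or the degenerate case $\vf(N_\Delta)=1$, in which $\vf|_{G_\Delta}$ factors through $G_\Delta/N_\Delta\hookrightarrow\bar G_B$. Ruling out the degenerate case is the principal obstacle. When $\bar G$ is solvable, $\bar G_B$ and hence $G_\Delta/N_\Delta$ are solvable, so $\vf(G_\Delta)$ is a solvable subgroup of the nonabelian simple group $T$, therefore proper, which finishes this subcase. When $\bar G$ is nonsolvable, I expect to invoke the Three Normal Subgroups Lemma (Lemma~\ref{lem:3normal}) applied inside $G$ with $A=\ker\vf$, $B=N$, and a third normal subgroup $C$ obtained by pulling back a nonabelian simple quotient of $\bar G$---together with the Perfect Core Lemma (Obs.~\ref{obs:core}) to arrange that the relevant quotients are nonabelian simple as Lemma~\ref{lem:3normal} requires---to derive a contradiction in the remaining configurations.
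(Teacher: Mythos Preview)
Your intransitive case and the case $N\le\ker\vf$ match the paper and are fine. The divergence begins when $\vf(N)=T$: you confine $\Delta$ to a single block $B$ and reduce via $\vf(N_\Delta)\normal\vf(G_\Delta)$. Your treatment of the degenerate case $\vf(N_\Delta)=1$ is correct when $\bar G$ is solvable, but the remaining nonsolvable case is where the real work lies, and your plan does not close it.

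The proposed invocation of Lemma~\ref{lem:3normal} in $G$ with $A=\ker\vf$, $B=N$ does not fit the hypotheses: the lemma needs $H/A$ and $H/B$ both nonabelian simple, whereas $G/N=\bar G$ is an arbitrary primitive group. Even after you massage the quotients, the conclusion $B\not\ge A\cap C$ says nothing about $G_\Delta$ for your particular $\Delta$. More concretely, nothing in your argument prevents $\bar G_B$ from having $T$ as a section (take, e.g., $\bar G=A_7$, $T\cong A_6\cong\bar G_B$), so you cannot conclude $\vf(G_\Delta)<T$ from $G_\Delta/N_\Delta\hookrightarrow\bar G_B$ alone. The paper does \emph{not} keep $\Delta$ inside a single block in this situation. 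In Case~2a (whenever $\bar G$ admits a solvable $5$-coloring, which by Theorem~\ref{thm:prim3col} is every primitive $\bar G$ except $A_k,S_k$ with $k\ge 21$) it spreads $\Delta$ across \emph{all} blocks, encoding the $5$-coloring of $\mathcal B$ by $|\Delta\cap B_i|$ while making $|\Delta\cap B_1|$ unique; this forces $NG_\Delta/N$ solvable, and then the Perfect Core Lemma pushes $(G_\Delta)^{(\infty)}$ into $N$, where the inductively chosen $\Delta_1\subseteq B_1$ finishes it. Only in Case~2b ($\bar G\in\{A_k,S_k\}$, $k\ge 21$) does the paper work inside $G_{B_1}$ and apply Lemma~\ref{lem:3normal}---and there with $H=G_{B_1}$, $A=N$, $B=K\cap G_{B_1}$, $C=(G_{B_1})_{(B_1)}$, exploiting that $G_{B_1}/N\cong A_{k-1}$ is simple. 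Both of these ideas---the multi-block encoding of a solvable coloring, and the specific choice of ambient group and normal subgroups for Lemma~\ref{lem:3normal}---are missing from your plan and are not recoverable from the stated outline.
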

\begin{proof}   %% [Proof of Theorem~\ref{thm:simple}]
Let $n=|\Omega|$.
We fix $T$ and proceed by induction on $n$.
The statement is vacuously true if $G$ is solvable;
in particular, if $n\le 4$. Assume now $n\ge 5$.

Let $K=\ker(\vf)$.

1. First assume $G$ is intransitive, with orbits 
$\Omega_1,\dots,\Omega_{r}$.  Let $H_i$ denote
the restriction of $G$ to $\Omega_i$, so $H_i\le\sym(\Omega_i)$
and $G$ is a subdirect product of the $H_i$.
Let $M_i$ denote the kernel of the epimorphism $\pi_i:G\onto H_i$,
\ie, the pointwise stabilizer of $\Omega_i$ in $G$.
By the Subdirect product lemma (Lemma~\ref{lem:subdirect}), there 
is an $i\le r$ such that $M_i\le K$. This means that $\vf$ induces 
an epimorphism $\vfbar: H_i\onto T$.  
Note that $\vf$ is
the composition of $\pi_i$ and $\vfbar$\,:

\begin{equation}    \label{eq:orbits}
\xymatrix{
  G \ar[rr]^{\vf} \ar[dr]^{\pi_i} & & T \\
    &   H_i \ar[ur]^{\vfbar}
}
\end{equation}

By induction on $n$,
we find a subset $\Delta\subseteq \Omega_i$ such that 
$\vfbar((H_i)_\Delta) < T$.  
By restricting $G$ to $\Omega_i$ we see that
$(H_i)_\Delta=(\pi_i(G))_\Delta = \pi_i(G_\Delta)$.

\mn
Now by diagram~\eqref{eq:orbits},
$\vf(G_\Delta)=\vfbar(\pi_i(G_\Delta))=\vfbar((H_i)_\Delta) < T$,
 as desired.

\mn
 
 2. Assume now that $G$ is transitive, imprimitive.  
Let $\calB=\{B_1,\dots,B_k\}$
be a minimal system of imprimitivity (the $B_i$ are maximal blocks
of imprimitivity).  So the induced action
$\psi: G \acts\calB$ is primitive.

Let $N$ be the kernel of this action; so $N$ fixes each block $B_i$ 
(setwise) and we can naturally identify $\psi(G)$ with $G/N$.

\mn
Case 1.\quad $K\ge N$.  In this case we have the commutative diagram

\begin{equation}    \label{eq:case1}
\xymatrix{
  G \ar[rr]^{\vf} \ar[dr]^{\psi} & & T \\
    &   G/N \ar[ur]^{\vfbar}
}
\end{equation}

So we have the epimorphism $\vfbar : G/N \onto T$
and $G/N$ is a primitive subgroup of $\sym(\calB)$.
Therefore, by the primitive case of Theorem~\ref{thm:simple},
proved in Section~\ref{sec:primitive}, there exists 
$\Deltabar\subseteq \calB$
such that $\vfbar((G/N)_{\Deltabar}) < T$.

Let $\Delta\subseteq\Omega$ be any subset that,
for all $i$, intersects $B_i$ if and
only if $B_i\in\Deltabar$.  Then
$\psi(G_{\Delta})\le (G/N)_{\Deltabar}$ and therefore
$\vf(G_{\Delta})\le \vfbar((G/N)_{\Deltabar}) < T$
and we are done.

\mn
Case 2.\quad $K\ngeq N$.  Since $K$ is a maximal normal subgroup of $G$,
this is equivalent to saying that $KN=G$ and therefore
$N/(N\capp K)\cong G/K \cong T$, so $\vf(N)=T$.
In particular it follows that $N$ is not solvable and
therefore

\begin{equation}  \label{eq:block5}
  |B_i|\ge 5\,.
\end{equation}
  
\mn
Moreover, $G/N = \psi(G)=\psi(KN)=\psi(K)\psi(N)=\psi(K)$.

\mn
Let $N_i$ denote the restriction of $N$ to $B_i$, so $N_i\le\sym(B_i)$.
So $N$ is a subdirect product of the $N_i$.  Let $M_i$ denote the
kernel of the epimorphism $\rho_i : N\onto N_i$, \ie, the pointwise
stabilizer of $B_i$ in $N$.  By the Subdirect product lemma
(Lemma~\ref{lem:subdirect}), there  is an $i\le k$ such that
\begin{equation}  \label{eq:Mi}
  M_i\le K\,.
\end{equation}  
This means that $\vf$ induces an epimorphism
$\sigma_i: N_i\onto T$. 
Let $\wt\vf: N\onto T$ denote the restriction of $\vf$ to $N$.
Note that $\wt\vf$ is the composition of $\rho_i$ and $\sigma_i$:

\begin{equation}    \label{eq:case2}
\xymatrix{
  N \ar[rr]^{\wt\vf} \ar[dr]^{\rho_i} & & T \\
    &   N_i \ar[ur]^{\sigma_i}
}
\end{equation}

We may assume $i=1$.

(Actually, given that $G$ acts transitively on the blocks and
therefore on the $M_i$, we in fact have that $M_i \le K$ for all $i$,
hence the diagram~\eqref{eq:case2} holds for all $i$.  But we shall
not need this fact.)

Since $N \le G_{B_1}$, we have $\vf(G_{B_1})=T$.

We now split the proof into two cases.

\mn
Case 2a. 
\quad The action $G/N\acts \calB$ admits
a solvable 5-coloring.

\mn
By Theorem~\ref{thm:prim3col}, this case covers all primitive groups
except the alternating groups $A_k$ in their natural action for $k \ge 21$.
(Actually, we can claim a solvable 3-coloring in all these cases
except $A_k$ for $k\ge 13$.  We shall not use the full force of
Theorem~\ref{thm:prim3col}.)

\mn
By definition, in this case there exists a 5-coloring
$\gamma_0 : \calB\to [5]$ such that the group $(G/N)_{\gamma_0}$
is solvable.  Let $\gamma_1$ be the lifting of $\gamma_0$
to $\Omega$, \ie, for $x\in B_i$ we set $\gamma_1(x)=\gamma_0(B_i)$.
So under $\gamma_1$, each block is monochromatic.
Moreover, $\psi(G_{\gamma_1})=(G/N)_{\gamma_0}$.
Therefore, $N\cdot G_{\gamma_1}/N$ is solvable.

\mn
Let now $\Delta_1\subseteq B_1$ be such that
$\sigma_1((N_1)_{\Delta_1}) < T$.
Such $\Delta_1$ exists by induction on $n$.  Let $t:=|\Delta_1|$.
Let $\beta : [5] \to \{0,1,\dots,5\}\setminus \{t\}$
be an injection.  
For $i=2,\dots,k$ let $\Delta_i\subseteq B_i$
be an arbitrary subset of size $\beta(\gamma_0(B_i))$.
For this we need $|B_i|\ge 5$, which holds by Eq.~\eqref{eq:block5}.

\mn
Let $\Delta = \bigcup_{i=1}^k \Delta_i$.  What we have done
was coding the colors of $\gamma_0$ by the sizes of the subsets
$\Delta_i$, taking care to have $\Delta_1$ the only one among the
$\Delta_i$ to have size $t$.

\mn
We observe that $G_{\Delta}\le G_{\gamma_1}$
and therefore $NG_{\Delta}/N$ is solvable.

\mn
Claim A. \quad $\vf(G_{\Delta}) < T$.

\begin{proof}
Suppose for a contradiction that  $\vf(G_{\Delta}) = T$.
Then by Obs.~\ref{obs:core}, the perfect core of
$G_{\Delta}$ still maps onto $T$: \quad
$\vf((G_{\Delta})\inff) = T$.  But
$(NG_{\Delta})\inff/N=1$ (because $NG_{\Delta}/N$ is solvable),
hence $(G_{\Delta})\inff\le N$, so 
$\vf((G_{\Delta})\inff)=\sigma_1\rho_1(G_{\Delta})\inff)
\le \sigma_1((N_1)_{\Delta_1}) < T$ by the choice of $\Delta_1$,
a contradiction, proving Claim~A and thereby completing the proof
of the Theorem in Case 2a.
\end{proof}

\mn
Case 2b.\quad $G/N=\alt(\calB)$ or $\sym(\calB)$ and $k\ge 21$.

\mn
Let $G_1 :=G_{B_1}$.  If $\vf(G_1) < T$ then select $\Delta := B_1$
and we are done.  Henceforth we assume $\vf(G_1)=T$.
The kernel of the epimorphism $\vf_1 : G_1\onto T$
(the restriction of $\vf$ to $G_1$) is $K_1 :=K\capp G_1$.

\mn
Let $G^*$ denote the restriction of $G_1$ to $B_1$, so $G^*\le\sym(B_1)$.
Let $L$ denote the kernel of the epimorphism
$\pi : G_1\onto G^*$, so $L$ is the pointwise stabilizer
of $B_1$ in $G_1$, \quad $L=(G_1)_{(B_1)}$.

\mn
Case 2b1.\quad $L \le K_1$.\quad

\mn
Now $\vf_1 : G_1\onto T$ factors across
$G^*$:\quad $\vf_1 = \tau\pi$, hence $\tau(G^*)=T$.

\mn
Let $\Delta \subseteq B_1$ be such that $\tau((G^*)_{\Delta}) < T$.
Such a subset exists by induction on $n$.
Note that $\Delta$ is a nontrivial subset of $B_1$ 
since $\tau(G^*)=T$.

\mn
Claim B. \quad $\vf(G_{\Delta}) < T$.

\begin{proof}
Since $\Delta\subseteq B_1$ and $\Delta\neq\emptyset$,
we have $G_{\Delta}\le G_1$, so $G_{\Delta}=(G_1)_{\Delta}$.
Observe also that $\pi(G_1)_{\Delta}=(G^*)_{\Delta}$.
Therefore $\vf(G_{\Delta})=\vf_1((G_1)_{\Delta})=
\tau((G^*)_{\Delta}) < T$.
This proves Claim~B and thereby completes the proof of
Case 2b1.
\end{proof}

\mn
Case 2b2.\quad $L\nleq K_1$. 

\mn
Since $G_1/K_1\cong T$ is simple, $K_1$ is a maximal
normal subgroup in $G_1$, therefore $K_1L=G_1$.

\mn
Case 2b2i. \quad $N\le K_1$.

\mn
In this case we are in a situation analogous to
Case 1: $\vf_1$ factors across $N$, \ie, there exists
$\vfbar_1 : G_1/N \to T$ such that
$\vf_1 = \vfbar_1\psi_1$ where $\psi_1 : G_1\onto G_1/N$.
It follows that $\vfbar: G_1/N\onto T$  is an epimorphism,
but $G_1/N$ is symmetric or alternating of degree $\ge 20$,
so $\vfbar$ is an isomorphism.  Let $\Delta=B_1\cup \{x\}$
for an arbitrary $x\in B_2$.  Now $G_{\Delta}$ fixes
both $B_1$ and $B_2$ setwise, therefore $NG_{\Delta}/N$
has order less than $|NG_1/N|=|T|$, consequently
$|\vf(G_{\Delta})|=|\vfbar_1(NG_{\Delta}/N)| < |T|$
and we are done with Case 2b2i.

\mn
Case 2b2ii. \quad $N\nleq K_1$ and $G/N = \alt(\calB)$.\\
We claim that this case cannot occur.

\mn
Again because $K_1$ is a maximal normal subgroup of $G_1$,
we have $K_1N = G_1$.

\mn
Recall that $M_1 = N_{(B_1)}$, the pointwise stabilizer of
$B_1$ in $N$.  Moreover, we are in Case~2, so
$M_1\le K$ by Eq.~\eqref{eq:Mi}, and therefore $M_1\le K_1$.

\mn
Recall that $L=(G_1)_{(B_1)}$.  Therefore $M_1 = N\capp L$.
So if $L\le N$ then $L=M_1\le K_1$, 
contradicting the assumption that put us in Case 2b2.
Therefore $L\nleq N$.
But now $N$ is a maximal normal subgroup in $G_1$
(since $G_1/N\cong A_{k-1}$), so $NL=G_1$.

\mn
Summarizing, we have $NK_1=NL=K_1L=G_1$ and $G_1/N\cong A_{k-1}$
and $G_1/K_1\cong T$; these quotients are nonabelian simple.
Setting $H:=G_1$, $A:=N$, $B:=K_1$, and $C:=L$, all assumptions
of Lemma~\ref{lem:3normal} are satisfied.  The conclusion is
that $K_1\ngeq N\capp L$.  But $N\capp L=M_1$ and we know that
$M_1\le K_1$, a contradiction, proving that Case 2b2ii cannot
occur.

\mn
Case 2b2iii. \quad $N\nleq K_1$ and $G/N = \sym(\calB)$.

\mn
In this case we have $\psi(G)= \sym(\calB)$.  Let
$\wt{G} = \psi^{-1}(\alt(\calB))$.  Replace $G$ by
$\wt{G}$; then we land in Case~2b2ii, which is impossible.
This completes the proof of this last case and with it
the proof of Theorems~\ref{thm:simple1} and~\ref{thm:simple}.
\end{proof}

\section{Reduction to the solvable case}
\label{sec:red-to-solvable}

First we extend Theorem~\ref{thm:simple} to all non-solvable
target groups.
\begin{theorem}[Reducing non-solvable image]\
 \label{thm:nonsolvable}
Let $G\le \sym(\Omega)$.  Let $\vf: G\onto H$ be an epimorphism
where $H$ is a non-solvable group.  Then 
$(\exists \Delta \subseteq\Omega)(\vf(G_\Delta) < H)$.
\end{theorem}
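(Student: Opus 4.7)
The plan is to proceed by induction on $|H|$, with Theorem~\ref{thm:simple} as the engine. The inductive step splits according to whether $H$ equals its perfect core $H\inff$.

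In the first case, where $H$ is perfect, the goal is to extract a nonabelian simple quotient of $H$. Taking any maximal normal subgroup $M < H$ yields a simple quotient $T := H/M$; since $H$ is perfect, the Perfect core lemma (Obs.~\ref{obs:core}) forces $T = (H/M)\inff$, so $T$ is perfect, hence nonabelian simple. Composing $\vf$ with the projection $H \onto T$ reduces the problem to Theorem~\ref{thm:simple}, which furnishes $\Delta$ with $\vf(G_\Delta) M < H$, and therefore $\vf(G_\Delta) < H$.

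In the second case, $H\inff$ is a proper normal subgroup of $H$ with $H/H\inff$ solvable. Because an extension of a solvable group by a solvable group is solvable, $H\inff$ must itself be non-solvable. I would then apply the inductive hypothesis to the restriction $\vf_{|G_1}: G_1 \onto H\inff$, where $G_1 := \vf^{-1}(H\inff)$; this yields $\Delta \subseteq \Omega$ with $\vf((G_1)_\Delta) < H\inff$. Since $(G_1)_\Delta = G_\Delta \capp G_1$ and $G_1 = \vf^{-1}(H\inff)$, a direct unwinding (analogous to the steps in the proof of Theorem~\ref{thm:simple1}) gives $\vf((G_1)_\Delta) = \vf(G_\Delta) \capp H\inff$. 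If $\vf(G_\Delta) = H$ this intersection would equal $H\inff$, a contradiction; hence $\vf(G_\Delta) < H$.

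There is no serious technical obstacle. The entire content of the argument is the perfect-core dichotomy: being perfect and non-solvable forces a nonabelian simple quotient of $H$ itself (the case $H=S_5$ illustrates why one cannot hope to always find a nonabelian simple quotient of $H$ directly without first descending to $H\inff$), while failure of perfectness hands us a proper non-solvable target to which we descend. The setwise-stabilizer bookkeeping mirrors what was already done repeatedly in Section~\ref{sec:general2simple}.
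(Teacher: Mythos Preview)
Your proof is correct. Both your argument and the paper's pass through the perfect core $H\inff$ and a nonabelian simple quotient $T$ of it, invoking Theorem~\ref{thm:simple}; the difference is in organization. The paper proceeds directly rather than by induction: it applies Theorem~\ref{thm:simple} to the restricted epimorphism $G\inff \onto T$ (using $\vf(G\inff)=H\inff$ from Obs.~\ref{obs:inf-epi}), obtaining $\Delta$ with $\vf((G\inff)_\Delta) < H\inff$, and then concludes $\vf(G_\Delta) < H$ via the inclusion $(G_\Delta)\inff \le (G\inff)_\Delta$ together with Obs.~\ref{obs:inf-epi}. Your inductive framework, when unrolled (the recursion terminates after one step since $H\inff$ is perfect), applies Theorem~\ref{thm:simple} instead to the larger preimage $G_1 = \vf^{-1}(H\inff)$, and finishes with the set-theoretic identity $\vf(G_\Delta)\capp H\inff = \vf((G_1)_\Delta)$. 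The paper's version is slightly slicker (one pass, no induction), while yours avoids reasoning about the perfect core of the stabilizer $G_\Delta$; the substantive content is the same.
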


\begin{obs}\   \label{obs:inf-epi}
If $\vf : G\onto H$ is an epimorphism then
$\vf(G')=H'$.  It follows that $\vf(G\inff)=H\inff$.  
\hfill $\Box$ 
\end{obs}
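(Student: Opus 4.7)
The plan is to prove the commutator identity $\vf(G')=H'$ directly from the definition of commutators, and then derive the perfect-core statement by iterating this identity along the derived series.

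First I would establish $\vf(G')=H'$. The inclusion $\vf(G')\le H'$ is immediate from the fact that $\vf$ is a homomorphism: for any $x,y\in G$ we have $\vf([x,y])=[\vf(x),\vf(y)]\in H'$, so the generating set of $G'$ maps into $H'$, and consequently $\vf(G')\le H'$. For the reverse inclusion, surjectivity of $\vf$ is essential. Given any commutator $[h_1,h_2]\in H$, choose preimages $x_i\in\vf^{-1}(h_i)$; then $[h_1,h_2]=[\vf(x_1),\vf(x_2)]=\vf([x_1,x_2])\in\vf(G')$. Since $\vf(G')$ is a subgroup of $H$, the fact that it contains every commutator of $H$ forces $H'\le\vf(G')$.

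For the second statement, I would proceed by induction on the index in the derived series, showing that $\vf(G^{(k)})=H^{(k)}$ for every $k\ge 0$. The base case $k=0$ is just the hypothesis that $\vf$ is surjective. For the inductive step, observe that the restriction $\vf|_{G^{(k)}}$ is itself an epimorphism onto $\vf(G^{(k)})=H^{(k)}$; applying the already-proven first part to this restricted epimorphism yields
\[ \vf(G^{(k+1)})=\vf((G^{(k)})')=(H^{(k)})'=H^{(k+1)}. \]
Since $G$ is finite, its derived series stabilizes at some index $k_0$ with $G^{(k_0)}=G\inff$, and the image $H$ is also finite so its series stabilizes as well, necessarily at the same or smaller index. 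Taking $k$ large enough therefore gives $\vf(G\inff)=H\inff$.

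There is no real obstacle to this argument; the whole proof is routine once one unravels the definitions. The one subtle point worth flagging is in the inductive step: one must apply the commutator identity to the restricted epimorphism $\vf|_{G^{(k)}}\colon G^{(k)}\onto H^{(k)}$, not to $\vf$ itself, since it is the derived subgroup of $G^{(k)}$ (rather than of $G$) that arises on the left-hand side. Once the identity is formulated as a statement about arbitrary epimorphisms of finite groups, this restriction is legitimate by the base case of the induction, and the argument closes cleanly.
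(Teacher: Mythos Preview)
Your proof is correct and is precisely the routine argument the paper has in mind; the paper itself gives no proof for this observation, marking it only with a $\Box$, so your write-up simply fills in the standard details.
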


\begin{proof}[Proof of Theorem~\ref{thm:nonsolvable}]
Since $H$ is not solvable, 
there is an epimorphism $\psi : H\inff\onto T$ where
$T$ is nonabelian simple.  Composing $\psi$ with $\vf$ we obtain 
an epimorphism $\xi : G\inff \onto T$.  By Theorem~\ref{thm:simple}
there exists 
$\Delta\subseteq\Omega$ such that $\xi((G\inff)_\Delta) < T$
and therefore $\vf((G\inff)_\Delta) < H\inff$.
We claim that this set $\Delta$ satisfies the conclusion
of the Lemma, \ie, $\vf(G_\Delta) < H$.

Let $L=G_\Delta$.   Observe that 
$L\inff\le G\inff\capp L=(G\inff)_\Delta$.
Therefore $\vf(L\inff)\le \vf((G\inff)_\Delta)
 < H\inff$.  It follows (by Observation~\ref{obs:inf-epi})
that $\vf(L) < H$.
\end{proof}

\section{Reduction to bounded derived length}
\label{sec:bded-derived-length}

\begin{theorem}    \label{thm:bded-orbits}
There exists a constant $c$ such that the following holds.
If $G\le\sym(\Omega)$ is solvable then there exists
$\Delta\subset\Omega$ such that all orbits
of $G_{\Delta}$ have length $\le c$.  In particular,
the derived length of $G_{\Delta}$ is less than $2c$.
\end{theorem}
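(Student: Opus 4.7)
The plan is to prove the theorem by induction on $|\Omega|$, reducing via the block system structure of transitive solvable groups to the primitive case, where Gluck's classification of solvable primitive groups supplies the key input.

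First I would reduce to the transitive case: if $G$ has orbits $\Omega_1,\dots,\Omega_r$, then $G$ embeds as a subdirect product of the orbit restrictions $G|_{\Omega_i}$, and applying the inductive hypothesis to each factor yields subsets $\Delta_i\subseteq\Omega_i$ whose union $\Delta$ works, since $G_{\Delta}$ preserves each $\Omega_i$ setwise and restricts on each orbit to a subgroup of $(G|_{\Omega_i})_{\Delta_i}$. For transitive solvable $G$, I would then take a minimal system of imprimitivity $\calB$ (the $G$-invariant partition into maximal blocks) with kernel $N\normal G$, so $G/N$ is a primitive solvable group acting on $\calB$. By Gluck~\cite{gluck}, every solvable primitive permutation group admits an asymmetric 2-coloring except for a finite list of exceptions all of degree $\le 9$; hence there is a subset $\bar\Delta\subseteq\calB$ whose setwise stabilizer $(G/N)_{\bar\Delta}$ has orbits of size bounded by an absolute constant $c_0$. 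Lifting $\bar\Delta$ to the block-monochromatic subset $U=\bigsqcup_{B\in\bar\Delta}B$ of $\Omega$, the stabilizer $G_U$ projects into $(G/N)_{\bar\Delta}$ and thus has orbits on $\calB$ of size $\le c_0$.

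The hard part will be combining this block-level control with a within-block refinement into a \emph{single} 2-coloring of $\Omega$. Applying the inductive hypothesis to the solvable group $N|_B$ on a fixed block $B$ (where $|B|<|\Omega|$ whenever the block system is nontrivial) yields a subset $\Delta_B\subseteq B$ with $(N|_B)_{\Delta_B}$-orbits of size $\le c$. I would then propagate $\Delta_B$ to all blocks in a $G$-equivariant manner (using transitivity on $\calB$) and merge with $U$ to produce a single subset $\Delta\subseteq\Omega$. The delicate issue is that within-block refinement destroys the block-monochromatic structure that made the block-level bound work, so the two layers must be chosen compatibly---for example by arranging the within-block pattern to behave well under block-stabilizer symmetries, perhaps bootstrapped from the asymmetric 5-coloring supplied by Lemma~\ref{lem:solv5color}---so that the combined $\Delta$ retains both layers of orbit bounds, with the final constant $c$ independent of $|\Omega|$.

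Finally, the ``in particular'' clause is a standard observation: any permutation group all of whose orbits have length $\le c$ embeds into a direct product of copies of $\sym(c)$, and a solvable subgroup of $\sym(c)$ has derived length (by Dixon's bound on solvable subgroups of the symmetric group, or by direct inspection for small $c$) well below $2c$.
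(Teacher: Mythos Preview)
Your proposal correctly identifies the ingredients (Gluck's theorem, block systems, Lemma~\ref{lem:solv5color}) but leaves the central difficulty unresolved. You explicitly flag the ``hard part''---merging block-level control and within-block control into a \emph{single} 2-coloring---and then offer only a vague hope that the two layers can be ``chosen compatibly.'' This is precisely the step that carries all the content, and without a concrete mechanism your induction does not close: taking $\Delta = U\cup\bigcup\Delta_B$ either destroys the block-monochromatic pattern (so $G_\Delta$ need not project into $(G/N)_{\bar\Delta}$) or, if the $\Delta_B$ are $G$-equivariant, fails to distinguish blocks at all.

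The paper's proof supplies the missing device and, in doing so, abandons both your induction and your choice of block system. The key trick is to \emph{encode a 5-coloring of the block system by the sizes $|\Delta\cap B_i|$}. One first applies Lemma~\ref{lem:solv5color} to the quotient $\wt G$ on $\calB$ to obtain an asymmetric 5-coloring $\gamma:\calB\to[5]$, and then chooses $\Delta_i\subseteq B_i$ so that $|\Delta_i|$ determines $\gamma(B_i)$. Any $\sigma\in G_\Delta$ then preserves the sizes $|\Delta\cap B_i|$, hence preserves $\gamma$, hence fixes every block setwise---so orbits of $G_\Delta$ lie inside blocks. When blocks are moderately sized ($4\le t\le 3c_0$) this already bounds the orbits. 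When minimal blocks are large ($t\ge c_0$), the within-block action $G_i$ is \emph{primitive}, and Corollary~\ref{cor:5asymm} furnishes five $G_i$-asymmetric subsets of distinct sizes, so the same $\Delta_i$ that encode $\gamma$ also kill the within-block action entirely. A short third case handles minimal blocks of size $\le 3$. Note that the paper works with \emph{minimal} blocks (so the within-block action is primitive and Corollary~\ref{cor:5asymm} applies), the reverse of your choice of maximal blocks; and the argument is a direct case split on block size, not an induction on $|\Omega|$.
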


The bound $2c$ comes from \cite{chains} where it is shown
that the length of any subgroup chain in $S_n$ is less than $2n$.
A far stronger bound on the length of the derived chains
of permutation groups is known;
Dixon~\cite{dixon} has shown that the derived length of
a solvable group of degree $n$ is at most $(5\log_3 n)/2$.
However, we do not need any of this, all we need is that
groups of bounded order have bounded derived length, which
is obvious.

We devote the rest of this section to proving Theorem~\ref{thm:bded-orbits}.

We say that a $k$-coloring $\gamma: \Omega\to [k]$
is \emph{uniform} if $|\gamma^{-1}(i)|=n/k$ for each $i\in [k]$.
Note that in a non-uniform $k$-coloring we permit some of the colors
not to be used.

\begin{lemma}    \label{lem:solvprim5color}
Let $G\le \sym(\Omega)$ be a primitive solvable group.
Then $G$ admits a non-uniform asymmetric 5-coloring.
\end{lemma}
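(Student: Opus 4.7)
The plan is to combine Gluck's 1983 classification \cite{gluck} of the solvable primitive groups lacking an asymmetric $2$-coloring with the fact that any $k$-coloring with $k<5$, interpreted as a $5$-coloring, is automatically non-uniform for $n\ge 1$ (some color class is empty, yet not all can equal $n/5$).

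Let $\mathcal{E}$ denote Gluck's finite, explicit list of such exceptional solvable primitive groups (a sublist of Seress's 43 groups appearing in Theorem~\ref{thm:seress}); they all have small bounded degree. For $G\notin\mathcal{E}$, I would take a $G$-asymmetric $2$-coloring $\gamma\colon\Omega\to\{1,2\}$ corresponding to $\Delta\subseteq\Omega$ with $G_\Delta=1$. Viewed as a $5$-coloring with colors $3,4,5$ unused, $\gamma$ has class sizes $(|\Delta|,\,n-|\Delta|,\,0,\,0,\,0)$, which fails to be $(n/5,\dots,n/5)$ for every $n\ge 1$, hence is non-uniform, and it remains $G$-asymmetric by construction.

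For each $G\in\mathcal{E}$ I would verify the claim by direct inspection. The counting argument of Prop.~\ref{prop:motionlemma} gives
\[
\#\{G\text{-asymmetric }5\text{-colorings}\}\;\ge\;5^{n}-(|G|-1)\cdot 5^{\,n-\mu(G)/2},
\]
which is positive whenever $5^{\mu(G)/2}>|G|-1$ and so handles most small groups in $\mathcal{E}$. In the borderline cases one constructs the coloring directly from a bounded-size base $B$ of $G$ by giving its elements distinct colors from $\{1,\dots,|B|\}$ and the rest a single residual color; the resulting $5$-coloring is asymmetric because its pointwise stabilizer contains $G_{(B)}=1$, and it is non-uniform except in the single degenerate case $|B|=4,\ n=5$, which is dispatched by a separate ad hoc choice. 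The main obstacle is therefore organizing the case analysis for $\mathcal{E}$; this is routine because the list is finite and explicit, and the non-uniformity requirement is essentially free once asymmetry is achieved with fewer than five colors, or with class sizes that are not all equal.
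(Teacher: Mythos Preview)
Your plan is correct and follows essentially the same route as the paper: both invoke Gluck for $n\ge 10$ and pad the resulting $2$-coloring with empty color classes, then handle the finitely many exceptions (all of degree $\le 9$) by coloring the elements of a small base with distinct colors. The paper is simply more direct in the second step: since a primitive solvable group is affine with $n=p^d$ and $d\le 3$ when $n\le 9$, the set $\{0,e_1,\dots,e_d\}$ is an explicit base of size $d+1\le 4$, and the resulting coloring with class sizes $(1,\dots,1,\,n-d-1,\,0,\dots,0)$ is visibly non-uniform---so your Motion Lemma detour is unnecessary (and, as written, does not itself address non-uniformity), and the case $|B|=4,\ n=5$ you flag cannot arise since $n=5$ forces $d=1$.
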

\begin{proof}
Gluck proves \cite[Theorem 1]{gluck} that for $n\ge 10$, 
$G$ admits an asymmetric 2-coloring.
We can view such a coloring as a non-uniform 3-coloring
by adding an empty color.
(In fact, Gluck proves the existence of a non-uniform
2-coloring for $n\ge 10$.)  

To address the remaining cases directly, we observe that,
since $G$ is primitive and solvable, we are in the affine case:
$\Omega$ can be identified with the vector space $\fff_p^d$
for some prime $p$ and $d\ge 1$, and $G\le \agl(d,p)$.

Let us assign distinct colors to $0$ and the $d$ vectors in
a basis, and one more color to the rest of the space.
This coloring is clearly asymmetric, and it uses
at most $d+2$ colors.  If $n\le 9$ then $d\le 3$, so
we are using at most $5$ colors.

This coloring is not uniform for any value $n\le 9$.
(Note that we throw in one or more empty color classes
for $n\le 4$.)
\end{proof}
\begin{remark}
The bound $5$ in the Lemma is tight, as shown by $G=S_4$.
\end{remark}

Now we extend this result to all solvable permutation groups,
dropping the condition of non-uniformity.

\begin{lemma}   \label{lem:solv5color}
Let $G\le \sym(\Omega)$ be a solvable group.
Then $G$ admits an asymmetric 5-coloring.
\end{lemma}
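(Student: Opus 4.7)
The plan is to prove the lemma by induction on $n = |\Omega|$; the base case $n \le 5$ is immediate, obtained by assigning each point a distinct color.  Three cases arise in the inductive step.  If $G$ is intransitive, partition $\Omega$ into its $G$-orbits and apply the inductive hypothesis to each orbit restriction (a solvable quotient of $G$ of smaller degree); the combined coloring is asymmetric for $G$, which embeds as a subdirect product of these restrictions.  If $G$ is transitive and primitive, apply Lemma~\ref{lem:solvprim5color} directly.

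The main case is $G$ transitive but imprimitive.  Let $\calB$ be a complete system of imprimitivity consisting of maximal blocks and let $N \normal G$ be the kernel of the induced action on $\calB$, so $G/N$ is solvable and primitive on $\calB$; Lemma~\ref{lem:solvprim5color} yields an asymmetric $5$-coloring $\beta : \calB \to [5]$ for $G/N$.  Fix $B_* \in \calB$ and let $H \le \sym(B_*)$ denote the induced action of $G_{B_*}$ on $B_*$; this is a solvable group of degree $m$ with $2 \le m < n$.  Granting the sublemma stated below, choose five pairwise distinct $H$-orbits $O_1, \dots, O_5$ of asymmetric $5$-colorings of $B_*$.  For each $B_j \in \calB$ pick $g_j \in G$ with $g_j(B_*) = B_j$, and set $H_j = G_{B_j}|_{B_j}$; the $H_j$-orbit $g_j \cdot O_c$ on $[5]^{B_j}$ is independent of the choice of $g_j$, since modifying $g_j$ on the right by any element of $G_{B_*}$ merely permutes $O_c$ within the $H$-orbit.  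Define $\gamma$ by letting $\gamma|_{B_j}$ be any representative of $g_j \cdot O_{\beta(B_j)}$.

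To verify asymmetry, any $g \in G_\gamma$ induces a permutation $\sigma \in G/N$ of $\calB$.  Because $g\, g_j$ and $g_{\sigma(j)}$ both carry $B_*$ to $B_{\sigma(j)}$, they differ by an element of $G_{B_*}$, so $(g g_j) \cdot O_c = g_{\sigma(j)} \cdot O_c$ for every $c$; combined with $g(\gamma) = \gamma$, this forces $\beta(B_{\sigma(j)}) = \beta(B_j)$ for all $j$, hence $\sigma \in (G/N)_\beta = 1$ and $g \in N$.  Then $g$ stabilizes each block setwise; since $\gamma|_{B_j}$ lies in an asymmetric $H_j$-orbit, the restriction $g|_{B_j} \in H_j$ is trivial, whence $g = 1$.

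The chief obstacle is the sublemma: \emph{every solvable $H \le \sym(B)$ with $|B| \ge 2$ admits at least five distinct $H$-orbits of asymmetric $5$-colorings of $B$.}  A useful device is color-shifting: if $\tau$ is an asymmetric coloring of $B$ using at most $4$ of the $5$ colors, then its cyclic shifts $\tau^{(c)}(x) := \tau(x) + c \pmod{5}$ are each asymmetric and have five pairwise distinct color palettes (since no proper nonempty subset of $\zzz/5$ is translation-invariant), so they lie in five distinct $H$-orbits.  This handles $|B| \le 4$ automatically; the tight instance is $H = S_4$ on four points, where the five asymmetric orbits correspond bijectively to the five ``omitted-color'' choices in an injective map $B \to [5]$.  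For $|B| \ge 5$, one first exhibits an asymmetric $4$-coloring (via a counting argument invoking Dixon's bound $|H| \le 24^{(m-1)/3}$ and fixed-point counts refined by the minimum degree of $H$) and then applies color-shifting to produce the five desired orbits.
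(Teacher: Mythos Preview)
Your overall architecture is sound and close in spirit to the paper's, but there is a genuine gap in your proof of the sublemma. You assert that for $|B|\ge 5$ every solvable $H\le\sym(B)$ admits an asymmetric $4$-coloring, to be established by a counting argument using Dixon's bound. This is false. Take $H=S_4\wr S_2$ acting on $8$ points (two blocks of size $4$); this group is solvable, of degree $8\ge 5$. For any $4$-coloring, if some block is not rainbow (some color repeats) then a transposition in that copy of $S_4$ fixes the coloring; if both blocks are rainbow, then each uses all four colors exactly once, and the top involution composed with the appropriate base permutation fixes the coloring. Hence $H$ has no asymmetric $4$-coloring, and no counting argument can produce one. (Note the minimum degree here is $2$, so the Motion Lemma gives nothing useful for $d=4$.) Your color-shifting device therefore does not apply, and the sublemma remains unproved as written. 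The sublemma may well be true---for $S_4\wr S_2$ one checks there are exactly $10$ $H$-orbits of asymmetric $5$-colorings---but establishing it in general is essentially as hard as the lemma itself; you would have to carry the stronger ``at least five orbits'' statement through the induction.

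The paper avoids this difficulty by reversing your choice of block system: it takes a \emph{maximal} system of imprimitivity (minimal blocks), so that the induced action $G_i$ on each block $B_i$ is \emph{primitive}. Lemma~\ref{lem:solvprim5color} then guarantees a \emph{non-uniform} asymmetric $5$-coloring of each block, and by Observation~\ref{obs:permute-colors} permuting the colors yields five pairwise non-$G_i$-isomorphic such colorings, distinguishable by their color-multiplicity vectors. The inductive hypothesis is applied to the (not necessarily primitive) action $\wt G$ on $\calB$. In short, the paper places primitivity on the block side, where Lemma~\ref{lem:solvprim5color} directly supplies the five distinguishable block-colorings; you placed primitivity on the quotient side, which forces you to manufacture five distinguishable colorings for an arbitrary solvable block action---precisely the step that breaks.
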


\begin{remark}  \label{rem:5colorstight}
The bound $5$ in the Lemma is tight, as shown by the wreath
product $G=S_4\wr H$ where $H$ is any nontrivial permutation
group.  ($H$ is the top group in the wreath product.)
\end{remark}

\begin{definition}
Let $G\le\sym(\Omega)$ and let $\gamma_1,\gamma_2 :\Omega\to\Sigma$
be two colorings.  We say that a permutation $\pi\in\sym(\Omega)$
is a \emph{$G$-isomorphism} of $\gamma_1$ to $\gamma_2$ if
for all $x\in\Omega$, $\gamma_2(\pi(x))=\gamma_1(x)$.
\end{definition}

\begin{obs}    \label{obs:permute-colors}
If a group $G$ admits a non-uniform asymmetric $k$-coloring
then by permuting the colors, we obtain at least $k$
non-$G$-isomorphic non-uniform $k$-colorings.
\hfill $\Box$ 
\end{obs}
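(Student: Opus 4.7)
The plan is to consider the full orbit $\Gamma := \{\sigma\gamma : \sigma \in \sym([k])\}$ of $\gamma$ under all $k!$ relabellings of the color set and to show that $\Gamma$ contains at least $k$ pairwise non-$G$-isomorphic elements. Every $\sigma\gamma$ shares the multiset of color-class sizes with $\gamma$, hence each $\sigma\gamma$ is non-uniform; the observation therefore reduces to counting $G$-isomorphism classes in $\Gamma$.

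Define $R \subseteq \sym([k])$ to be the set of color permutations $\tau$ for which there exists $\pi \in G$ satisfying $\gamma(\pi(x)) = \tau(\gamma(x))$ for every $x \in \Omega$. A routine composition argument (using $\pi_1\pi_2$ to realize $\tau_1\tau_2$, and $\pi^{-1}$ to realize $\tau^{-1}$) shows that $R$ is a subgroup of $\sym([k])$. Unwinding the definitions yields the key equivalence: $\sigma_1\gamma$ and $\sigma_2\gamma$ are $G$-isomorphic if and only if $\sigma_2^{-1}\sigma_1 \in R$. Consequently the $G$-iso classes in $\Gamma$ correspond bijectively to the left cosets of $R$ in $\sym([k])$, and their number equals $k!/|R|$. (Asymmetry of $\gamma$ would be needed to identify $R$ with a subgroup of $G$ itself, but for the counting it is not essential.)

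The last step is to show $|R| \le (k-1)!$, which produces at least $k$ iso classes. Each $\tau \in R$ is realized by a bijection $\pi$ of $\Omega$ that maps the fiber $\gamma^{-1}(c)$ bijectively onto $\gamma^{-1}(\tau(c))$, so $\tau$ preserves the partition $P$ of $[k]$ induced by the fiber-size function $c \mapsto |\gamma^{-1}(c)|$. Non-uniformity of $\gamma$ forces $P$ to have at least two blocks; writing the block sizes as $m_1,\ldots,m_r$ with $r \ge 2$ and $\sum_i m_i = k$, we obtain $R \le \prod_i \sym(P_i)$, so $|R| \le \prod_i m_i!$. The elementary multinomial bound $k!/\prod_i m_i! \ge \binom{k}{m_1} \ge k$ (valid whenever $1 \le m_1 \le k-1$) then gives $k!/|R| \ge k$, as required. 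The only technical ingredient is this multinomial inequality; there is no substantive obstacle.
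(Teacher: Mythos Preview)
Your proof is correct. The paper itself gives no proof of this observation (it is marked with $\Box$ and left to the reader), so there is nothing to compare against; your argument is a clean way to fill in the details, and the essential mechanism---that a $G$-isomorphism forces matching fiber sizes, so $R$ lies in the Young subgroup $\prod_i \sym(P_i)$, whence the multinomial bound gives at least $k$ cosets---is surely what the author had in mind.

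Your parenthetical remark is also on point: the asymmetry hypothesis is not actually used in your count of $G$-isomorphism classes. The reason the paper states the observation with the asymmetry hypothesis is that in the application (Lemma~\ref{lem:solv5color}) one needs the $k$ pairwise non-$G$-isomorphic colorings to themselves be asymmetric, and this is inherited trivially since $G_{\sigma\gamma}=G_\gamma$ for every color permutation $\sigma$. You might add a sentence to that effect, since otherwise a reader may wonder where the hypothesis went.
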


\begin{proof}[Proof of Lemma~\ref{lem:solv5color}]
Without loss of generality we may assume $G$ is transitive.
(Otherwise, select a coloring of each orbit and combine them.)
  
We proceed by induction on $n=|\Omega|$.
If $G$ is primitive, we are done by 
Lemma~\ref{lem:solvprim5color}.

Let now $G$ be imprimitive and
let $\calB=\{B_1,\dots,B_k\}$ be a maximal system
of imprimitivity (the blocks are minimal).  Let $|B_i|=t$.

Let $G_i$ denote the restriction of the setwise stabilizer
$G_{B_i}$ to $B_i$, so $G_i\le\sym(B_i)$.  Since $B_i$
is a minimal block, $G_i$ is primitive.  The $G_i$ are
equivalent permutation groups (equivalent under the
action of $G$).  Let $\delta_{1,1},\dots,\delta_{1,5}$ be five
non-$G_1$-isomorphic non-uniform $G_1$-asymmetric colorings
of $B_1$, and let $\delta_{i,1},\dots,\delta_{i,5}$ be
colorings of $B_i$ obtained by applying an element
$\sigma_i$ to our colorings of $B_1$ where $\sigma_i(B_1)=B_i$.
(Let $\sigma_1$ be the identity.)  Let $\widehat{\delta}_{i,j}$
be the 6-coloring of $\Omega$ obtained by adding a 6th color
everywhere outside $B_i$.  It should be clear that
if $\widehat{\delta}_{i,j}$ is $G$-isomorphic to
$\widehat{\delta}_{i',j'}$ then $j=j'$, simply by counting the
multiplicities of colors.

Let $\wt{G}$ denote the image of the $G\acts\calB$ action,
so $\wt{G}\le\sym(\calB)$.  By induction,
let $\wt{\gamma} : \calB\to [5]$
be a non-uniform asymmetric 5-coloring for $\wt{G}$.

We encode these colors by non-isomorphic colorings
of the blocks: We define the 5-coloring
$\gamma : \Omega\to [5]$ by setting, for each
$x\in B_i$,\ $\gamma(x)=\delta_{i,j}(x)$ where $j=\wt{\gamma}(B_i)$.

Assume $\sigma\in G$ preserves the coloring $\gamma$.  Since
the coloring $\wt{\gamma}$ can be reconstructed from $\gamma$,
we infer that $\sigma(B_i)=B_i$ for every $i$.  Now 
$\sigma_{|B_i}\in G_i$, and the coloring $\gamma|_{B_i}=\delta_{i,j}$
is $G_i$-asymmetric, so $\sigma=1$.
\end{proof}

\begin{lemma}  \label{lem:many2colorings}  
  There exist $\epsilon > 0$
and a threshold $n_1$
such that the following holds.  If $G\le \sym(\Omega)$ is a 
primitive solvable permutation group of degree $n \ge n_1$
then for every $j$ in the interval $n^{1-\epsilon}\le j \le n/2$
there exists a $G$-asymmetric subset $\Delta_i\subseteq\Omega$
of size $|\Delta_j|=j$.
\end{lemma}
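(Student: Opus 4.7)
The plan is to refine the Cameron--Neumann--Saxl/Gluck counting argument used in Prop.~\ref{prop:motionlemma} by prescribing the size of the asymmetric subset. Three classical facts are the inputs. First, every primitive solvable group $G\le\sym(\Omega)$ is affine: the socle of $G$ is an elementary abelian $p$-group of order $n=p^d$, and $G\le\agl(d,p)$. Second, by the theorem of P\'alfy and Wolf, $|G|\le n^{c_0}$ for an absolute constant $c_0$. Third, the fixed-point set of any non-identity $\sigma\in\agl(d,p)$ with linear part $A$ is either empty or a coset of $\ker(A-I)$, so $|\mathrm{fix}(\sigma)|\le p^{d-1}=n/p\le n/2$, whence $\mu(G)\ge n/2$.

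First I bound, for each $\sigma\in G\setminus\{1\}$, the number of $\sigma$-invariant $j$-subsets of $\Omega$. Set $m=|\supp(\sigma)|\ge n/2$; then the orbits of $\sigma$ comprise $f=n-m$ singletons and $r\le m/2$ non-trivial orbits, a total of $N:=f+r\le n-m/2\le 3n/4$ orbits. A $\sigma$-invariant $j$-subset is a union of orbits; since every orbit has size at least $1$, it uses at most $j$ of them. Hence the number of such subsets is at most $\min\bigl(2^{N},\,\sum_{k=0}^{j}\binom{N}{k}\bigr)$.

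Summing over $\sigma\in G\setminus\{1\}$ and comparing with $\binom{n}{j}$, I distinguish two regimes. For $j\le N/2$ (which certainly covers $j\le 3n/8$) the binomial sum is at most $(j+1)\binom{N}{j}$, giving a ratio of non-asymmetric to total $j$-subsets of at most
\[
 |G|\,(j+1)\binom{N}{j}\Big/\binom{n}{j}\;\le\; n^{c_0+1}(N/n)^{j}\;\le\; n^{c_0+1}(3/4)^{j},
\]
which is less than $1$ once $j\ge C\log n$ for a suitable absolute constant $C$. For $3n/8<j\le n/2$ I use the trivial bound $2^{N}\le 2^{3n/4}$ together with $\binom{n}{j}\ge 2^{nH(j/n)}/(n+1)\ge 2^{0.95\,n}/(n+1)$, where $H$ is the binary entropy function; the resulting ratio is at most $n^{c_0+1}\cdot 2^{-0.2\,n}=o(1)$. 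In both regimes the ratio is strictly less than $1$ for $n$ sufficiently large, provided $j\ge n^{1-\epsilon}$ for any fixed $\epsilon\in(0,1)$. Choosing such an $\epsilon$ (say $\epsilon=1/2$) and a corresponding threshold $n_1$ delivers a $G$-asymmetric $j$-subset for every $j$ in the required interval.

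The key observation is the simple remark that a $\sigma$-invariant $j$-subset is a union of at most $j$ orbits of $\sigma$; this replaces the naive $2^{N}$ count by the much sharper $\binom{N}{j}$ bound, and is what allows the argument to reach down to $j=n^{1-\epsilon}$ rather than only $j\ge cn$. Granting this, the rest is routine bookkeeping with binomial coefficients together with the P\'alfy--Wolf order bound and the linear minimal-degree bound above; the only mild obstacle is the handling of the transition regime near $j=3n/8$, which is covered by the two complementary counts.
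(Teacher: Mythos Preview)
Your approach is the same as the paper's---Gluck's counting argument combined with the P\'alfy--Wolf order bound and the affine structure giving $\mu(G)\ge n/2$---and you actually supply the size-specific refinement that the paper's proof only gestures at (the paper stops at the bound $|G|\cdot 2^{3n/4}$ on \emph{all} non-asymmetric subsets and says ``the stated conclusion follows''). So your write-up is more complete than the paper's own sketch.

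There is, however, one genuine slip. You write ``For $j\le N/2$ (which certainly covers $j\le 3n/8$)'', but this implication goes the wrong way: you have established $N\le 3n/4$, not $N\ge 3n/4$, and $N=N_\sigma$ depends on $\sigma$ and can be much smaller (e.g.\ a translation in $\fff_p^d$ has $N=n/p$, which is $1$ when $d=1$). So for a given $j\le 3n/8$ there may well be elements $\sigma$ with $N_\sigma<2j$, and for those your bound $\sum_{k\le j}\binom{N_\sigma}{k}\le (j+1)\binom{N_\sigma}{j}$ is not justified.

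The fix is immediate: since $\binom{N_\sigma}{k}\le\binom{\lfloor 3n/4\rfloor}{k}$ for all $k$ (monotonicity of binomial coefficients in the top argument), replace $\binom{N}{j}$ by $\binom{\lfloor 3n/4\rfloor}{j}$ throughout the first regime. Then $j\le 3n/8$ genuinely gives $j\le \lfloor 3n/4\rfloor/2$, the bound $(j+1)\binom{\lfloor 3n/4\rfloor}{j}$ is valid uniformly in $\sigma$, and your ratio estimate $\binom{\lfloor 3n/4\rfloor}{j}/\binom{n}{j}\le(3/4)^j$ goes through unchanged. With this one-line correction the argument is complete.
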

\begin{proof}
The proof consists of drawing the required conclusion from Gluck's
counting argument~\cite{gluck}.  We review the argument.

Since $G$ is primitive and solvable, we are in the affine case:
$\Omega$ can be identified with the $d$-dimensional vector
space over $\fff_p$ for some prime $p$ and $d\ge 1$,
and with this identification, $G\le \agl(d,p)$.  

It follows that the fixed points of any element of $G$
form an affine subspace of $\Omega$ and therefore the minimal 
degree of $G$ is $\ge n/2$ and consequently every
non-identity element $\sigma\in G$ decomposes into at
most $3n/4$ cycles.  It follows that the number of subsets 
fixed by $\sigma$ is at most $2^{3n/4}$ and therefore
the number of subsets that are not $G$-asymmetric is at most
$|G|\cdot 2^{3n/4}$.
Invoking the P\'alfy--Wolf Theorem~\cite{palfy, wolf}
that states that the order of a
primitive solvable group of degree $n$ is less than $n^C$
for some constant $C$, the stated conclusion follows.
\end{proof}  
We shall only use a weak corollary of this result.
\begin{corollary}    \label{cor:5asymm}
There exists a constant $c_0$ such that every primitive
solvable permutation group $G$ of degree $\ge c_0$ admits
at least five $G$-asymmetric subsets of different sizes.
\hfill $\Box$ 
\end{corollary}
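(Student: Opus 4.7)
The plan is to deduce the corollary directly from Lemma~\ref{lem:many2colorings}. That lemma already produces, for each integer $j$ in the interval $[n^{1-\epsilon}, n/2]$, a $G$-asymmetric subset $\Delta_j \subseteq \Omega$ of size exactly $j$, provided $n \ge n_1$. Since subsets of different sizes are automatically distinct, it suffices to choose the threshold $c_0$ large enough that this interval contains at least five integers.

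First I would note that the length of the interval is
\[
\tfrac{n}{2} - n^{1-\epsilon} \;=\; n\!\left(\tfrac{1}{2} - n^{-\epsilon}\right),
\]
which tends to infinity as $n\to\infty$. Hence there is some threshold $n_2$ such that for all $n\ge n_2$ the interval $[n^{1-\epsilon}, n/2]$ contains at least five integers. Set $c_0 := \max(n_1, n_2)$.

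Then for any primitive solvable permutation group $G$ of degree $n\ge c_0$, pick any five integers $j_1 < j_2 < j_3 < j_4 < j_5$ in $[n^{1-\epsilon}, n/2]$ and apply Lemma~\ref{lem:many2colorings} to each; this yields $G$-asymmetric subsets $\Delta_{j_1},\dots,\Delta_{j_5}$ of five distinct sizes, as required.

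There is no real obstacle here; the entire content lies in Lemma~\ref{lem:many2colorings}. The only thing to verify is the elementary arithmetic fact that $n/2 - n^{1-\epsilon} \ge 5$ for all sufficiently large $n$, which is immediate from $\epsilon > 0$.
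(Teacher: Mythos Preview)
Your proof is correct and follows exactly the paper's intended approach: the paper presents this corollary as an immediate consequence of Lemma~\ref{lem:many2colorings} (marked only with a $\Box$), and your argument spells out precisely the trivial step needed---that the interval $[n^{1-\epsilon}, n/2]$ contains at least five integers for sufficiently large $n$.
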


We are now ready to prove the main result of this section.

\begin{proof}[Proof of Theorem~\ref{thm:bded-orbits}]
This proof is not by induction, but we follow the scheme
of the inductive step in the proof of Lemma~\ref{lem:solv5color}.
Like there, we may assume $G$ is transitive.

\mn
We shall refer to the constant $c_0$ from Cor.~\ref{cor:5asymm}.  
We claim that there exists a set $\Delta\subseteq\Omega$
such that the orbits of $G_{\Delta}$ have length $\le c:=3c_0$.

\mn
A $G$-asymmetric subset does more than required (orbits of length 1),
so if $G$ is primitive and $n\ge c_0$ then we are done by 
Cor.~\ref{cor:5asymm}.  In fact, by Gluck~\cite{gluck}, we are 
done for all $n\ge 10$.

Let now $G$ be imprimitive and
let $\calB=\{B_1,\dots,B_k\}$ be a system
of imprimitivity.  Let $|B_i|=t$.
Let $G_i$ denote the restriction of the setwise stabilizer
$G_{B_i}$ to $B_i$, so $G_i\le\sym(B_i)$.

Let $\wt{G}$ denote the image of the $G\acts\calB$ action,
so $\wt{G}\le\sym(\calB)$.  Using Lemma~\ref{lem:solv5color},
let $\gamma : \calB\to [5]$ be a $\wt{G}$-asymmetric
5-coloring of $\calB$.

\mn
Case 1.\quad $4\le t \le 3c_0$.  In this case we claim
there exists $\Delta\subseteq\Omega$ such that the orbits
of $G_{\Delta}$ have length $\le t\le 3c_0$.

\mn
In this case, let $\Delta_i$ be an arbitrary subset of
$B_i$ of size $\gamma(B_i)-1$.  Let $\Delta = \bigcup_{i=1}^k \Delta_i$.

Assume $\sigma\in G$ preserves the set $\Delta$.  Since
the coloring $\gamma$ can be reconstructed from $\Delta$
simply by the sizes of the $\Delta_i$,
we infer that $\sigma(B_i)=B_i$ for every $i$.

Therefore, the orbits of $G_{\Delta}$ are subsets of the $B_i$
and therefore have length $\le t < 3c_0$.  This completes
the proof in Case 1.

\mn
Case 2.\quad $t \ge c_0$ and 
the $B_i$ are minimal blocks of imprimitivity.
In this case we claim
there exists an asymmetric $\Delta\subseteq\Omega$,
so the orbits of $G_{\Delta}$ have length 1.

\mn
Let $j_1 <\dots < j_5$ be five different sizes of $G_1$-asymmetric
subsets of $B_1$.

Let $\Delta_i\subseteq B_i$ be a $G_i$-asymmetric subset of
size $j_{\gamma(B_i)}$.  Let $\Delta = \bigcup_{i=1}^k \Delta_i$.

Assume $\sigma\in G$ preserves the set $\Delta$.  Since
the coloring $\gamma$ can be reconstructed from $\Delta$
simply by the sizes of the $\Delta_i$,
we infer that $\sigma(B_i)=B_i$ for every $i$.  Now 
$\sigma_{|B_i}\in G_i$, and the set $\Delta_i = \Delta\capp B_i$
is $G_i$-asymmetric, so $\sigma=1$.
This completes the proof in Case 2.

\mn
Case 3.\quad In all the remaining cases, we claim
there exists $\Delta\subseteq \Omega$ such that
the orbits of $G_{\Delta}$ have length $\le t\le 3$.

\mn
Now there is no block of size $4\le t < 3c_0$ (Case~1)
and also no minimal block of size $t \ge c_0$ (Case~2).
This means all minimal blocks have size $2\le t\le 3$ 
and the next smallest blocks have size $\ge 3c_0$.

\mn
Let $\calB=\{B_1,\dots,B_k\}$ be a system of minimal blocks,
so $2\le |B_i|\le 3$.  As before, let $\wt{G}$ be the
image of the action $G\acts\calB$.

\mn
Let further $\calD=\{D_1,\dots,D_{\ell}\}$ be a system of
imprimitivity that is a maximal coarsening of $\calB$,
\ie, $\calB$ is a refinement of $\calD$ and the $D_j$
are minimal among the blocks of imprimitivity that strictly
include some $B_i$.  Let $|D_j|=st$.  Let
$\wt{D}_j = \{B_i\in\calB\mid B_i\subseteq D_j\}$.
Now the $\wt{D}_j$ are minimal blocks for $\wt{G}$
and their size is $s \ge c_0$.  Therefore, by Case~2,
we have a $\wt{G}$-asymmetric set $\wt{\Delta}\subseteq \calB$.

\mn
Let us lift $\wt{\Delta}$ to $\Omega$ by setting
$\Delta = \bigcup \{B_i \mid B_i\in\wt{\Delta}\}$.
Now $G_{\Delta}$ fixes each $B_i$ setwise, so
its orbits have length $\le 3$.  This 
completes the proof of Theorem~\ref{thm:bded-orbits}.
\end{proof}

\section{From bounded derived length to asymmetry:
  reducing the derived length}
\label{sec:reducing-derived-length}

We restate Prop.~\ref{prop:derivedlength-intro}

\begin{prop}   \label{prop:derivedlength}
Let $G\le \sym(\Omega)$ be a solvable group with derived length $k\ge 1$.
Then there exists a subset $\Delta\subseteq \Omega$ such that
the derived length of $G_{\Delta}$ is at most $k-1$.
\end{prop}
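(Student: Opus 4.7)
The plan is to exploit the last nontrivial term of the derived series. Set $A:=G^{(k-1)}$; since $A'=G^{(k)}=1$ and $A\neq 1$, $A$ is a nontrivial abelian normal subgroup of $G$, and it acts faithfully on $\Omega$ because $G$ does. The basic reduction is the observation that for any $\Delta\subseteq\Omega$ we have $(G_\Delta)^{(k-1)}\le G^{(k-1)}=A$ and of course $(G_\Delta)^{(k-1)}\le G_\Delta$, so
\[
(G_\Delta)^{(k-1)}\;\le\;A\capp G_\Delta\;=\;A_\Delta.
\]
Hence it suffices to produce an $A$-asymmetric subset, i.e.\ a subset $\Delta\subseteq\Omega$ with $A_\Delta=1$.

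For that I would invoke the structure of faithful abelian permutation groups. Let $\Omega_1,\dots,\Omega_r$ be the $A$-orbits, and let $K_i\le A$ denote the kernel of the action of $A$ on $\Omega_i$; faithfulness is the statement $\bigcap_i K_i=1$. Since $A$ is abelian and transitive on $\Omega_i$, its action there is regular, so the stabilizer in $A$ of any point $x\in\Omega_i$ equals $K_i$. Pick one point $x_i$ from each orbit with $|\Omega_i|\ge 2$ and set $\Delta:=\{x_i:|\Omega_i|\ge 2\}$. Any $\sigma\in A_\Delta$ must send $x_i$ into $\Delta\capp\Omega_i=\{x_i\}$, hence fix each such $x_i$, so $\sigma\in \bigcap_{|\Omega_i|\ge 2}K_i$. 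The orbits of size $1$ contribute $K_j=A$ to the full intersection $\bigcap_i K_i$, so faithfulness already forces $\bigcap_{|\Omega_i|\ge 2}K_i=1$; therefore $A_\Delta=1$.

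Combining the two steps yields $(G_\Delta)^{(k-1)}=1$, so the derived length of $G_\Delta$ is at most $k-1$, as desired. The argument is quite direct; the only spot that needs a moment's care is the identification of each point stabilizer in $\Omega_i$ with the kernel $K_i$, which rests on the fact that an abelian transitive action is regular. As a byproduct one obtains the self-contained statement that every faithful finite abelian permutation group admits an asymmetric $2$-coloring, a fact which may be worth isolating.
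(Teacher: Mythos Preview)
Your proof is correct and follows essentially the same approach as the paper: take $A=G^{(k-1)}$, build an $A$-asymmetric subset via a transversal of the $A$-orbits (using that an abelian transitive action is regular), and conclude via the inclusion $(G_\Delta)^{(k-1)}\le A\cap G_\Delta=A_\Delta$. The paper isolates the abelian asymmetric $2$-coloring statement as a separate observation just as you suggest, and your restriction to orbits of size $\ge 2$ is a harmless cosmetic refinement.
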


We begin with the abelian case.
\begin{obs}
All abelian permutation groups admit an asymmetric 2-coloring.
\end{obs}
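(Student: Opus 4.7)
The approach is to take $\Delta$ to be a transversal of the $G$-orbits on $\Omega$ and verify that its setwise stabilizer is trivial. The main ingredient is the classical fact that a transitive abelian permutation group acts regularly on its domain, which collapses setwise stabilization of a transversal to pointwise stabilization.

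First I would decompose $\Omega$ into the $G$-orbits $\Omega_1,\dots,\Omega_r$, and for each $i$ let $K_i \le G$ denote the kernel of the restriction $G\to\sym(\Omega_i)$. The quotient $G/K_i$ acts faithfully and transitively on $\Omega_i$, and being abelian it acts regularly on $\Omega_i$; consequently for every $\omega\in\Omega_i$ the point stabilizer $G_\omega$ equals $K_i$. Faithfulness of the action $G\le\sym(\Omega)$ then gives $\bigcap_{i=1}^r K_i = 1$.

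Next I would choose any transversal $\Delta=\{\omega_1,\dots,\omega_r\}$ with $\omega_i\in\Omega_i$. For $g\in G_\Delta$, the element $g$ preserves the $G$-orbit partition, so $g(\omega_i)\in\Omega_i\capp\Delta=\{\omega_i\}$, forcing $g\in G_{\omega_i}=K_i$ for every $i$; hence $g\in\bigcap_i K_i = 1$, and $\Delta$ is $G$-asymmetric. There is no significant obstacle: the argument is essentially immediate once one combines regularity of transitive abelian actions with faithfulness of $G$ on $\Omega$.
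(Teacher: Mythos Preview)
Your proof is correct and follows essentially the same approach as the paper: choose a transversal $\Delta$ of the $G$-orbits and use that a transitive abelian permutation group is regular to conclude $G_\Delta=1$. Your version is slightly more formal in spelling out the kernels $K_i$ and their trivial intersection, but the underlying argument is identical.
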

\begin{proof}
We need to construct a $G$-asymmetric subset of
the domain $\Omega$.

Let $R_1,\dots,R_k$ be the orbits of $G$ and let $\Delta$ be a
transversal of the orbits, \ie, $\Delta\subseteq\Omega$ and
$|\Delta\capp R_i|=1$ for all $i$.\\
We claim that $|G_{\Delta}|=1$.  Indeed, the restriction of
$G$ to $R_i$ is a transitive abelian group which therefore is
regular, so fixing one of its points fixes the entire orbit
pointwise.
\end{proof}  

\begin{proof}[Proof of Prop.~\ref{prop:derivedlength}]

\mn
Let $H:=G^{(k-1)}$ be the last nontrivial term in the derived series
of $G$.  So $H$ is abelian.  Let $\Delta\subseteq\Omega$ be such
that $|H_{\Delta}|=1$.  We claim that the derived length of
$G_{\Delta}$ is at most $k-1$.

Indeed, let $L=G_{\Delta}$.  So $L^{(k-1)}\le L\capp G^{(k-1)} =
L\capp H = H_{\Delta} = 1$.
\end{proof}
This completes the proof of Theorem~\ref{thm:main1} and with it
the proof of our main result, Theorem~\ref{thm:main}.

\section{An effective version?}   \label{sec:effective}

In this section we consider finite inverse sequences, of
length $k$, of finite permutation groups.  Such a system is defined by
a sequence of $k+1$ groups, $(G_i : i=0,\dots,k)$ and a
sequence of $k$ homomorphisms, $\vf_i : G_i\to G_{i-1}$, $i=1\dots, k$.
We denote such a system as $(G_i,\vf_i)_{i\le k}$.  Here
$G_i\le \sym(\Omega_i)$.  For $i\ge j$, the transition homomorphism
$\vf_{i,j} : G_i \to G_j$ is defined as the composition of
$\vf_{\ell,\ell-1}$ for $\ell=i,\dots,j+1$.  Now the definition
of inverse limits applies; in particular, strands are defined 
as in Def.~\ref{def:strand} and they form the inverse limit
$\calG=\varprojlim G_i$.  We say that the inverse system, and
the inverse limit, \emph{ends} at $G_0$.
If the $\Omega_i$ are disjoint then
we view $\calG$ as a permutation group acting on 
$\Omega:=\bigcup_{i=0}^k \Omega_i$.
We say that a coloring $\gamma:\Omega\to\Sigma$ is
\emph{zero-asymmetric} if $\calG_{\gamma}$ fixes $\Omega_0$
pointwise.  We say that the coloring is \emph{zero-neutral}
if $\Omega_0$ is monochromatic, \ie, $|\gamma(\Omega_0)|=1$.
If all the $\vf_i$ (and therefore all the $\vf_{i,j}$)
are epimorphisms, we speak of an \emph{epimorphic
inverse sequence.}

Let $\mathscr{Gr}$ denote the class of
finite permutation groups.
\begin{theorem}[asymmetric coloring of inverse limit---finite version]
  \label{thm:finite}
There exists a positive integer $c$ and a function
$f: \mathscr{Gr}\to\nnn$ such that the following holds.
Let $(G_i,\vf_i)_{i\le k}$ be an epimorphic inverse sequence
of length $k\ge c$ of finite permutation groups with disjoint domains.
Assume $k\ge c+f(G_c)$.  Then the inverse limit  of this
system admits a zero-neutral zero-asymmetric 2-coloring.
\end{theorem}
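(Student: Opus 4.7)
The plan is to finitize the chain-descent argument underlying Theorem~\ref{thm:main1} and to quantify how many inverse-system levels it consumes. Since the sequence is finite and epimorphic, the inverse limit satisfies $\calG\cong G_k$, acting on $\bigsqcup_{i=0}^{k}\Omega_i$ via the compositions $\vf_{i,k}$. A zero-neutral, zero-asymmetric 2-coloring amounts to a 2-coloring $\gamma$ of $\bigcup_{i=1}^{k}\Omega_i$ whose stabilizer $G_{k,\gamma}$ satisfies $\vf_{0,k}(G_{k,\gamma})=1$. Because $\vf_{0,c}\colon G_c\onto G_0$ is surjective, every descending chain of subgroups of $G_0$ has length at most $\log_2|G_0|\le\log_2|G_c|$; this is the quantity the function $f$ must absorb.

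I would then replay the algorithm of Lemma~\ref{lem:reduction} along the chain~\eqref{eq:chain-of-classes}, but localized to the index $i=0$. Maintain the current image $F:=\vf_{0,k}(G_{k,\gamma_{\mathrm{so\,far}}})\le G_0$ and the current target class $\calR$. If $F\notin\calR$, pick a fresh level $m\in\{1,\dots,k\}$ and invoke 2-color-reducibility of the current $\calQ$ to $\calR$ (granted by Theorem~\ref{thm:program}) on the epimorphism $\vf_{0,m}^{-1}(F)\onto F$, obtaining a 2-coloring of $\Omega_m$ that strictly shrinks $F$. Each of the $k_0+2$ stages of~\eqref{eq:chain-of-classes} takes at most $\log_2|G_0|$ rounds, so the total is at most $(k_0+2)\log_2|G_c|$ levels. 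After the final stage, $F$ belongs to $\mathscr{Der}_0$, i.e.\ $F=1$, so the coloring is zero-asymmetric; it is zero-neutral by construction. Setting $f(G_c):=(k_0+2)\log_2|G_c|$ (possibly inflated by a universal factor for the bookkeeping below) and taking $c$ to be a universal constant absorbing the remaining overhead then yields the conclusion.

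The main obstacle is the class-membership hypothesis of 2-color-reducibility, which requires the source group $\vf_{0,m}^{-1}(F)\le G_m$ to lie in $\calQ$; in general $|G_m|$ greatly exceeds $|G_0|$ and $G_m$ need not belong to $\calQ$ at all. The infinite Lemma~\ref{lem:reduction} sidesteps this by pre-reducing \emph{every} $G_i$ to $\calQ$ before moving to the next stage. To reproduce this finitely, I would stratify $\{1,\dots,k\}$ into $k_0+2$ nested batches and, at stage $s$, first spend part of batch $s$ reducing the groups $G_m$ in the still-untouched batches into the current $\calQ$, then spend the remainder reducing $F$ itself. This organizational overhead multiplies the depth bound by a factor depending only on $k_0$, hence universal, so is absorbed into the definition of $f$ and preserves the asserted inequality $k\ge c+f(G_c)$.
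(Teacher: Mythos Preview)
Your overall plan---tracking the image $F\le G_0$ and shrinking it along the chain~\eqref{eq:chain-of-classes}---is sound for the first stage ($\mathscr{Gr}\to\mathscr{Sol}$), where the source class is all finite groups and every level is admissible. The gap is in your handling of the later stages. You correctly identify the obstacle: the $2$-color-reducibility hypothesis requires the \emph{source} group $\vf_{0,m}^{-1}(F)\le G_m$ to lie in the current class $\calQ$. Your proposed fix, however, does not deliver the bound you claim. To ``pre-reduce'' a group $G_m$ into $\calQ$ you must color levels above $m$, and each such coloring shrinks $G_m$ by at most a factor of~$2$; hence you may need up to $\log_2|G_m|$ levels just to prepare \emph{one} source. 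Since $|G_m|$ for $m>c$ is not bounded by any function of $G_c$, the assertion that this overhead ``multiplies the depth bound by a factor depending only on $k_0$'' is unjustified, and in fact the batching scheme as described cannot produce a bound of the form $k\ge c+f(G_c)$.

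The paper avoids this difficulty by \emph{not} using the generic $2$-color-reducibility framework for the solvable stages. Instead it invokes two results that are stronger than mere one-step reduction: Theorem~\ref{thm:bded-orb} says a \emph{single} $2$-coloring of a solvable group's own domain bounds its derived length by a universal constant~$2c'$, and Prop.~\ref{prop:derivedlength-intro} says a \emph{single} self-coloring drops the derived length by one. Thus, after spending $\le\log_2|G_c|$ levels above~$c$ to make $G_c$ solvable (where the source-class hypothesis is vacuous), one coloring of $\Omega_c$ jumps $G_c$ to derived length $\le c-1$, and then the levels $c-1,c-2,\dots,1$---a universal number of them---suffice to drive the derived length of $G_0$ to zero. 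This is why the paper can take $c=2c'+1$ and $f(G_c)=\lfloor\log_2|G_c|\rfloor$.
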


It is easy to see that our main technical result, 
Theorem~\ref{thm:main1}, is a consequence of 
Theorem~\ref{thm:finite}.  (Zero-neutrality is not
needed for this inference.)  The proof follows the lines
of the proof of Lemma~\ref{lem:reduction}.

It is also easy to see that our proof of Theorem~\ref{thm:main1}
in effect proved Theorem~\ref{thm:finite}.
\begin{proof}[Sketch of proof of Theorem~\ref{thm:finite}]
Take $c:=2c'+1$ where $c'$ is the constant
denoted by $c$ in Theorem~\ref{thm:bded-orbits}.
Set $f(n):= \lfloor\log_2(n)\rfloor$.

We color the domains $\Omega_i$ one at a time,
reducing the group $G_i$ and thereby $\calG$.
We call such a step a \emph{round},
and we conclude each round by
\emph{epimorphic reduction}
(Fact~\ref{fact:epiredux}).

First we color the domains $\Omega_i$ for $i > c$
to reduce $G_c$ to a solvable group.
While $G_c$ is not solvable, we reduce it to a
proper subgroup by coloring the next $\Omega_i$
(Theorem~\ref{thm:nonsolvable}).  This process clearly
terminates in $\le \log_2 |G_c|$ rounds and when it
terminates, $G_c$ is solvable.  But then,
all the $G_j$,\ $j\le c$, are solvable as well, by
epimorphic reduction.

Next we color $\Omega_c$, applying Theorem~\ref{thm:bded-orbits}
to $G_c$ and achieving
derived length $\le 2c'=c-1$ for $G_c$ and therefore for all $G_i$
for $i\le c$. 
Then, for $i=c-1$ down to $i=1$, we successively reduce
the derived length of $G_i$ to $\le i-1$,
using the procedure of Section~\ref{sec:reducing-derived-length}.
In the end, the derived length of $G_1$ is reduced to zero,
hence the same is true for $G_0$ without having colored
$\Omega_0$, yielding zero-asymmetry and zero-neutrality.
\end{proof}    

To make this result more effective, we need to replace
the bound $k\ge c+f(G_c)$ by a bound of that only
depends on $G=G_0$.  
\begin{conjecture}[asymmetric coloring of inverse limit---effective version]
 \label{conj:finite1}
 There exists a function
$g: {\mathscr{Gr}}\to\nnn$ such that the following holds.
Let $(G_i,\vf_i)_{i\le k}$ be an epimorphic inverse sequence of
length $k\ge g(G_0)$ of finite permutation groups with disjoint 
domains.  Then the inverse limit  of this
system admits a zero-neutral zero-asymmetric 2-coloring.
\end{conjecture}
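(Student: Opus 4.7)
The plan is to modify the algorithm underlying Theorem~\ref{thm:finite} so that each round strictly reduces an invariant attached to $G_0$, not to $G_c$. The obstruction in the current proof is that Phase~1 needs $\log_2|G_c|$ rounds to reduce $G_c$ to solvable, and $|G_c|$ can be made arbitrarily large by inserting large kernels into the transition maps even while $G_0$ is held fixed. To bypass this I would track $F_0 := \pi_0(\calG_{\Delta_{\text{cur}}}) \le G_0$ directly.

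For the nonsolvable-to-solvable phase: at each round, assuming the current $F_0$ is nonsolvable, I would pick a fresh index $c$ larger than all previously used indices, restrict $\vf_{0,c}$ to $F_c := \pi_c(\calG_{\Delta_{\text{cur}}})$, and apply Theorem~\ref{thm:nonsolvable} to the epimorphism $F_c \onto F_0$. This produces $\Psi_c \subseteq \Omega_c$ with the image of $(F_c)_{\Psi_c}$ in $F_0$ a proper subgroup of $F_0$. Augmenting $\Delta_{\text{cur}}$ by $\Psi_c$ strictly decreases $F_0$, so this phase terminates in at most $\log_2 |G_0|$ rounds.

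The main obstacle appears in the solvable phase. Once $F_0$ is solvable and non-trivial I would like to continue reducing $F_0$ to $1$ without coloring $\Omega_0$. The tool one would wish for is a solvable analog of Theorem~\ref{thm:nonsolvable}: for an epimorphism $\vf: F \onto H$ with $H$ solvable and non-trivial, a subset $\Delta$ of the domain of $F$ such that $\vf(F_\Delta) < H$. But such a statement is \emph{false} in general: if $F=S_3$ acts naturally on $\{1,2,3\}$ and $\vf$ is the sign map onto $\zzz/2$, then every setwise stabilizer $F_\Delta$ contains an odd permutation, ruling out a one-round solvable reduction. (This is also why Prop.~\ref{prop:derivedlength}, which is available on $\Omega_0$ itself, cannot be lifted through a single transition.)

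The way I would try to circumvent this is to exploit the depth of the inverse system rather than a single transition. For each successive abelian section in the derived series of $F_0$, I would simultaneously color several $\Omega_{i}$'s (with $i$ large) so that the combined constraint forces $\pi_0(\calG_{\Delta})$ into a prescribed proper normal subgroup. The central technical step to establish would be a multi-level reduction lemma: given $F_c \onto F_0$ with $F_0$ solvable non-trivial and an abelian quotient $F_0 \onto A$, there exist colorings $\Psi_{c_1} \subseteq \Omega_{c_1}, \dots, \Psi_{c_m} \subseteq \Omega_{c_m}$ (with $m$ bounded in terms of $|A|$) such that the combined projection in $A$ is trivial. Iterating through the derived series of $F_0$ would then add at most $(\log_2|G_0|) \cdot m$ further rounds, yielding an overall bound $g(G_0)$ polynomial (likely even quasipolynomial) in $|G_0|$. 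Proving that such a multi-level reduction exists, parallel to Theorem~\ref{thm:simple} but for abelian quotients and exploiting several levels at once, is the part I expect to be delicate; whether the bound on $m$ can be made independent of the action, or must take into account how $A$ is embedded in the projection lattice of the inverse system, is the decisive question.
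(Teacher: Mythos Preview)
The statement you are attempting is labeled a \emph{Conjecture} in the paper and is not proved there; it is explicitly left open, so there is no proof to compare against. Your Phase~1 argument (reducing $F_0=\pi_0(\calG_\Delta)$ to solvable in at most $\log_2|G_0|$ rounds via Theorem~\ref{thm:nonsolvable} applied to the epimorphism $F_c\onto F_0$) is sound and is a genuine sharpening of the paper's Phase~1, which tracks $G_c$ rather than $G_0$.

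Your Phase~2, however, is not a proof but a research sketch, as you yourself acknowledge. The ``multi-level reduction lemma'' you would need for abelian quotients is not established, and the $S_3\to\zzz_2$ obstruction you cite is precisely the one the paper records as item~3(a) in Section~\ref{sec:open} (stated there for $D_k$, $3\le k\le 5$). The paper's own machinery for the solvable phase (Theorem~\ref{thm:bded-orbits} followed by Prop.~\ref{prop:derivedlength}) operates on the domains $\Omega_c,\Omega_{c-1},\dots,\Omega_1$ directly rather than through transition maps from higher levels, and therefore does not transport to the setting your sketch requires. In short: your proposal correctly locates the difficulty but does not resolve it, and the conjecture remains open in the paper as well.
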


If this conjecture is true, here is a lower bound on the
function $g$.  Let $\asy(G)$ denote the asymmetric coloring number
of the permutation group $G$ (Def.~\ref{def:asy}).
\begin{prop}
If Conjecture~\ref{conj:finite1} holds for a function $g$ then
for all finite permutation groups $G$ we have
\begin{equation}
  g(G) \ge \log_2 \asy(G).
\end{equation}
\end{prop}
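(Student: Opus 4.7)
The plan is to prove the contrapositive via an explicit construction: given any finite permutation group $G\le\sym(\Omega)$, I build an epimorphic inverse sequence of length $g(G)$ ending at $G$ with disjoint domains, and convert the zero-neutral zero-asymmetric 2-coloring promised by Conjecture~\ref{conj:finite1} into an asymmetric $2^{g(G)}$-coloring of $\Omega$ for the original action of $G$. This would immediately yield $\asy(G)\le 2^{g(G)}$ and hence the claimed inequality.

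The construction is the \emph{constant sequence}: set $k:=g(G)$, take $k+1$ pairwise disjoint copies $\Omega_0,\Omega_1,\dots,\Omega_k$ of $\Omega$ (together with fixed bijections $\iota_i:\Omega\to\Omega_i$), put $G_i:=G$ acting on $\Omega_i$ through $\iota_i$, and let each transition map $\vf_i:G_i\to G_{i-1}$ be the identity on $G$. This is trivially an epimorphic inverse sequence, and its inverse limit $\calG$ consists of the constant strands, so $\calG\cong G$ acts diagonally on $\Omega^*:=\bigsqcup_{i=0}^k\Omega_i$. Assuming Conjecture~\ref{conj:finite1}, there exists a 2-coloring $\gamma:\Omega^*\to\{0,1\}$ which is monochromatic on $\Omega_0$ and whose stabilizer $\calG_\gamma$ fixes $\Omega_0$ pointwise.

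Now define a coloring $\gamma':\Omega\to\{0,1\}^k$ by
\[
  \gamma'(x):=\bigl(\gamma(\iota_1(x)),\gamma(\iota_2(x)),\dots,\gamma(\iota_k(x))\bigr).
\]
This uses at most $2^k$ colors. To verify $G$-asymmetry, suppose $\pi\in G$ preserves $\gamma'$. Then, by definition of $\gamma'$, for every $i\in\{1,\dots,k\}$ the diagonal copy of $\pi$ preserves $\gamma$ on $\Omega_i$; since $\gamma$ is constant on $\Omega_0$, it is preserved there as well. Thus the corresponding strand lies in $\calG_\gamma$, which fixes $\Omega_0$ pointwise; because $G$ acts faithfully on $\Omega_0\cong\Omega$, this forces $\pi=1$. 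Hence $\asy(G)\le 2^k=2^{g(G)}$, giving $g(G)\ge\log_2\asy(G)$.

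I do not expect any real obstacle here: the argument is a routine ``diagonal encoding'' and everything needed (disjointness, epimorphicity, the conclusion of the conjecture, faithfulness of $G$ on $\Omega$) is transparently provided by the constant-sequence setup. The only small point worth flagging is that zero-neutrality of $\gamma$ is what lets us reduce the color count from $2^{k+1}$ to $2^k$ (since $\Omega_0$ contributes no new information to the fingerprint), which is precisely what makes the bound $\log_2\asy(G)$ rather than $\log_2\asy(G)-1$.
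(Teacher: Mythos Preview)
Your proof is correct and follows essentially the same approach as the paper: build the constant inverse sequence $(G,G,\dots,G)$ with identity transitions, then encode the resulting zero-neutral zero-asymmetric 2-coloring of the disjoint union into a $2^k$-coloring of $\Omega$ via the $k$-tuple of colors on the non-zero coordinates. Your write-up is in fact more detailed than the paper's, which simply asserts that the encoded coloring is ``clearly'' asymmetric; your explicit verification and your remark on the role of zero-neutrality are both welcome additions.
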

\begin{proof}
Let $G\le \sym(\Omega)$.  
Consider the length-$k$ inverse system $(G,G,\dots,G)$ with the identity
serving as transition homomorphisms.  So the inverse limit $\calG$
is the diagonal of the direct product $G^{k+1}$, acting on
$\Omega\times \{0,\dots,k\}$.  Assume there is a zero-asymmetric
zero-neutral 2-coloring $\gamma$ of $\calG$.  Now define the
coloring $\delta$ of $\Omega$ by setting
$\delta(x)=(\gamma(x,1),\dots,\gamma(x,k))$ for $x\in\Omega$.
It should be clear that $\delta$ is an asymmetric coloring of $G$
with $\le 2^k$ colors, so $\asy(G)\le 2^{g(G)}$.
\end{proof}  

One might ask, how close is this lower bound to the true
upper bound (if one exists).  I would risk the following
bold conjecture.

\begin{conjecture}[asymmetric coloring of inverse limit---polylog bound]
  \label{conj:finite2}
There exists a polynomial $p$ such that
Conjecture~\ref{conj:finite1} holds with
$g(G)=p(\log(\asy(G)))$.
\end{conjecture}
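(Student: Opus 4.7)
The plan is to refine each phase of the proof of Theorem~\ref{thm:finite}, replacing bounds of the form $\log_2|G_c|$ by bounds polynomial in $\log\asy(G_0)$. That proof divides the work into three phases: (i) reducing $G_c$ to a solvable group by iterated application of Theorem~\ref{thm:nonsolvable}, (ii) collapsing the derived length to a constant via Theorem~\ref{thm:bded-orb}, and (iii) stripping one level of the derived series per round via Prop.~\ref{prop:derivedlength-intro}. Phases~(ii) and~(iii) already cost only a constant number of rounds, so the real bottleneck is phase~(i), where the naive bound is $\log_2|G_c|$ rounds and $|G_c|$ is completely unconstrained by $G_0$.

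My first step would be to establish a \emph{parallel} version of Theorem~\ref{thm:simple}: given an epimorphism $\vf:G\onto T_1\times\dots\times T_r$ of a permutation group onto a direct product of nonabelian simple groups, produce a single subset $\Delta\subseteq\Omega$ that strictly reduces the image of $\vf(G_\Delta)$ inside \emph{every} factor $T_i$ simultaneously, not just inside one of them. Iterating such a result, together with a refined Subdirect Product Lemma (Lemma~\ref{lem:subdirect}) that tracks how simple quotients combine, ought to permit halving the non-solvable radical of $G_c$ per round rather than merely dropping it to a proper subgroup. A careful bookkeeping would then bound the number of rounds in phase~(i) by a polynomial in the number of nonabelian composition factors of $G_c$ that act non-trivially on some $\Omega_i$. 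The second step would be to show that this latter quantity is itself polylogarithmic in $\asy(G_0)$; this is where the assumption that the $\Omega_i$ are \emph{disjoint} and the strands glue compatibly through the $\vf_{i,j}$ must be exploited, propagating the meager symmetry-breaking budget of $G_0$ upward through the tower.

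The main obstacle will be that the kernel of $G_c\onto G_0$ can contain arbitrarily many nonabelian simple composition factors even when $G_0$ is trivial, and every such factor must still be broken by the coloring. Since the lower bound $g(G)\ge\log_2\asy(G)$ lies far below $\log|G_c|$ in general, the coloring must \emph{share work} across levels: a single subset of $\Omega_i$ must simultaneously kill composition factors of $G_i$ that survive down to $G_0$ and factors that die in the kernel. Overcoming this likely requires a new structural theorem relating the complexity of an epimorphic inverse system of finite permutation groups with disjoint domains to the asymmetric coloring number of its terminal group---essentially a converse to the approximation procedure developed in Sec.~\ref{sec:approximation}. This is where I would concentrate the technical effort, and where the sharp polylogarithmic bound of Conjecture~\ref{conj:finite2}, if it holds, would ultimately have to come from.
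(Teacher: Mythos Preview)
The statement you are attempting is a \emph{conjecture}, not a theorem: the paper explicitly leaves it open, calling it a ``bold conjecture.'' The paper proves only the special case of inverse systems of \emph{solvable} groups, where $\asy(G)\le 5$ by Lemma~\ref{lem:solv5color} and the bound $g(G)\le c$ follows immediately from the solvable phase of the proof of Theorem~\ref{thm:finite}. There is no proof in the paper for the general case to compare your proposal against.

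Your proposal is accordingly not a proof but a research outline, and you are candid about this. You correctly identify phase~(i) as the bottleneck and correctly diagnose the central obstacle: the kernel of $G_c\onto G_0$ may carry arbitrarily many nonabelian simple composition factors even when $G_0$ is trivial (so $\asy(G_0)=1$), and each must be killed by coloring domains above level~$c$. Your suggested ``parallel'' version of Theorem~\ref{thm:simple}---a single $\Delta$ reducing all simple factors of a direct-product target at once---is a natural idea, but note that even if it worked perfectly it would bound the number of rounds by the \emph{composition length} of the non-solvable part of $G_c$, which is still unrelated to $\asy(G_0)$. The ``second step'' you sketch, bounding that composition length by a polylog function of $\asy(G_0)$, is simply false as stated (take $G_0=1$ and $G_c$ a large power of $A_5$), so the real content would have to lie in the ``sharing work across levels'' mechanism you allude to at the end. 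That mechanism is left entirely unspecified, and it is precisely the missing idea that makes Conjecture~\ref{conj:finite2} open.
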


Conjecture~\ref{conj:finite2} is true for inverse systems of
solvable groups.  Since solvable groups have bounded
asymmetric coloring number (Lemma~\ref{lem:solv5color}),
this means for solvable groups $G$ the quantity $g(G)$ should
be bounded, and indeed it is.

\begin{prop}
There exists a constant $c$ such that the following holds
for all inverse sequences of solvable permutation groups
with disjoint domains.  If the length of the sequence
is at least $c$ then the inverse limit admits a
zero-asymmetric zero-neutral 2-coloring.
We can take $c:=2c'+1$ where $c'$ is the constant
denoted by $c$ in Theorem~\ref{thm:bded-orbits}.
\end{prop}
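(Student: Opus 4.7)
My plan is to build the desired 2-coloring of $\Omega=\bigsqcup_{i=0}^{k}\Omega_i$ gradually, coloring one $\Omega_i$ at a time in descending order $i=c,c-1,\dots,1$, while leaving $\Omega_0$ monochromatic to secure zero-neutrality. After each round I would invoke epimorphic reduction (Fact~\ref{fact:epiredux}) so that the system remains epimorphic, and I would maintain as a loop invariant a bound on the derived length of the current $G_i$ that decreases by one per round. Setting $c:=2c'+1$ with $c'$ the constant in Theorem~\ref{thm:bded-orbits}, the choice $k\ge c$ ensures enough rooms to the right of $\Omega_0$ to execute all the reduction steps.

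In the opening round I would color $\Omega_c$: by Theorem~\ref{thm:bded-orbits} there is $\Delta_c\subseteq\Omega_c$ such that every orbit of $(G_c)_{\Delta_c}$ has length at most $c'$, which forces the derived length of $(G_c)_{\Delta_c}$ to be strictly less than $2c'=c-1$. After performing the corresponding $\Lambda$-reduction and then an epimorphic reduction, every $G_j$ with $j\le c$ becomes a homomorphic image of the new $G_c$, hence also has derived length at most $c-1$, establishing the invariant for $i=c-1$.

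Next, for $i=c-1,c-2,\dots,1$ in turn, I would run a derived-length reduction step. The invariant says that at the start of round $i$ the current $G_i$ has derived length at most $i$; if it is zero the round is vacuous, and otherwise Prop.~\ref{prop:derivedlength} supplies a subset $\Delta_i\subseteq\Omega_i$ such that $(G_i)_{\Delta_i}$ has derived length at most $i-1$. After the ensuing epimorphic reduction, every $G_j$ with $j\le i$ is a homomorphic image of $(G_i)_{\Delta_i}$ and so is also of derived length at most $i-1$, restoring the invariant for the next round. When round $i=1$ finishes, $G_1$ has derived length $0$, i.e., $G_1=1$; then $G_0=\vf_{0,1}(G_1)=1$ as well, without having colored $\Omega_0$.

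Setting $\gamma$ to be the union of the indicator 2-colorings of the $\Delta_i$ for $i\ge 1$ and a constant color on $\Omega_0$, the resulting coloring is zero-neutral by construction. Its stabilizer $\calG_\gamma$ coincides with the inverse limit of the final reduced system produced above, whose projection $\pi_0$ lands in the final $G_0=1$; hence $\calG_\gamma$ fixes $\Omega_0$ pointwise, i.e., $\gamma$ is zero-asymmetric. There is no substantive obstacle, since all the real work is absorbed into Theorem~\ref{thm:bded-orbits} and Prop.~\ref{prop:derivedlength}; the only routine point to track through the argument is that derived-length bounds are preserved under homomorphic images, which is what allows the bound established on $G_i$ in each round to propagate to every $G_j$ with $j\le i$ via the epimorphic reduction.
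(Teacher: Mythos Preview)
Your proposal is correct and follows essentially the same approach as the paper: the paper's own proof simply refers back to the second (solvable) phase of the proof of Theorem~\ref{thm:finite}, which is exactly the procedure you describe---color $\Omega_c$ via Theorem~\ref{thm:bded-orbits} to bound the derived length, then iterate Prop.~\ref{prop:derivedlength} on $\Omega_{c-1},\dots,\Omega_1$ to drive the derived length of $G_1$ (and hence $G_0$) to zero, leaving $\Omega_0$ untouched. Your invariant and bookkeeping match the paper's sketch.
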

\begin{proof}
We just proved this in the second (solvable) phase of the
proof of Theorem~\ref{thm:finite}.
\end{proof}

\section{Combinatorial relaxation of symmetry: CFSG-free proofs}
\label{sec:cfsg-free}
One of the key facts underlying our result was the following.
\begin{theorem}[\cite{saxl}]  \label{thm:saxl}
All but a finite number of primitive permutation groups,
other than the symmetric and alternating groups
in their natural action, admit an asymmetric 2-coloring.
\end{theorem}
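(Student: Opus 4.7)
The plan is to apply the counting argument of Prop.~\ref{prop:motionlemma} (the ``motion lemma'' of the introduction). That lemma, proved by a union bound, asserts that a finite permutation group $G\le\sym(\Omega)$ of degree $n$ admits an asymmetric $2$-coloring whenever $|G| < 2^{\mu(G)/2}$. The reason is elementary: for $\sigma\in G$ the number of $2$-colorings fixed by $\sigma$ equals $2^{c(\sigma)}$, where $c(\sigma)$ is the number of cycles of $\sigma$ on $\Omega$ (fixed points included); since every cycle of length $\ell\ge 2$ contributes one cycle per $\ell$ moved points, $c(\sigma) \le n - |\supp(\sigma)|/2 \le n - \mu(G)/2$ for $\sigma\ne 1$. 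Summing over $G\setminus\{1\}$, the number of $2$-colorings preserved by some non-identity element is at most $(|G|-1)\cdot 2^{n-\mu(G)/2}$, which is below the total $2^n$ under the motion-lemma hypothesis.

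The theorem is thus reduced to verifying $|G| < 2^{\mu(G)/2}$ for all but finitely many primitive $G\le\sym(\Omega)$ with $G\not\ge\alt(\Omega)$. I would invoke two CFSG-backed facts about such primitive groups. First, a linear-type lower bound on motion: there is an absolute constant $c_1>0$ such that $\mu(G)\ge c_1 n$ for all sufficiently large such $G$ (in fact even $\mu(G)\gg (\log n)^2$ would suffice). Second, Cameron's upper bound on the order~\cite{cameron81}: apart from an explicit list of ``large'' wreath-product and diagonal-type primitive groups, $|G|\le n^{c_2\log n}$. For the finitely many large families in Cameron's list one verifies the motion-lemma hypothesis directly, case by case; for all remaining primitive $G$ with $n$ large, combining the two bounds yields
\[
|G|\;\le\; 2^{c_2(\log_2 n)^2}\;\ll\; 2^{c_1 n/2},
\]
so the motion-lemma hypothesis holds and the asymmetric $2$-coloring exists.

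The hard part, and the source of the dependence on CFSG, is securing the two ingredient bounds. Both rest on the O'Nan--Scott classification of primitive groups: one runs through the O'Nan--Scott classes (affine, almost simple, product action, simple diagonal, twisted wreath), invoking CFSG in the almost simple case to obtain the linear lower bound on motion and the polylogarithmic-exponent bound on order. The finitely many small-degree primitive groups that fall outside the asymptotic regime are absorbed into the ``finite number of exceptions'' conclusion of the theorem and were later enumerated explicitly by Seress~\cite{seress97}. As the introduction remarks, the Sun--Wilmes theorem~\cite{sunwilmes} now supplies a CFSG-free proof of Cameron's order bound; a matching CFSG-free lower bound on $\mu(G)$ would yield an entirely CFSG-free proof of Theorem~\ref{thm:saxl}.
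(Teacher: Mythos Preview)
Your overall strategy---combine the Motion Lemma (Prop.~\ref{prop:motionlemma}) with an upper bound on $|G|$ and a lower bound on $\mu(G)$---is correct and matches the paper's account of the \cite{saxl} argument. Two points, however, deserve comment.

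First, you invoke a \emph{linear} motion bound $\mu(G)\ge c_1 n$, derived via O'Nan--Scott and CFSG. This is the 1991 Liebeck--Saxl result \cite{liebecksaxl-mindeg}, which postdates \cite{saxl} and is far stronger than needed. As the paper explains, the CFSG dependence of \cite{saxl} enters solely through Cameron's order bound (Prop.~\ref{prop:cnsbound}); the requisite motion bound is only of order $\sqrt{n}$ and is available from elementary sources: Theorem~\ref{thm:annals} together with Obs.~\ref{obs:uni-motion} for groups that are not 2-transitive, and Bochert's 1897 theorem (or the lemma in \cite{saxl} converting order bounds to motion bounds) for 2-transitive groups. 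Your concluding remark---that a CFSG-free motion bound is the outstanding obstacle to a CFSG-free proof---is therefore off the mark: such bounds already exist, and Section~\ref{subsec:cfsg-free} shows that replacing Cameron by Sun--Wilmes completes a CFSG-free proof.

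Second, there is a genuine gap in your handling of the ``large'' Cameron families. You propose to ``verify the motion-lemma hypothesis directly'' for them, but for several of these families the hypothesis $|G| < 2^{\mu/2}$ is \emph{false}. Take $S_m$ in its action on unordered pairs (the automorphism group of the triangular graph $T(m)$): here $n = \binom{m}{2}$, $|G| = m!$, and $\mu = 2m-4$, so $2^{\mu/2} = 2^{m-2} \ll m!$ for all $m\ge 5$. These groups do admit asymmetric $2$-colorings---an asymmetric $2$-coloring of $\binom{[m]}{2}$ is precisely an asymmetric graph on $m$ vertices, and such graphs exist for $m\ge 6$---but establishing this requires an explicit construction, not the counting argument. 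The same issue arises for the lattice-graph groups. The paper's framing avoids this pitfall by stating the order bound (Prop.~\ref{prop:cnsbound}) only for ``non-top'' groups; the triangular and lattice cases are then handled separately by direct construction.
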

The original proof of Theorem~\ref{thm:saxl}
rests on the Classification of Finite Simple Groups (CFSG).

In this section we address the following two questions.

\mn
\begin{itemize}
\item[($\alpha$)] 
Can one avoid the use of the CFSG in the proof of Theorem~\ref{thm:saxl}?
\item[($\beta$)]
Is there a combinatorial generalization of Theorem~\ref{thm:saxl},
\ie, an asymmetric 2-colorability result for
a class of combinatorial structures with no symmetry
assumptions, that includes Theorem~\ref{thm:saxl}?
\end{itemize}
Question ($\alpha$) was already raised by
Cameron \emph{et al.}~\cite{saxl}
and was reiterated by Imrich \emph{et al.}
as~\cite[Question 1]{watkins15}.

We point out that a positive answer to both questions
follows from a recent breakthrough
by Xiaorui Sun and John Wilmes~\cite{sunwilmes}
on the number of automorphisms of primitive coherent
configurations (see Theorems~\ref{thm:sunwilmes}
and~\ref{thm:sunwilmes2} below).

\subsection{CFSG-free proof of the Cameron--Neumann--Saxl Theorem}
\label{subsec:cfsg-free}
Recall that the \emph{line-graph} of a graph $X=(V,E)$
is the graph $L(X)$ with vertex set $E$ where two edges
$e,f\in E$ (as vertices of $L(X)$) are adjacent in $L(X)$
if they share a vertex in $X$.  The line-graphs of the cliques
$K_r$ are called \emph{triangular graphs}, denoted $T(r)$.
The graph $T(r)$ has $\binom{r}{2}$ vertices, $r!$ automorphisms,
and motion $2r-4$.  The socle has index 2 in $\aut(T(r))$.
The line-graphs of balanced bipartite cliques 
$K_{r,r}$ are called \emph{lattice graphs}, denoted $L_2(r)$.
The graph $L_2(r)$ has $r^2$ vertices, $2(r!)^2$ automorphisms,
and motion $2r$.  The socle has index 8 in $\aut(T(r))$.

The original proof of Theorem~\ref{thm:saxl} depends on CFSG through
the following result, a special case of much more detailed
result in~\cite{cameron81}.  Let $\soc(H)$ denote the socle
of the group $H$.  Let us say that $G$ is a \emph{top group}
if either $A_n\le G\le S_n$ or $\soc(H)\le G\le H$ where $H$ is
the automorphism group of a triangular graph or a lattice graph.
\begin{prop}[Cameron + CFSG]  \label{prop:cnsbound}
Let $G\le S_n$ be a primitive permutation group.
Assume $|G|\ge 2^{\sqrt{n/2}}$.  If $n$ is sufficiently
large then $G$ is a top group.
\end{prop}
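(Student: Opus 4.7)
The plan is to bypass CFSG by appealing to the Sun--Wilmes structure theorem for primitive groups of large base size (equivalently, their structure theorem for primitive coherent configurations), which is stated later in this subsection as Theorem~\ref{thm:sunwilmes2}.

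First I would convert the order hypothesis $|G|\ge 2^{\sqrt{n/2}}$ into a lower bound on the \emph{base size} of $G$. Recall that a base of $G\le\sym(\Omega)$ is a subset $B\subseteq\Omega$ with $G_{(B)}=1$; if $|B|=b$ then a permutation in $G$ is determined by its image on $B$, so $|G|\le n^b$. Writing $b(G)$ for the minimum base size, the hypothesis gives
\[
  b(G)\ \ge\ \frac{\sqrt{n/2}}{\log_2 n},
\]
which comfortably exceeds $n^{1/3}$ for all sufficiently large $n$.

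Next I would pass to the orbital coherent configuration $\mfX(G)$, whose color classes are the orbits of $G$ on $\Omega\times\Omega$; since $G$ is primitive, $\mfX(G)$ is a PCC, and $G\le\aut(\mfX(G))$, so the base size of $\aut(\mfX(G))$ inherits the lower bound just obtained. At this point I would invoke the Sun--Wilmes classification: for $n$ sufficiently large, any primitive $G\le S_n$ with $b(G)>n^{1/3}$ must satisfy either $A_n\le G\le S_n$, or $\soc(H)\le G\le H$ with $H=\aut(T(r))$ and $n=\binom{r}{2}$, or $\soc(H)\le G\le H$ with $H=\aut(L_2(r))$ and $n=r^2$. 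This matches the definition of a top group verbatim, completing the proof.

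The main obstacle is entirely absorbed into the Sun--Wilmes theorem itself, whose proof is a delicate combinatorial structure theory of PCCs built on Babai's edge-distinguishing techniques (Theorem~\ref{thm:annals}); the point of the present exercise is exactly that Sun--Wilmes replaces the CFSG-dependent classification of Cameron~\cite{cameron81} by an elementary argument at the combinatorial level. The residual work above is routine: Step~1 is a one-line computation, and in the final step one must only check that the threshold $\sqrt{n/2}/\log_2 n$ eventually dominates whatever polylogarithmically-inflated $n^{1/3}$ bound Sun--Wilmes requires---which it does, since $\sqrt{n}/\log n$ asymptotically dwarfs $n^{1/3}\log^c n$ for any fixed $c$.
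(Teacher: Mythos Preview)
Your argument has a genuine gap: it does not handle the doubly transitive case. You pass to the orbital configuration $\mfX(G)$ and invoke Theorem~\ref{thm:sunwilmes2}, but that theorem concerns \emph{UPCCs}, i.e., PCCs of rank $\ge 3$. When $G$ is doubly transitive, $\mfX(G)$ is the rank-$2$ clique, so Theorem~\ref{thm:sunwilmes2} is silent, and your conclusion ``$A_n\le G\le S_n$, or $G$ sits between the socle and the automorphism group of $T(r)$ or $L_2(r)$'' is not justified. The Sun--Wilmes work, as quoted in this paper (Theorems~\ref{thm:sunwilmes} and~\ref{thm:sunwilmes2}), explicitly excludes doubly transitive groups; you cannot read off the ``$A_n\le G$'' alternative from it.

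The paper closes exactly this gap: after noting that Theorem~\ref{thm:sunwilmes} handles the uniprimitive case, it invokes the elementary quasipolynomial bound on doubly transitive groups other than $A_n,S_n$ (Theorem~\ref{thm:doubly}), which for large $n$ forces $|G|<2^{\sqrt{n/2}}$ unless $G\ge A_n$. Your proof would be complete once you insert this case split. Incidentally, the detour through base size is unnecessary: both Theorem~\ref{thm:sunwilmes} and Theorem~\ref{thm:sunwilmes2} are stated in terms of $|G|$ (respectively $|\aut(\mfX)|$), so you can compare $2^{\sqrt{n/2}}$ directly with $\exp(cn^{1/3}(\log n)^{7/3})$ without mentioning bases at all.
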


The following elementary result by Sun and Wilmes~\cite{sunwilmes},
a consequence of their result on coherent configurations
(Theorem~\ref{thm:sunwilmes2}), implies Prop.~\ref{prop:cnsbound}.

\begin{theorem}[Sun--Wilmes, elementary]  \label{thm:sunwilmes}
There exists $c > 0$ such that the following holds.
Let $G\le S_n$ be a primitive but not doubly transitive
permutation group.
Assume $|G|\ge \exp(c(n^{1/3}(\log n)^{7/3}))$.
Then $G$ is a top group.
\end{theorem}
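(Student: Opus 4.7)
The plan is to deduce Theorem~\ref{thm:sunwilmes} from the Sun--Wilmes upper bound on automorphism groups of primitive coherent configurations (Theorem~\ref{thm:sunwilmes2}) by passing from the permutation group $G$ to the coherent configuration it induces. First I would attach to any transitive $G\le\sym(\Omega)$ its \emph{orbital configuration} $\mfX(G)$ on $\Omega$, whose colors on $\Omega\times\Omega$ are the $G$-orbitals. By construction $G\le\aut(\mfX(G))$; primitivity of $G$ is equivalent to primitivity of $\mfX(G)$; and the assumption that $G$ is \emph{not} doubly transitive is exactly the assumption that $\mfX(G)$ is nontrivial (rank $\ge 3$). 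So the hypotheses of the theorem place us in the setting of a nontrivial PCC $\mfX(G)$ on $n$ points whose automorphism group has order at least $\exp(c\cdot n^{1/3}(\log n)^{7/3})$.

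Next I would invoke Theorem~\ref{thm:sunwilmes2}, which asserts that $|\aut(\mfX)|\le \exp(c'\cdot n^{1/3}(\log n)^{7/3})$ for every nontrivial PCC on $n$ points, with the only exceptions being the Johnson schemes $J(r,2)$ (whose underlying graph is the triangular graph $T(r)$) and the Hamming schemes $H(2,r)$ (whose underlying graph is the lattice graph $L_2(r)$). Choosing $c$ strictly larger than $c'$ and $n$ sufficiently large, the assumed lower bound on $|G|$ forces $\mfX(G)$ to be one of these two exceptional PCCs, so that $G$ sits inside $\aut(T(r))$ or inside $\aut(L_2(r))$ for some $r$.

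It remains to promote the inclusion $G\le\aut(T(r))$ or $G\le\aut(L_2(r))$ to containment of the socle, which is the definition of being a top group. In the triangular case, $\aut(T(r))\cong S_r$ (for $r\ge 5$) acting on the $n=\binom{r}{2}$ pairs; since $\mfX(G)=J(r,2)$ has rank $3$, the group $G$ is $2$-transitive, hence primitive, on $[r]$, and so the elementary primitive-group bound of~\cite{annals} applies and gives $|G|\le \exp(O(\sqrt{r}(\log r)^2))=\exp(O(n^{1/4}(\log n)^2))$ unless $G\supseteq A_r$. For $n$ large, this is far below the assumed lower bound, forcing $A_r=\soc(\aut(T(r)))\le G$. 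The lattice case is handled identically, with $\aut(L_2(r))\cong S_r\wr S_2$, socle $A_r\times A_r$, and the same comparison of exponents; this yields $\soc(\aut(L_2(r)))\le G$, and $G$ is a top group.

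The main obstacle is, of course, Theorem~\ref{thm:sunwilmes2} itself: all the genuine combinatorial work --- both the $\exp(O(n^{1/3}(\log n)^{7/3}))$ upper bound on $|\aut(\mfX)|$ and the precise identification of the exceptional PCCs as the Johnson and Hamming schemes --- is imported from Sun--Wilmes. The post-processing from an exceptional PCC to a top permutation group, sketched above, relies only on the elementary $\exp(O(\sqrt{n}(\log n)^2))$ primitive-group bound of~\cite{annals} and basic manipulations with orbital configurations, and is in this sense routine.
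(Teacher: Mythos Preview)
The paper does not supply its own proof of this theorem; it cites the result to Sun--Wilmes and remarks only that it is ``a consequence of their result on coherent configurations (Theorem~\ref{thm:sunwilmes2}).'' Your derivation is exactly the route the paper indicates, and the overall argument is correct.

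Two minor points of precision. In the triangular case, what you get immediately from $\mfX(G)=J(r,2)$ is that $G$ is transitive on unordered pairs, i.e., $2$-homogeneous on $[r]$; this already implies $G$ is primitive on $[r]$, which is all the bound from~\cite{annals} requires. The stronger claim of $2$-transitivity is not needed (and would take an extra line to justify). In the lattice case, ``handled identically'' is a slight overstatement: one should intersect $G$ with $S_r\times S_r$ (index at most $2$), project to a factor, observe that this projection is transitive on $[r]$ and in fact primitive (since $G$ is transitive on the rows of the $r\times r$ grid and the row partition is preserved), and then apply~\cite{annals} to that projection. With these tweaks the argument goes through.
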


This result implies Prop.~\ref{prop:cnsbound}
except in the case that the group is
doubly transitive.  In that case, however,
known elementary combinatorial bounds
show that the order of the group is
quasipolynomially bounded.

\begin{theorem}[\cite{inventiones, pyber-doubly}] \label{thm:doubly}
Let $G\le S_n$ be a doubly transitive group
and assume $G\ngeq A_n$.  Then
$|G| \le \exp(O(\log n)^4)$.
\end{theorem}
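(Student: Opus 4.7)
The plan is to bound $|G|$ via its \emph{base size}. A base for a permutation group $G \le S_n$ is a subset $B$ of the points whose pointwise stabilizer $G_{(B)}$ is trivial; enumerating $B = \{x_1,\dots,x_b\}$ and using the chain of stabilizers gives
\[
  |G| \;=\; \prod_{i=1}^{b} [G_{(\{x_1,\dots,x_{i-1}\})} : G_{(\{x_1,\dots,x_i\})}] \;\le\; n^b,
\]
so a base of size $b = O((\log n)^3)$ yields $|G| \le \exp(O((\log n)^4))$, as required. My approach would be to build the base greedily: at each step adjoin a point $x_i$ lying in a \emph{longest} nontrivial orbit of the current pointwise stabilizer $G_{(B_{i-1})}$, so that $|G_{(B_i)}|$ shrinks by the factor equal to that orbit length. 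Since $G$ is doubly transitive, the first orbit is all of $\Omega\setminus\{\alpha\}$ and has length $n-1$; the task is to show the subsequent orbit lengths remain at least $n^{1/(\log n)^{O(1)}}$ throughout the construction, until the stabilizer is trivial.

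The engine for this is Theorem~\ref{thm:annals}, the author's CFSG-free combinatorial bound on the minimal degree of a primitive permutation group, and more generally on the motion of a primitive coherent configuration. After fixing a set of base points $B$, the partition of $\Omega \setminus B$ into 2-orbits of $G_{(B)}$ naturally carries the structure of a coherent configuration, to whose primitive components Theorem~\ref{thm:annals} applies, forcing long orbits (a short orbit would furnish a non-identity element of small support violating the minimal-degree lower bound). The 2-transitivity of the original $G$ guarantees that this induced configuration is nontrivially primitive at the first step, and one inductively propagates the primitivity -- or reduces to imprimitive cases handled through their block systems -- at each subsequent round. This is essentially the strategy carried out by Pyber~\cite{pyber-doubly} building on the structure theory of~\cite{inventiones}.

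The hard part, and the reason the final exponent is $(\log n)^4$ rather than something smaller, is bookkeeping the several sources of logarithmic losses: the depth of the greedy recursion, the polylog gap in the minimal-degree bound of Theorem~\ref{thm:annals}, the loss incurred in passing from a minimal-degree bound to a lower bound on orbit lengths, and the final conversion $|G| \le n^b$ each contribute a factor of $\log n$. These must be kept multiplicative in the exponent of $\log n$ rather than allowed to compound with the $\sqrt{n}$-type bounds available for general (non-2-transitive) primitive groups. The affine case $n = p^d$ with $G \le \agl(d,p)$ is handled separately using a linear basis of $\fff_p^d$ as a base of size $d+1 = O(\log n)$, yielding $|G| \le n^{O(\log n)} = \exp(O((\log n)^2))$ directly, which is well within the stated bound.
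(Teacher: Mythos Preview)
The paper does not give its own proof of Theorem~\ref{thm:doubly}; it is quoted as a result from~\cite{inventiones} and~\cite{pyber-doubly}, with only a parenthetical remark that~\cite{inventiones} gave the weaker bound $\exp\exp(O(\sqrt{\log n}))$ and that Pyber~\cite{pyber-doubly} sharpened this to the stated exponent using the same framework.

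Your sketch is in the right spirit and does reflect the strategy of those two papers: bound the base size by a greedy descent, use the coherent configuration carried by successive point stabilizers, and invoke the distinguishing-number bound of~\cite{annals} (Theorem~\ref{thm:annals} here) to force long orbits at each step. A couple of points deserve tightening, however. First, Theorem~\ref{thm:annals} concerns UPCCs, i.e., nontrivial primitive coherent configurations of rank $\ge 3$; a $2$-transitive group itself has the trivial rank-$2$ configuration, so the theorem only bites \emph{after} passing to a point stabilizer, and even then the resulting configuration need not be primitive---handling this imprimitivity is precisely where the substance of~\cite{inventiones} lies, and your phrase ``inductively propagates the primitivity'' glosses over the main difficulty. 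Second, your reference to ``the polylog gap in the minimal-degree bound of Theorem~\ref{thm:annals}'' is misplaced: that theorem gives a $\sqrt{n}$-type lower bound with no polylog slack; the logarithmic losses you tally come entirely from the recursion depth and the passage from minimal degree to orbit length, not from any looseness in the UPCC bound.
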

\noindent
(The result in \cite{inventiones} gives the weaker upper
bound $\exp\exp(O(\sqrt{\log n}))$, which would also suffice
in our context since this quantity is less than $\exp(n^\epsilon)$
for all $\epsilon > 0$ and all sufficiently large $n$, so
it is much smaller than the threshold in Prop.~\ref{prop:cnsbound}.
The improved bound stated above was obtained 
in~\cite{pyber-doubly} using the framework of \cite{inventiones}.)
This completes the list of ingredients of an elementary
proof of Theorem~\ref{thm:saxl}.
\hfill $\Box$

\medskip
While a lemma in \cite{saxl} shows that the upper bound
in Theorem~\ref{thm:doubly} on the order of doubly transitive
groups other than $A_n$ and $S_n$ 
implies a nearly linear lower bound on the minimal
degree of these groups, we should mention that a
stronger, linear lower bound has been known for more
than 120 years.  The following result was proved by
Alfred Bochert in the 19th century by a lovely
combinatorial argument~\cite{bochert}.
\begin{theorem}[Bochert, 1897]
Let $G\le S_n$ be a doubly transitive group
and assume $G\ngeq A_n$.  Then the
minimal degree of $G$ is
$\mu(G) \ge n/8$.  For $n > 216$, the
lower bound improves to $n/4$.
\end{theorem}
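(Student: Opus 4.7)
The plan is to argue by contradiction. Suppose $G\le S_n$ is doubly transitive, $G\ngeq A_n$, and some $\sigma\in G\setminus\{1\}$ has $|\supp(\sigma)|=m=\mu(G) < n/4$. Write $S=\supp(\sigma)$ and $F=\Omega\setminus S$, so $|F| > 3n/4$. The end goal is to produce, via group-theoretic manipulation inside $G$, a non-identity element whose support has size at most $3$; Jordan's classical theorem on primitive groups containing a $3$-cycle then forces $G\ge A_n$, contradicting the hypothesis.

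First I would set up a Burnside-style double count on conjugates of $S$. Because $G$ is transitive on ordered pairs of distinct points, the expected value of $|h(S)\cap S|$ as $h$ ranges over $G$ equals $m(m-1)/(n-1)$, which is strictly less than $m/3$ under the assumption $m < n/4$. Hence many $h\in G$ satisfy $1\le |h(S)\cap S|\le \lfloor m/3\rfloor$, and by varying $h$ one can prescribe the overlap size within a controlled range.

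Next, for such an $h$, I would analyze $\tau:=h\sigma h^{-1}$ and the commutator $\rho:=[\sigma,\tau]$. The support of $\rho$ is contained in $S\cup h(S)$. A careful combinatorial accounting of which points in $S\cup h(S)$ are actually moved by $\rho$ shows that either $|\supp(\rho)| < m$---an immediate contradiction with minimality of $\mu(G)$---or $\rho=1$. In the latter case, $\sigma$ commutes with the chosen conjugate; by varying $h$ so that the overlaps sweep through the possibilities, either one eventually extracts a non-identity element of support $\le 3$ (done), or one concludes that $\sigma$ commutes with enough of its conjugates that the normal closure $\langle\sigma^G\rangle$ is abelian. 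In the latter case $G$ is primitive with an abelian regular normal subgroup, hence an affine group of prime-power degree, a restricted setting easily handled by inspection under $n > 216$ and $\mu(G) < n/4$.

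The hard part will be the sharp constant $n/4$: a naive commutator argument yields only $\mu(G)\ge c\sqrt{n}$ or similar. To sharpen, I would supplement the averaging argument with an orbit count on ordered triples (exploiting double transitivity a second time to bound the number of conjugates of $\sigma$ that jointly fix a given pair in $F$), turning two conjugates of $\sigma$ whose supports meet in exactly one point into an explicit element of support $\le 3$ by tracking the orbit structure of $\langle\sigma,\tau\rangle$ on $S\cap h(S)$. The side condition $n > 216$ is precisely the threshold at which the averaging step yields enough flexibility to force such an overlap pattern; for $5\le n\le 216$ one falls back on the weaker bound $\mu(G)\ge n/8$, which requires only the looser estimate $m(m-1)/(n-1) < m/2$ and leaves a short, finite list of exceptional doubly transitive groups to verify directly.
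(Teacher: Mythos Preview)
The paper does not prove this theorem at all: it is quoted as a classical 19th-century result and attributed to Bochert~\cite{bochert} with the remark that it was obtained ``by a lovely combinatorial argument.'' There is therefore no proof in the paper to compare your attempt against.

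On the merits of your sketch: the commutator strategy is indeed the classical route, but what you have written is a plan with real gaps, not a proof. Two specific problems. First, from $\supp(\rho)\subseteq S\cup h(S)$ alone you only get $|\supp(\rho)|\le 2m$, which is useless; to conclude $|\supp(\rho)|<m$ when the overlap $k=|S\cap h(S)|$ is small you need the sharper estimate $|\supp([\sigma,\tau])|\le 3k$, obtained by noting that a point $x\in S\setminus h(S)$ can be moved by $\rho$ only if $\sigma(x)\in S\cap h(S)$ (and symmetrically for $h(S)\setminus S$). You should make this explicit. Second, and more seriously, your fallback is a non sequitur: from ``$\sigma$ commutes with every conjugate $h\sigma h^{-1}$ whose support has \emph{small} overlap with $S$'' you cannot deduce that $\langle\sigma^G\rangle$ is abelian, because the conjugates with \emph{large} overlap are unconstrained and those are precisely the ones that matter. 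The genuine Bochert argument does not detour through an abelian normal subgroup; it iterates the conjugation/commutator step, exploiting $2$-transitivity to control exactly which points land in the overlap, and drives the support down directly until a $3$-cycle (or $2$-cycle) appears and Jordan's theorem applies. You yourself flag that ``the hard part will be the sharp constant $n/4$'' and then offer only a gesture toward ``an orbit count on ordered triples''; that is where the actual content of Bochert's proof lives, and you have not supplied it. (Minor point: your averaging formula $m(m-1)/(n-1)$ is not the expected overlap over all of $G$; that is $m^2/n$. This does not affect the inequality you need, but it suggests the computation was not carried out.) For the precise combinatorics, see Dixon--Mortimer, Section~3.3, or Wielandt's book.
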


\begin{remark}
Cameron's results~\cite{cameron81} classify all primitive permutation
groups of order greater than $n^{\log\log n}$ and naturally include
Theorem~\ref{thm:sunwilmes}.  The point here is that the proof by
Sun and Wilmes is elementary: it does not use the CFSG; in fact, it
uses no group theory at all.
\end{remark}

\subsection{Combinatorial relaxation of symmetry: Coherent configurations}
\label{sec:PCC}
The significance of the work of Sun and Wilmes goes far beyond
giving elementary proofs of group theoretic results previously
only known through the CFSG.  Their result is purely combinatorial;
it concerns primitive coherent configurations (PCCs): highly regular
colorings of the directed complete graph, with no symmetry assumptions.
We define this very general class of objects now.

\begin{definition}
A \emph{coherent configuration} (CC) is a pair $\mfX=(\Omega,c)$
where $\Omega$ is a set (the set of \emph{vertices}) and
$c:\Omega\times\Omega\to\Sigma$ is a coloring of the
ordered pairs of vertices ($\Sigma$ is the set of colors),
subject to the following regularity constraints.
We assume $c$ is surjective.  Below, $x,y,u,v\in\Omega$.
\begin{itemize}
\item[(i)] $(\forall x,y,z)(\text{if }c(x,x)=c(y,z)\text{ then }y=z)$.
\item[(ii)] $(\forall x,y,u,v)(\text{ if }c(x,y)=c(u,v)\text{ then }
        c(y,x)=c(v,u))$.
\item[(iii)] There exists a family of $|\Sigma|^3$ non-negative
integers $p_{i,j}^k$, called the \emph{intersection numbers},
such that
$$(\forall x,y)(\text{if }c(x,y)=k\text{ then }
    |\{z\ :\ c(x,z)=i,\ c(y,z)=j\}|=p_{i,j}^k)\,.$$
\end{itemize}
The number of colors used is called the \emph{rank} of $\mfX$.
\end{definition}

Let $G\le \sym(\Omega)$ be a permutation group.  Let $E_1,\dots,E_r$
denote the \emph{orbitals} of $G$, \ie, the orbits of the $G$-action on
$\Omega\times \Omega$.  Assigning color $i$ to the elements of $E_i$
we obtain a coloring $c:\Omega\times\Omega\to [r]$.  It is easy to see
that the resulting pair $\mfX(G):=(\Omega,c)$ is a CC,
and $G\le\aut(\mfX(G))$.  A CC arising in this manner is called
a \emph{Schurian CC}, after Issai Schur who first introduced CCs
in 1933, as a tool 
in the study of permutation groups~\cite{schur}.  CCs were
subsequently rediscovered several times in different contexts.
They include such much-studied structures as strongly regular
graphs, distance-regular graphs, association schemes.
If $X=(V,E)$ is a graph then the Weisfeiler--Leman color refinement
process~\cite{weisfeiler-leman, weisfeiler-book} (see, \eg,~\cite{quasipoly})
efficiently constructs a CC $\mfX(X)=(V,c)$ such that
$\aut(X)=\aut(\mfX(X))$.  In particular, $X$ has an asymmetric
$d$-coloring (of the vertices, as always in this paper) if and only if
$\mfX(X)$ has an asymmetric $d$-coloring.  CCs are also
critical ingredients in the recent isomorphism test~\cite{quasipoly}.
That paper includes a detailed introduction to the combinatorial
theory of CCs.

\begin{definition}[Constituents, PCC]
Given a CC $\mfX=(\Omega,c)$,
the digraphs $R_i=(\Omega,c^{-1}(i))$ $(i\in\Sigma)$ are the
\emph{constituent digraphs} of $\mfX$.  If one of these
is the diagonal $\diag(\Omega):=\{(x,x)\mid x\in\Omega\}$,
we call $\mfX$ \emph{homogeneous}.  Observe that a group
$G$ is transitive if and only if the corresponding Schurian
CC $\mfX(G)$ is homogeneous.  We call $\mfX=(\Omega,c)$
a \emph{primitive CC (PCC)} if it is homogeneous and
every non-diagonal constituent is a (strongly) connected
digraph.
\end{definition}

It is not difficult to show that a permutation
group $G$ is primitive if and only if $\mfX(G)$ is a PCC.
We should emphasize that conjecturally and empirically,
most CCs are not Schurian.

Note that for every $n$ there is essentially only one
rank-2 CC, namely, $\mfX(S_n)$, to which we refer as the
$n$-clique, and also as the trivial CC.

\begin{definition}[UPCC]
A \emph{uniprimitive coherent configuration (UPCC)}
is a PCC of rank $\ge 3$ (a nontrivial PCC).
\end{definition}

Now we can state the actual result of Sun and Wilmes.

\begin{theorem}[Sun--Wilmes]   \label{thm:sunwilmes2}
Let $\mfX$ be a UPCC with $n$ vertices.  If $n$ is sufficiently
large then $|\aut(\mfX)|\le \exp(O(n^{1/3}(\log n)^{7/3}))$,
unless $\mfX$ is the CC corresponding to a triangular graph
or a lattice graph.  
\end{theorem}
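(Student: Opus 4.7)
The plan is to bound $|\aut(\mfX)|$ via a base-size argument: if there is a set $B\subseteq\Omega$ of size $b$ whose pointwise stabilizer in $\aut(\mfX)$ is trivial, then $|\aut(\mfX)|\le n^b$, so it suffices to construct a base of size $O(n^{1/3}(\log n)^{4/3})$ for every UPCC $\mfX$ other than those associated to $T(r)$ or $L_2(r)$. The base would be built via the standard individualize-and-refine procedure: starting from the canonical partition of $\Omega$ induced by the colors $c(v,\cdot)$ relative to a fixed vertex $v$, iteratively individualize further vertices, refining by naturality under $\aut(\mfX)$, until the partition becomes discrete.

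First, I would invoke the symmetry-breaking technology underlying Theorem~\ref{thm:annals}: in any non-trivial constituent $R_i$ of a PCC, there are stringent bounds on how many vertices can fail to distinguish a given pair. Concretely, whenever a non-diagonal constituent has valency $k_i$ lying in the \emph{intermediate regime} (roughly $n^{1/3}(\log n)^{C}\le k_i\le n-n^{1/3}(\log n)^{C}$), individualizing a vertex splits $\Omega$ via $R_i$ into cells whose sizes are bounded away from $n$ by a factor depending on $k_i$. Iterating this splitting contracts the cell sizes geometrically, and a direct accounting shows that $O(n^{1/3}(\log n)^{4/3})$ rounds of individualization suffice, \emph{provided} the refinement never stalls on a large cell.

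The hard case — and the source of the triangular and lattice exceptions — is when no constituent lies in the intermediate regime, equivalently when every non-diagonal constituent has valency either very small or very close to $n$. Here the refinement process can stall, and one must extract structure from the intersection numbers $p_{ij}^k$ directly. The key step is a clique-geometry analysis: the small-valency constituents, together with their triangle counts, either reveal a system of cliques large enough to serve as a distinguishing gadget (yielding the base bound), or force the configuration to satisfy the axioms of a Johnson-like or Hamming-like scheme of rank $3$, at which point the primitivity plus uniformity constraints pin $\mfX$ down to $T(r)$ or $L_2(r)$.

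The principal obstacle will be executing that final step: isolating precisely the combinatorial regime in which the clique geometry is rigid enough that individualization stops making progress, yet flexible enough to admit configurations other than the two exceptional families. Carrying this out requires a careful triangle and $p_{ij}^k$-based structure theory of UPCCs, culminating in a Bose–Laskar-style clique-extension argument showing that every near-extremal UPCC is already one of the two prescribed line-graph configurations. Once this structural dichotomy is in place, the base-size bound — and hence the automorphism bound $\exp(O(n^{1/3}(\log n)^{7/3}))$ — follows by combining the three regimes.
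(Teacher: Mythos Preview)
The paper does not prove this theorem; it is quoted verbatim as the main result of Sun and Wilmes~\cite{sunwilmes} and used as a black box (to derive Theorem~\ref{thm:sunwilmes} and Theorem~\ref{thm:uni-asymm}). So there is no ``paper's own proof'' to compare against.

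Your sketch is a reasonable high-level description of the actual Sun--Wilmes strategy: bound $|\aut(\mfX)|$ by $n^b$ via a base of size $b=\widetilde O(n^{1/3})$, build the base by individualize-and-refine, and handle the stalled case through a clique-geometry analysis that isolates the Johnson/Hamming exceptions. That said, as written it is a plan, not a proof: the ``intermediate regime'' splitting you describe does not by itself give geometric contraction of cell sizes (the actual argument needs the full Weisfeiler--Leman refinement and delicate counting of how intersection numbers propagate across cells), and the ``Bose--Laskar-style clique-extension'' step you allude to is precisely the long technical heart of~\cite{sunwilmes}, occupying most of that paper. You correctly identify this as the principal obstacle, but nothing in your outline indicates how to overcome it; for the purposes of the present paper you should simply cite the result.
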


Finally, we are in the position to address Question~($\beta$)
above, by generalizing Theorem~\ref{thm:saxl} to UPCCs.

\begin{theorem}   \label{thm:uni-asymm}
All sufficiently large UPCCs admit an asymmetric 2-coloring.
\end{theorem}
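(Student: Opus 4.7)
The plan is to combine the Sun--Wilmes order bound (Theorem~\ref{thm:sunwilmes2}) with the motion lower bound for UPCCs (Obs.~\ref{obs:uni-motion}) and feed the resulting pair of estimates into the Gluck--Cameron--Neumann--Saxl counting argument (Prop.~\ref{prop:motionlemma}). Recall the shape of that argument: for $G\le\sym(\Omega)$, every non-identity $\sigma\in G$ stabilizes a subset $\Delta\subseteq\Omega$ if and only if $\Delta$ is a union of cycles of $\sigma$, so the number of $\sigma$-invariant subsets is at most $2^{n-\mu(G)/2}$; consequently if $|G|<2^{\mu(G)/2}$, then a union bound over $G\setminus\{1\}$ leaves at least $2^n-|G|\cdot 2^{n-\mu(G)/2}>0$ asymmetric subsets. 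So the goal is to verify $|\aut(\mfX)|<2^{\mu(\mfX)/2}$ for all sufficiently large UPCCs.

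Let $\mfX$ be a UPCC on $n$ vertices with $n$ large. Suppose first that $\mfX$ is not the CC of a triangular graph $T(r)$ or a lattice graph $L_2(r)$. By Theorem~\ref{thm:sunwilmes2},
\[
\log_2|\aut(\mfX)|=O\!\bigl(n^{1/3}(\log n)^{7/3}\bigr),
\]
while by Obs.~\ref{obs:uni-motion} the motion satisfies $\mu(\mfX)=\Omega(\sqrt{n})$ (indeed, the tight-up-to-constant motion bound for UPCCs is attained precisely by $T(r)$ and $L_2(r)$). Since $\sqrt{n}$ dominates $n^{1/3}(\log n)^{7/3}$, we have $2\log_2|\aut(\mfX)|<\mu(\mfX)$ for all large $n$, and the counting criterion delivers an asymmetric 2-coloring.

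It remains to handle the two exceptional families, where the counting argument falls short by roughly a logarithmic factor and a direct construction is required. For $\mfX=\mfX(T(r))$, identify $\Omega$ with $\binom{[r]}{2}$; since $\aut(T(r))=S_r$ for $r\ge 5$, a 2-coloring of $\Omega$ is precisely (the edge set of) a graph $Y$ on $[r]$, and its stabilizer in $\aut(T(r))$ is $\aut(Y)$. So an asymmetric 2-coloring exists iff there is an asymmetric graph on $r$ vertices---a classical fact for all sufficiently large $r$ (in fact, almost all such graphs are asymmetric). For $\mfX=\mfX(L_2(r))$, identify $\Omega$ with $[r]\times[r]$; the automorphism group is $(S_r\times S_r)\rtimes C_2$ where $C_2$ swaps coordinates, so a 2-coloring is a 0/1-matrix and asymmetry amounts to its rigidity under independent row/column permutations and transposition. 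A straightforward random or explicit construction yields such a rigid matrix for all large $r$.

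Conceptually there is no real obstacle: the deep input is already available (Sun--Wilmes for the order bound, the 1979 PCC motion bound, and the counting lemma). The one piece of care needed is verifying that the order and motion estimates leave the counting inequality comfortably strict for non-exceptional UPCCs, and then supplying the short combinatorial constructions for $T(r)$ and $L_2(r)$ where the counting margin disappears. The main ``hard'' step---the quasi-$n^{1/3}$ order bound for non-exceptional UPCCs---is precisely what Sun--Wilmes contribute and is the reason this corollary was not accessible before their work.
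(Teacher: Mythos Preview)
Your argument follows the paper's approach exactly: combine the Sun--Wilmes order bound (Theorem~\ref{thm:sunwilmes2}), the motion lower bound $\mu \ge (\sqrt{n}-1)/2$ from Theorem~\ref{thm:annals} via Obs.~\ref{obs:uni-motion}, and feed both into the Motion Lemma (Prop.~\ref{prop:motionlemma}). In fact you are more careful than the paper on one point: Theorem~\ref{thm:sunwilmes2} explicitly excludes the triangular and lattice graphs, and the paper's short proof invokes the Sun--Wilmes bound without separately addressing these two families, whereas you correctly observe that the counting margin vanishes there and supply direct constructions (asymmetric graphs on $[r]$ for $T(r)$; rigid $0/1$-matrices under row/column permutations and transpose for $L_2(r)$). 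Those constructions are standard and valid, so your write-up actually closes a small gap left implicit in the paper's proof.
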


Like much of the literature about asymmetric colorings, 
we shall rely on the following lemma,
implicit in the counting argument used by
Gluck~\cite{gluck} and Cameron et al.~\cite{saxl} and
made explicit by Russell and Sundaram a decade and a
half later~\cite{russell}.

\begin{prop}[Motion Lemma, \cite{gluck}, \cite{saxl}, \cite{russell}]
\label{prop:motionlemma}
Let $G\le S_n$ be a permutation group of minimal degree $\mu$.
If $d^{\mu/2} \ge |G|$ then $G$ admits an asymmetric $d$-coloring.
\hfill $\Box$
\end{prop}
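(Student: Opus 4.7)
The plan is a short union-bound (pigeonhole) argument, essentially the counting idea of Gluck and Cameron--Neumann--Saxl later isolated by Russell--Sundaram. The idea is to compare the total number $d^n$ of $d$-colorings of $[n]$ with an upper bound on the number of $d$-colorings fixed by \emph{some} non-identity element of $G$, and to conclude by strict inequality that at least one $d$-coloring is $G$-asymmetric.

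First I would record the key identity: a $d$-coloring $\gamma:[n]\to[d]$ is preserved by $\sigma\in S_n$ if and only if $\gamma$ is constant on every cycle of $\sigma$. Consequently the number of $d$-colorings preserved by $\sigma$ is exactly $d^{c(\sigma)}$, where $c(\sigma)$ denotes the total number of cycles of $\sigma$ in its cycle decomposition on $[n]$ (fixed points counted as cycles of length one).

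The second step is the crucial bound: for every $\sigma\in G\setminus\{1\}$ we have $c(\sigma)\le n-\mu/2$. Indeed, writing $s=|\supp(\sigma)|\ge \mu$, the element $\sigma$ produces $n-s$ fixed points and partitions $\supp(\sigma)$ into cycles of length $\ge 2$, of which there are at most $s/2$; thus $c(\sigma)\le (n-s)+s/2=n-s/2\le n-\mu/2$. This is the single place where the minimal-degree hypothesis enters.

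Finally I would apply the union bound: the number of $d$-colorings that are \emph{not} $G$-asymmetric is at most
\[
  \sum_{\sigma\in G\setminus\{1\}} d^{c(\sigma)}\ \le\ (|G|-1)\,d^{n-\mu/2}.
\]
Under the hypothesis $d^{\mu/2}\ge|G|$ the right-hand side is strictly smaller than $d^{\mu/2}\cdot d^{n-\mu/2}=d^n$, so some $d$-coloring of $[n]$ is fixed by no non-identity element of $G$, i.e., is $G$-asymmetric. There is no real obstacle here; the only substantive content is the elementary cycle-count estimate in the second step, which is the bridge from the algebraic hypothesis on the minimal degree to a usable bound on the number of colorings fixed by each element.
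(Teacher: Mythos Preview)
Your argument is correct and is exactly the standard counting proof attributed in the paper to Gluck, Cameron--Neumann--Saxl, and Russell--Sundaram; the paper itself states the lemma without proof (the $\Box$ after the statement), so there is no alternative approach to compare against.
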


In order to take advantage of this lemma, we need
and upper bound on the order of the automorphism
group of $\mfX$, and a lower bound on the motion of $\mfX$.

The former is provided by the Sun--Wilmes result,
Theorem~\ref{thm:sunwilmes2}.

We take the latter from 
a 1981 paper of this author~\cite{annals}.
Since the result is not explicitly stated in~\cite{annals},
let me show how it follows immediately from the
main technical result of that paper.  (This fact
has been known since immediately after the
publication of~\cite{annals}.)

Let $\mfX=(\Omega,c)$ be a UPCC.
Following~\cite{annals}, we say that vertex $z$
\emph{distinguishes} vertices $x$ and $y$ if $c(z,x)\neq c(z,y)$.
Let $D(x,y)$ denote the set of vertices $z$ that distinguish
$x$ and $y$.  The core technical result of~\cite{annals} is the
following.

\begin{theorem}[\cite{annals}]   \label{thm:annals}
Let $\mfX$ be a UPCC with $n$ vertices.  Then, for every pair
$x,y$ of distinct vertices, $|D(x,y)|\ge (\sqrt{n}-1)/2$.
\end{theorem}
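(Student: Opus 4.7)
The plan is to translate the statement into a condition on intersection numbers and then force a contradiction with primitivity via a closure argument. Fix distinct $x,y$ with $c(x,y)=s$ (a non-diagonal color), and set $N_i(v):=\{z\in\Omega : c(v,z)=i\}$. Since axiom~(ii) makes $c(z,x)\ne c(z,y)$ equivalent to $c(x,z)\ne c(y,z)$, the non-distinguishing set partitions as
\[
    \Omega\setminus D(x,y)\;=\;\bigsqcup_{i}\bigl(N_i(x)\cap N_i(y)\bigr),
\]
and axiom~(iii) identifies $|N_i(x)\cap N_i(y)|$ with the intersection number $p_{i,i}^{s}$. So the goal is equivalent to showing $\sum_i p_{i,i}^{s}\le n-(\sqrt{n}-1)/2$, a purely combinatorial statement about the parameters of $\mfX$.

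Next, I would suppose for contradiction that $|D(x,y)|<(\sqrt{n}-1)/2$ and define a reflexive, symmetric ``similarity'' relation $\approx$ on $\Omega$ by declaring $u\approx v$ iff $|D(u,v)|<(\sqrt{n}-1)/2$. The core of the argument is to show that at this critical threshold the relation $\approx$ is compatible with the coherent configuration. Using the regularity identities $\sum_j p_{i,j}^{s}=n_i$ together with the hypothesis on $|D(x,y)|$, one derives that for every color $j$ the neighborhoods $N_j(x)$ and $N_j(y)$ coincide off a set of size at most $|D(x,y)|$. Propagating this observation through the constituents yields that the transitive closure $E$ of $\approx$ is a nonempty, non-diagonal partition of $\Omega$ whose blocks are respected by every constituent $R_j$ of $\mfX$.

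The final step invokes primitivity: since every non-diagonal constituent of a PCC is strongly connected, the only CC-compatible partitions of $\Omega$ are the diagonal and the full relation. Because $x\ne y$ and $x\approx y$ rule out the diagonal, $E$ must be the full relation; but then every pair of vertices is similar, which, when substituted back into the intersection-number identities, collapses the rank of $\mfX$ to~$2$, contradicting uniprimitivity.

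I expect the main obstacle to be making the propagation step quantitative. Concretely, one must show that the threshold $(\sqrt{n}-1)/2$ is exactly the value at which the ``small-$D$'' relation closes up under the combinatorial structure of the configuration. This boils down to an expansion-type estimate balancing the total valency $\sum_i n_i=n-1$ against the number of vertices that can fail to distinguish $x$ from $y$ before the induced equivalence must swallow all of $\Omega$; the square-root scaling is exactly the self-consistent point of this balance, and verifying it directly from axioms~(i)--(iii) is the technical crux.
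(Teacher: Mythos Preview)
First, note that the paper does not supply a proof of this theorem: it is quoted from~\cite{annals} as an external result. So there is no ``paper's proof'' to compare against here; I can only assess your argument on its own merits.

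Your proposal has a genuine logical gap at the final step. You correctly observe that $\approx$ is a union of color classes (since $|D(u,v)|$ depends only on $c(u,v)$), and that in a PCC the transitive closure $E$ of any such relation is either the diagonal or all of $\Omega\times\Omega$. But from ``$E$ is the full relation'' you conclude ``every pair of vertices is similar,'' and that is a non sequitur. The transitive closure being full only says that any two vertices are joined by a \emph{chain} of $\approx$-steps; it does not say they are $\approx$-related in one step. In fact, by primitivity, the transitive closure of \emph{any} non-diagonal union of constituents is already all of $\Omega\times\Omega$, so this step extracts no information whatsoever from the threshold $(\sqrt{n}-1)/2$.

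To rescue the approach you would need to bound the \emph{length} of such chains (a diameter bound) and combine it with the triangle inequality $D(u,w)\subseteq D(u,v)\cup D(v,w)$; but a diameter bound is precisely what you do not have, and obtaining one is essentially equivalent in difficulty to the theorem itself. You acknowledge that the ``propagation step'' is the technical crux and leave it unresolved; in fact it is not merely unresolved but, as formulated, circular. The actual argument in~\cite{annals} proceeds differently: it singles out a non-diagonal constituent of minimum valency and runs a direct counting argument relating that valency, the distinguishing number, and $n$, rather than attempting a closure/primitivity contradiction of the type you sketch.
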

All we need to add to this result is the following
observation.

\begin{obs}   \label{obs:uni-motion}
Let $\mfX$ be a UPCC with $n$ vertices.  Then the motion
of $\mfX$ is $\ge \min_{x\neq y} |D(x,y)|$.
\end{obs}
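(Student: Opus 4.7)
The plan is to show directly that every non-identity $\sigma\in\aut(\mfX)$ moves at least $|D(x,y)|$ points, where $y=\sigma(x)$ and $x$ is any vertex not fixed by $\sigma$. This immediately yields the bound on the motion by taking the minimum over $x\neq y$.

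First I would pick any $\sigma\in\aut(\mfX)\setminus\{1\}$ and choose $x\in\Omega$ with $y:=\sigma(x)\neq x$; such an $x$ exists because $\sigma$ is not the identity. Then I would show that $D(x,y)\subseteq\supp(\sigma)$. Take any $z\in D(x,y)$, so $c(z,x)\neq c(z,y)$. Since $\sigma$ preserves the coloring $c$ of ordered pairs, we have $c(\sigma(z),\sigma(x))=c(z,x)$, i.e., $c(\sigma(z),y)=c(z,x)$. If we had $\sigma(z)=z$, this would give $c(z,y)=c(z,x)$, contradicting $z\in D(x,y)$. Hence $\sigma(z)\neq z$, i.e., $z\in\supp(\sigma)$.

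This proves $|\supp(\sigma)|\ge|D(x,y)|\ge\min_{u\neq v}|D(u,v)|$, and taking the minimum over all non-identity $\sigma$ yields $\mu(\aut(\mfX))\ge\min_{u\neq v}|D(u,v)|$, which is the claimed inequality.

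There is essentially no obstacle here: the argument is a one-line application of the definition of automorphism of a CC (a permutation of $\Omega$ preserving the coloring $c$ of $\Omega\times\Omega$). The only thing worth being slightly careful about is making sure the chosen pair $(x,y)$ genuinely satisfies $x\neq y$ so that $D(x,y)$ is defined; this is ensured by choosing $x$ from $\supp(\sigma)$ and setting $y=\sigma(x)$. Combined with Theorem~\ref{thm:annals}, this observation yields motion $\ge(\sqrt{n}-1)/2$ for every UPCC on $n$ vertices, which is exactly the lower bound needed to feed into the Motion Lemma (Prop.~\ref{prop:motionlemma}) together with the Sun--Wilmes upper bound (Theorem~\ref{thm:sunwilmes2}) to derive Theorem~\ref{thm:uni-asymm}.
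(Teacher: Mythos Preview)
Your proof is correct and essentially identical to the paper's: both pick $x\in\supp(\sigma)$, set $y=\sigma(x)$, and verify $D(x,y)\subseteq\supp(\sigma)$ via the same computation $c(\sigma(z),y)=c(z,x)$. The only cosmetic difference is that the paper phrases the inclusion via the contrapositive (if $z\notin\supp(\sigma)$ then $c(z,x)=c(z,y)$) and fixes $\sigma$ to realize the motion up front rather than minimizing at the end.
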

\begin{proof}
Let $T=\supp(\sigma)$ for some $\sigma\in\aut(X)$
such that the size of $T$ is the motion of $\mfX$.
Let $x\in T$ and $y=\sigma(x)$.  Then $y\neq x$ by
definition.  We claim that $T\supseteq D(x,y)$.
Indeed, if $z\in\Omega\setminus T$ then
$c(z,x)=c(\sigma(z),\sigma(x))=c(z,y)$, so $z\not\in D(x,y)$.
\end{proof}

\begin{proof}[Proof of Theorem~\ref{thm:uni-asymm}]
Let $\mu=\mu(\mfX)$ be the motion of $\mfX$.
We have $\mu \ge (\sqrt{n}-1)/4$ by Theorem~\ref{thm:annals} 
and Obs.~\ref{obs:uni-motion}.  So
$2^{\mu/2} \ge 2^{(\sqrt{n}-1)}/4$.
For sufficiently large $n$, this quantity is greater than the
Sun--Wilmes bound (Theorem~\ref{thm:sunwilmes2}),
which is $\exp(C(n^{1/3}(\log n)^{7/3}))$ for some constant $C$.  
\end{proof}

\section{Open problems}
\label{sec:open}
Theorem~\ref{thm:finite} describes a finite version of our main
technical result, Theorem~\ref{thm:main1}.  I would be most
interested in more effective versions of this result, and
specifically in Conjectures~\ref{conj:finite1} and~\ref{conj:finite2}
(Polylog bound conjecture).

Below I list a number of additional problems and directions
of study.
All groups in this section, except in Problems (1), (2), and (11),
are finite.

\mn
{\bf Terminology.}\quad  Given a permutation group $G\le\sym(\Omega)$,
recall that we say that a coloring $\gamma$ ``results in a 2-group''
if $\gamma$ is a coloring of the permutation domain $\Omega$
and $G_\gamma$ (the color-preserving subgroup of $G$)
is a 2-group.  And we can substitute any class of groups
in such a statement for ``2-groups,'' so for instance
the statement that ``a coloring results in a group with 
derived length $\le 3$'' should have a clear meaning.
PS-closed classes of groups (classes closed under direct
products and subgroups), such as those mentioned above,
are of particular interest because of their monotonicity
properties described in Obs.~\ref{obs:ps-closed} and
Remark~\ref{rem:ps-closed}.
\begin{enumerate}
\item    %% [1.] 
\begin{itemize}
\item[(a)]
  Give a CFSG-free proof of Theorem~\ref{thm:main1}
  and thereby to the Infinite Motion Conjecture.\\
  I expect that the results mentioned in the preceding section,
  and in particular the Sun--Wilmes Theorem
  (Theorem~\ref{thm:sunwilmes}), will play a role.

\item[(b)] How much of Theorem~\ref{thm:simple}
  (``Reducing simple image'') can be salvaged without CFSG?
  \end{itemize}
    
\item  %% [2.]
\begin{itemize}  
   \item[(a)]   
  Does there exist a constant $C$ such that the following
  strengthening of the Infinite Motion Conjecture holds?\\
  \emph{Let $X$ be a connected locally finite rooted graph with
  infinite motion.  Then $X$ has an asymmetric 2-coloring (red/blue)
  that is overwhelmingly blue in the sense that
  every sphere about the root gets at most $C$ red vertices.}\\
  This question is motivated by the consideration of the ``cost''
  of coloring, as defined below.  It is easy to see
  that the statement is true for locally finite rooted trees 
  without vertices of degree~1.

   \item[(b)] 
  Does there exist a constant $C$ such that the following
  strengthening of Theorem~\ref{thm:simple}
  (``Reducing simple image'') holds?\\
  \emph{Let $G\le \sym(\Omega)$, where $\Omega$ is a finite set.
    Let $\vf: G\onto T$
    be an epimorphism where $T$ is a nonabelian simple group.  Then 
    there exists a subset $\Delta \subseteq\Omega$ of size
    $|\Delta|\le C$ such that $\vf(G_\Delta) < T$.}\\
  We note that $C=1$ will not suffice, as the example
  $G=\zzz_p^d \semidirect T\le \agl(d,p)$ shows, where the
  semidirect product is defined by a nontrivial $d$-dimensional
  irreducible representation of $T$ over $\fff_p$.
  \end{itemize}  

\item   %% [3.]
  Recall Theorem~\ref{thm:nonsolvable} (reducing non-solvable image):
\vspace{0.1cm}
%%  \smallskip

  \noindent
  \emph{Let $G\le \sym(\Omega)$ where $\Omega$ is a finite set.
  Let $H$ be a group and $\vf: G\onto H$ an epimorphism.
  Then $(\exists \Delta \subseteq\Omega)(\vf(G_\Delta) < H)$,
  assuming $H$ is not solvable.}

\vspace{0.1cm}
  \begin{itemize}    
  \item[(a)]  We note that the condition that ``$H$ is not solvable''
  cannot be replaced by the condition ``$|H| > 1$,'' as shown
  by the example $G=D_k$ (the dihedral group of order $2k$ acting
  naturally on $k$ elements) and $H=\zzz_2$ where the epimorphism
  $\vf$ is defined by the natural epimorphism $D_k \onto D_k/\zzz_k$,
  where $3\le k\le 5$.
  \item [(b)]  Question.  Does there exist a number $C$ such that
  the following holds?\\
  \emph{Let $G\le \sym(\Omega)$ where $\Omega$ is a finite set.
  Let $H$ be a group.    Let $\vf: G\onto H$ be an epimorphism.
  Assume $|H|\ge C$.  
  Then $(\exists \Delta \subseteq\Omega)(\vf(G_\Delta) < H)$.}
\item[(c)]  Does the conclusion of (3)(b) follow  if we only require
  $|G| \ge C$ and $|H|\ge 2$ ?
  \end{itemize}  

\item    %%[4.] 
  \begin{itemize}  
  \item[(a)]
Given a sequence $n_0,\dots,n_k$ of positive integers,
consider the \emph{balanced rooted tree} of height $k$
where the vertices at distance $j$ from the root
have $n_j$ children.  So the automorphism group is
the wreath product of the symmetric groups of degree $n_j$.
What is the asymmetric coloring number of these trees?

  \item[(b)]  More generally,
how does the \emph{position of symmetric and
alternating groups in a structure tree} (hierarchy of
blocks of imprimitivity) of a transitive group
affect the asymmetric coloring number? 
  \end{itemize}

\item  %%[5.] 
A systematic study of \emph{solvable colorings}
for permutation groups would be of interest. Recall that
these are colorings of the permutation domain that 
result in a solvable group.
More specific questions on this subject follow.

  \begin{itemize}
  \item[(a)]
Within various classes of permutation groups, characterize those
that do not admit a solvable 2-coloring.\\
Among primitive groups, the only groups that do not admit
a solvable 2-coloring are the symmetric and alternating
groups of degree $\ge 9$ in their natural action.

  \item[(b)]
The wreath product $S_8\wr S_5$ does not admit a solvable
2-coloring.  Let us now consider the \emph{transitive permutation
groups without alternating composition factors}.
Can we characterize, which of these do not admit a solvable
2-coloring?

  \item[(c)]
The automorphism group of every tournament is solvable.
This statement is equivalent to the Feit--Thompson Theorem.
Can we prove without using heavy group theory that
\emph{all tournaments have a solvable $k$-coloring
for some fixed value $k$?}  Or is such a statement still
equivalent to Feit--Thompson?  
  \end{itemize}  

\item  %% [6.]
A general theme is, what kind of structural
reductions of the group can be achieved by a bounded number
of colors.  Here is a specific question of this type.

  \begin{itemize}  
  \item[(a)]
Does there exist a number $g_0$ such that the following
holds: \emph{Every permutation group admits a 2-coloring that 
kills all non-alternating composition factors of order 
$\ge g_0$}, \ie, after the coloring, all composition 
factors will either be alternating or of order $ < g_0$.
  \end{itemize}

\item  %% [7.]
Some questions of this type arose in this
paper when starting from a solvable group.

  \begin{itemize}  
  \item[(a)] What is the smallest $c$ such that 
\emph{every solvable permutation group admits a
2-coloring that results in derived length $\le c$ ?}
Such a $c$ exists by Theorem~\ref{thm:bded-orb}.

  \item[(b)] What is the smallest $C$ such that
\emph{every solvable permutation group admits a 
2-coloring that reduces the length of all orbits to $\le C$ ?}
Such a $C$ exists by Theorem~\ref{thm:bded-orb}.
The group $S_4\wr S_4$ shows that $C$ cannot be less than 4.

  \item[(c)] Does every solvable group have a 3-coloring
that results in a 2-group?\\
Two colors do not suffice for this, as the
example of $S_4\wr K$ shows for any solvable group $K$
that is not a 2-group.

  \item[(d)] Does every solvable permutation group have a 2-coloring  
that results in an \emph{abelian-by-2-group}, \ie, in
a group that has an abelian normal subgroup with the 
quotient being a 2-group?

  \item[(e)]
Sometimes instead of solvability of the automorphism group,
we can assume something about the underlying structure.
We discussed tournaments above.  Another example:
If $X$ is a connected cubic graph
then it has a \emph{low-cost} (see below) 2-coloring
that results in a 2-group: just color a pair of
adjacent vertices red, the rest blue.

For the same reason, if $X$ is a connected graph
such that every vertex has degree $\le k$ then
the same low-cost 2-coloring results in a group
with bounded composition factors.  (Every composition
factor is a subgroup of $S_{k-1}$.)  This fact
was used by Gene Luks to revolutionize the
theory of Graph Isomorphism testing in
1980~\cite{luks-bded}.
  \end{itemize}
  
\item  %% [8.]
An important direction of study is the extension of
known results about the minimal degree of primitive groups 
(often obtained via CFSG)
to the motion of \emph{strongly regular graphs,}
\emph{distance-regular graphs,} and \emph{primitive coherent
configurations (PCCs)}.  Some work in this direction has already been
done, see, \eg, \cite{annals, itcs14}, and the profound results
in \cite{sunwilmes, kivva21a, kivva21b, kivva-rank4}. A conjecture of this
author that motivates Kivva's work~\cite{kivva21a,kivva21b,kivva-rank4}
is the following.
\begin{conjecture}
Let $\mfX$ be a PCCs
with $n$ vertices.  If $\mfX$ is not a Cameron scheme
then the motion of $\mfX$ is $\ge cn$
for some positive constant $c$.
\end{conjecture}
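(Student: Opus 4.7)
The strategy is to reduce to the rank-$3$ (strongly regular) case and then leverage the recent structural results of Sun--Wilmes~\cite{sunwilmes} and Kivva~\cite{kivva21a,kivva21b,kivva-rank4}. First I would establish a rank-sensitive refinement of the motion bound: a counting/averaging argument sharpening Theorem~\ref{thm:annals} should show that in a PCC of rank $r$, every pair $x\neq y$ is distinguished by a constant fraction of the vertices unless the intersection numbers are very unbalanced. Combined with Obs.~\ref{obs:uni-motion}, this reduces the problem to ``near-rank-$3$'' PCCs dominated by a single constituent.

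For the strongly regular case, suppose for contradiction that $\sigma\in\aut(\mfX)$ satisfies $|\supp(\sigma)|<cn$, and let $F=\Omega\setminus\supp(\sigma)$, so $|F|>(1-c)n$. For $y\in\supp(\sigma)$ and $x\in F$, the identity $c(x,y)=c(x,\sigma(y))$ forces $x\notin D(y,\sigma(y))$, hence $|D(y,\sigma(y))|<cn$. I would then combine this with Kivva's eigenvalue analysis, which boosts the $\sqrt{n}$ bound of Theorem~\ref{thm:annals} to $\Omega(n)$ except when the spectrum of a constituent takes one of a short, explicit list of shapes. The goal is to show that these exceptional spectra correspond precisely to triangular and lattice graphs and their complements, i.e., to the strongly regular Cameron schemes.

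For PCCs of rank $r\ge 4$, the plan is induction via coarsening. Given $\mfX$, consider proper coarsenings $\mfX\succ\mfY$ obtained by merging colors. If some such $\mfY$ is itself a PCC and is not a Cameron scheme, then by induction $\mfY$ has motion $\ge c'n$, and since $\aut(\mfX)\le\aut(\mfY)$ the bound transfers to $\mfX$. The remaining case is that $\mfX$ is \emph{coarsening-rigid}: every proper PCC coarsening is a Cameron scheme. This is a strong structural condition that should force $\mfX$ itself into a short list of product-type Cameron schemes, verifiable by direct inspection of the intersection matrices.

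The hard part will be the coarsening-rigid case at small rank (rank $4$ and $5$), where genuinely new PCC families could in principle appear without being detected by the spectral invariants of their individual constituents; this is precisely the regime Kivva addresses in~\cite{kivva-rank4}, and extending his intersection-number bookkeeping is likely the central technical obstacle. A secondary difficulty is that the constant $c$ must be uniform in $r$: the inductive constant $c'$ cannot be allowed to degrade with each coarsening step, which probably forces one to abandon crude induction in favor of a single global argument at the level of the intersection algebra.
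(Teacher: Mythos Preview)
The statement you are attempting to prove is stated in the paper as a \emph{conjecture}, not a theorem; the paper offers no proof. It appears in Section~\ref{sec:open} (Open problems) as a conjecture of the author that motivates Kivva's work. The paper records only partial progress: the Schurian case (Liebeck--Saxl, with $c=1/3$), PCCs of rank $\le 4$ (Kivva~\cite{kivva-rank4}), and distance-regular graphs of bounded diameter (Kivva~\cite{kivva21a,kivva21b}). The general conjecture remains open, so there is no ``paper's own proof'' to compare against.

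As for your plan itself, the central inductive step is flawed. Your rank-$\ge 4$ argument hinges on the existence of proper PCC coarsenings $\mfX\succ\mfY$, but a PCC need not admit any nontrivial fusion that is again a coherent configuration, let alone a primitive one. The fusion (merging colors) of a CC is generally not a CC; the existence of proper fusion schemes is itself a delicate question, and ``coarsening-rigid'' in your sense may well be the generic behavior rather than an exceptional one forcing Cameron structure. Your first reduction is also underspecified: the assertion that a counting refinement of Theorem~\ref{thm:annals} yields motion $\Omega(n)$ unless the PCC is ``near-rank-$3$'' is essentially the content of the conjecture for large rank, not an easy preliminary. Finally, even in the strongly regular case your sketch defers the actual work to ``Kivva's eigenvalue analysis''; note that Kivva's results for rank $3$ are subsumed in his rank-$\le 4$ theorem, and extending the spectral bookkeeping beyond rank $4$ is exactly where the conjecture is open.
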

The conjecture is motivated by a
1991 result by Liebeck and Saxl~\cite{liebecksaxl-mindeg}
that says that the statement is true with $c=1/3$ 
in the Schurian case.

The Cameron schemes are Schurian.  They correspond to
primitive permutation groups $G$ acting on $n=\binom{m}{k}^r$
elements for some $m\ge 5$, $1\le k \le m-1$, and $r\ge 1$,
as a subgroup of $S_m\wr S_r$ containing $A_m^r$
where $S_m$ acts on the $k$-subsets of an $m$-set
and $S_r$ acts on the ordered $r$-tuples of such subsets.

In his monumental work,
Kivva \emph{confirmed the conjecture for PCCs of rank $\le 4$}
\cite{kivva-rank4} 
and for \emph{distance-regular graphs of bounded diameter}
\cite{kivva21a,kivva21b}.
Given the combinatorial nature of the question,
no group theory is involved in his proofs.

\item %% [9.] 
\emph{Combinatorial symmetry breaking is a key aspect
of the Graph Isomorphism problem} (see~\cite{quasipoly}).
From this perspective,
the cost of breaking the symmetry is not the number of
colors used but the \emph{entropy of the distribution
of colors:} if color $i$ occurs $k_i$ times on a set of size 
$n=\sum_{i=1}^s k_i$ then we are looking at the quantity
$H(k_1/n,\dots,k_s/n)=-\sum (k_i/n)\log_2 (k_i/n)$.

If the $s$ colors are uniformly distributed ($k_i=n/s$ for all $i$) then
the entropy is $\log_2 s$.  On the other hand, if one color dominates
and all the orther colors occupy just a small portion of the domain
then the entropy is close to zero.  For instance, in the case
of a 2-coloring, which is equivalent to fixing a subset, we wish
that subset to be as small as possible.  The size of that
set as a cost measure was introduced by Debra Boutin in
2008~\cite{boutin} (see also \cite{boutinimrich}) from
the philosophical consideration that,
given that in most cases of interest, an asymmetric 2-coloring
exists, a more refined measure of the cost of symmetry breaking is
needed.  This measure of cost represents a \emph{convergence with the
classical concept of minimum bases of permutation groups.}

A \emph{base} $\Delta\subseteq\Omega$ is a subset such that the
pointwise stabilizer is trivial: $G_{(\Delta)}=1$.
Bases have been introduced in computational group
theory (Sims~\cite{sims}) in the 1960s with the express purpose
of breaking all symmetry, but the concept also has great
theoretical significance (see below).

A base of size $b$ gives a coloring with $b+1$ colors:
individual colors for each element of the base, and a single color
for the rest.  Bases provide the \emph{lowest entropy} among all colorings
with $b+1$ colors.  But bases may not use the optimal number
of colors for the type of questions we are considering here; for 
instance, the base size for $S_8$ is 8, but
$S_8$ has a solvable 2-coloring.

In any case, the sizable literature about minimal bases of 
permutation groups will be particularly relevant in the
context of the refined cost measures.  Here is a selection
of relatively recent papers on minimum bases:
\cite{liebeck84, pyber,seress97,
gluckseress, liebeckshalev, burness11, burness17}.
One of the recent motivators of the area has been \emph{Pyber's
base size conjecture} (1993)~\cite[p. 207]{pyber}, resolved
in~\cite{duyhan} (2018) and made effective in \cite{halasi} (2019).
The latter paper also includes a nice overview of the subject.

\item %%[10.]
  \label{pr:cameron-schemes}
Symmetry-breaking by coloring of primitive coherent configurations
is at the heart of the study of the \emph{Graph Isomorphism}
problem~\cite{quasipoly}.  One of the types of problems that
arise there is to distinguish the Johnson, Hamming, and
Cameron schemes from all other primitive coherent configurations
by showing that for all other configurations, 
symmetries are destroyed at much lesser cost (see~\cite{quasipoly}).
Here the ``cost'' refers to the refined cost measures
explained in the previous item.

\item %%[11.]
  \label{finite-subdegree}
Let me highlight an interesting generalization of Tucker's
Conjecture, proposed by Imrich, Smith, Tucker, and Watkins,
that states that
\emph{a closed permutation group $G$, acting on a countably 
infinite set, with infinite minimal degree and finite subdegrees,
admits an asymmetric 2-coloring} (``Infinite Motion Conjecture
for Permutation Groups'') \cite[Sec 4]{watkins15}.  Here ``closed''
means a closed subgroup of the symmetric group in the permutation
topology, where a neighborhood basis of the identity consists of 
the pointwise stabilizers of finite subsets of the permutation domain.
The subdegrees are the lengths of the orbits of the stabilizer of a point.
\end{enumerate}

Let me conclude with a conjecture I have been entertaining for decades.
We have seen that epimorphisms to the alternating groups give
us a lot of trouble.  (Such epimorphisms have also defined
the bottleneck for Luks's graph isomorphism test that
caused three decades without progress on that problem,
see \cite{quasipoly}.)
I believe the conjecture below relates
to our subject, although I cannot draw a formal connection.

Answering this author's question, in 1983, 
Martin Liebeck~\cite{liebeck83} proved
that if $X$ is a graph and $\aut(X)\cong A_k$ then $n$ (the
number of vertices) must grow exponentially as a function
of $k$.  Specifically, he showed 
that $n\ge 2^k-k-2$ for all $k\ge 13$ and that
this lower bound is tight for $k\equiv 0$ or $1\pmod 4$.

\begin{conjecture}
There exists a constant $C > 1$ such that the following holds.
Let $X$ be a graph with $n$ vertices.  Assume $\aut(X)$ has
an epimorphism onto the alternating group $A_k$ $(k\ge 3)$.
Then $n \ge C^k$.
\end{conjecture}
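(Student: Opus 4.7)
I would argue by induction on $n$, proving the broader statement for all $2$-closed permutation groups $G\le\sym(\Omega)$ (which contains $\aut(X)$ for every graph $X$ and is preserved under the reductions below). The base case is Liebeck's theorem~\cite{liebeck83}: $\aut(X)\cong A_k$ forces $n\ge 2^k-k-2$. We may assume $k\ge 5$, so that $A_k$ is nonabelian simple.

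First reduce to the transitive case. If $G$ is intransitive with orbits $\Omega_1,\dots,\Omega_r$, then $G$ is a subdirect product of its restrictions $G|_{\Omega_i}$, and the Subdirect Product Lemma (Lemma~\ref{lem:subdirect}) supplies an index $i$ such that $G|_{\Omega_i}\onto A_k$. This restriction remains $2$-closed on a strictly smaller set, so induction closes the case. Assume now $G$ is transitive, and set $N:=\ker\vf$ where $\vf\colon G\onto A_k$. The $N$-orbits form a block system $\mathcal{B}$ on which $G/N\cong A_k$ acts, and by simplicity of $A_k$ this induced action is either faithful or trivial.

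In the faithful case, $A_k$ embeds into the automorphism group of the $2$-closed structure induced by $X$ on $\mathcal{B}$, i.e., an $A_k$-invariant coherent configuration. Here I would invoke a coherent-configuration analogue of Liebeck's theorem: \emph{if $A_k$ acts faithfully and preserves a nontrivial coherent configuration on a set of size $m$, then $m\ge 2^{ck}$ for some absolute $c>0$}. In the trivial case, $N$ is transitive on $\Omega$, so for any $x\in\Omega$ we have $G=G_xN$ and hence $G_x/(G_x\cap N)\cong A_k$: the stabilizer $G_x$ itself admits $A_k$ as a quotient and acts faithfully, $2$-closedly on $\Omega\setminus\{x\}$. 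A naive pass through point stabilizers yields only linear descent; one must instead exploit the size of the $N$-orbits and iterate through a chain of normal subgroups to compound the descent multiplicatively.

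The principal obstacle is the coherent-configuration analogue of Liebeck's theorem. Liebeck's original proof enumerates the faithful transitive actions of $A_k$ using CFSG and then combinatorially rules out small graph invariants; to generalize from a single graph to an arbitrary $A_k$-invariant $2$-closed structure, one must classify all $A_k$-invariant edge-colorings of the standard low-degree $A_k$-sets (natural, action on $\binom{[k]}{j}$, product and diagonal actions) and lower-bound the size in each case. The Sun--Wilmes result (Theorem~\ref{thm:sunwilmes2}) provides an exponential lower bound on the degree of uniprimitive coherent configurations with many automorphisms and handles most cases, but the primitive actions of $A_k$ of polynomial degree (chiefly on $[k]$ and $\binom{[k]}{2}$) will require the bespoke, CFSG-dependent analysis Liebeck developed. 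A secondary difficulty lies in the trivial-action case, where converting linear into exponential descent demands a subtler bookkeeping --- tracking, say, the $N$-orbit sizes or a ``residual'' $A_k$-content through the chain of stabilizers --- than the present sketch provides.
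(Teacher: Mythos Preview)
The statement is presented in the paper as an open \emph{Conjecture} --- the author says he has ``been entertaining [it] for decades'' --- and no proof is offered. Any complete argument would resolve an open problem; there is nothing in the paper to compare your attempt against.

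Your outline correctly isolates the obstruction but does not overcome it. In the faithful case, where $A_k$ acts faithfully on the system $\calB$ of $N$-orbits, the ``coherent-configuration analogue of Liebeck's theorem'' you invoke is false as stated: $A_k$ preserves the Johnson scheme on $\binom{[k]}{2}$, a nontrivial primitive coherent configuration of size only $\binom{k}{2}$. You note this, but the fallback to ``bespoke analysis'' does not help either: if the action on $\calB$ is the natural one on $k$ blocks, you would need the block size to exceed $C^k/k$, yet the kernel $N$ restricted to a block need have no epimorphism onto $A_k$, so neither your inductive hypothesis nor Liebeck's theorem applies inside the blocks. This is the heart of the conjecture, and your sketch supplies no mechanism for it. Separately, your concern about ``only linear descent'' in the trivial-action case is unfounded --- once $G_x$ is $2$-closed on $\Omega\setminus\{x\}$ and maps onto $A_k$, the inductive hypothesis on $n-1$ points gives $n-1\ge C^k$ immediately, so that case closes cleanly --- and the assertion that the action on a block system remains $2$-closed is not true in general and would itself need justification.
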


\section*{Acknowledgments}
First and foremost, I would like to thank 
Wilfried Imrich, my friend of over half a century,
for bringing the Tucker conjecture to my attention
and also for his insistence that I attend
the BIRS/CMO workshop on 
``Symmetry Breaking in Discrete Structures.''

I wish to thank the BIRS-affiliated
\emph{Casa Matem\'atica Oaxaca (CMO)} for their hospitality
during the workshop, September 16--21, 2018.

During the workshop I had the opportunity
to learn about current research in this area,
including work on Tom Tucker's conjecture addressed in this paper.
It is a pleasure to acknowledge productive conversations
with Wilfried Imrich, Florian Lehner, Monika Pil\'sniak,
Tom Tucker, and other participants of the meeting.

My special thanks are due to my recent student John Wilmes
and my current student Bohdan Kivva, for all the insights
they shared with me over the years, some of which
turned out to be relevant to this paper.

I thank Saveliy Skresanov~\cite{skresanov}
for pointing out Choi's work~\cite{choi}
and his own GAP search
(see Prop.~\ref{prop:mathieu} and Remark~\ref{rem:mathieu}).

Last but not least, I'd like to pay tribute to Jan Saxl,
whose lifelong influence on my work started in 1979
when his article with Cheryl Praeger~\cite{praeger}
inspired my entry into the theory of primitive
permutation groups~\cite{annals}, and continued
until recently with work on our joint paper~\cite{bps}.
His imprint is discernible throughout this article.

\medskip
The research presented in this paper was supported in part
by NSF Grant CCF-1718902.  The views expressed in the paper 
are solely the author's and have not been evaluated or
endorsed by the NSF.

\end{document}